\DeclareMathOperator*{\vspan}{\text{span}}
\DeclareMathOperator*{\rank}{\text{rank}}
\let\@fnsymbol\@arabic
\newtheorem{theo}{Theorem}[section]
\newtheorem{defi}[theo]{Definition}
\newtheorem{kor}[theo]{Corollary}
\newtheorem{prop}[theo]{Proposition}
\newtheorem{lem}[theo]{Lemma}
\theoremstyle{definition}
\newtheorem{ass}{Assumption}
\newtheorem{eks}[theo]{Example}
\title{Existence of a Unique Quasi-stationary Distribution for Stochastic Reaction Networks}
\author{Mads Christian Hansen\footnote{Dept. of Math. Sciences, Univ. of Copenhagen, Universitetsparken 5, 2100 Copenhagen, Denmark.}\,\,$^,$\footnote{Corresponding author: mads@math.ku.dk} \and Carsten Wiuf$^1$}
\begin{document}

\maketitle

\begin{abstract}
  In the setting of stochastic dynamical systems that eventually go extinct, the quasi-stationary distributions are useful to understand the long-term behavior of a system before evanescence. For a broad class of applicable continuous-time Markov processes on countably infinite state spaces, known as reaction networks, we introduce the inferred
notion of absorbing and endorsed sets, and obtain sufficient conditions for the existence and uniqueness of a quasi-stationary distribution within each such endorsed set. In particular, we obtain sufficient conditions for the existence of a globally attracting quasi-stationary distribution in the space of probability measures on the set of endorsed states. Furthermore, under these conditions, the convergence from any initial distribution to the quasi-stationary distribution is exponential in the total variation norm.
\end{abstract}

\section{Introduction}
We may think of reaction networks in generality as a natural framework for representing systems of transformational interactions of entities \cite{systems}. The set of entities (species) may in principle be of any nature, and specifying not just which ones interact (stoichiometry and reactions) but also quantifying how frequent they interact (kinetics), we obtain the dynamical system of a reaction network. Examples abound in biochemistry, where the language originated, however the true power of this approach is the ability to model diverse processes such as found in biological \cite{stocoli,bio2}, medical \cite{cancer}, social  \cite{political}, computational \cite{comps}, economical \cite{game}, ecological \cite{pak} or epidemiological \cite{nasal} contexts.

\medskip
Whether the universe is inherently deterministic or stochastic in nature, the lack of complete information in complex systems inevitably introduces some degree of stochasticity. Thus, a stochastic description is not an alternative to the deterministic approach, but a more complete one \cite{nonstoch}. Indeed, the deterministic model solution is an approximation of the solution for the stochastic model, improving with the system size, and in general only remaining valid on finite time intervals \cite{kurtz}. Thus, the long-term behavior of a given reaction network may depend crucially on whether it is modeled deterministically or stochastically \cite{scale}. In particular, the possibility of extinction, which is a widely occurring phenomenon in nature, may sometimes only be captured by the latter \cite{extinction}. As a consequence, the counterpart to a stable stationary solution in the deterministically modeled system is not generally a stationary distribution of the corresponding stochastic model. Instead, a so-called quasi-stationary distribution, which is a stationary measure when conditioned on the process not going extinct, has shown to be the natural object of study. A concise overview of the history and current state of this field can be found in \cite{JOR}, while \cite{biblio} contains a comprehensive bibliography on quasi-stationary distributions and related work.
\\\\
From a modeling standpoint, when the copy-numbers of interacting entities are low and reaction rates are slow, it is important to recognize that the individual reaction steps occur discretely and are separated by time intervals of random length \cite{stocoli}. This is for example the case at the cellular level \cite{stogene}, where stochastic effects resulting from these small numbers may be physiologically significant \cite{comps}. Furthermore, stochastic variations inherent to the system may in general be beneficial for identifying system parameters \cite{listnoise}. The quasi-stationary distribution possesses several desirable properties in this domain. Most importantly, if the system under study has been running for a long time, and if the only available knowledge about the system is that it has not reached extinction, then we can conclude that the quasi-stationary distribution, if it exists and is unique, is the likely distribution of the state variable \cite{nasal}.

\medskip
Consider a right-continuous time-homogenous Markov process $(X_t\colon t\geq 0)$ \cite{RW}, that evolves in a domain $D\subseteq \mathbb{R}^d$, wherein there is a set of absorbing states, a ``trap", $A\subset D$. The process is absorbed, also referred to as being killed, when it hits the set of absorbing states, implying $X_t\in A$ for all $t\geq \tau_A$, where $\tau_A=\inf\{t\geq0: X_t\in A\}$ is the hitting time of $A$. As we are interested in the process before reaching $A$, there is no loss of generality in assuming $X_t=X_{t\wedge \tau_{A}}$. We refer to the complement,
\begin{align*}
E:=D\backslash A,
\end{align*}
as the set of endorsed states. For any probability distribution, $\mu$, on $E$, we let $\mathbb{P}_\mu$ and $\mathbb{E}_\mu$ be the probability and expectation respectively, associated with the process $(X_t\colon t\geq 0)$, initially distributed with respect to $\mu$. For any $x\in E$, we let $\mathbb{P}_x=\mathbb{P}_{\delta_{x}}$ and $\mathbb{E}_x=\mathbb{E}_{\delta_x}$. Under suitable conditions, the process hits the absorbing set almost surely (a.s.), that is $\mathbb{P}_x(\tau_{A}<\infty)=1$ for all $x\in E$, and we investigate the behavior of the process before being absorbed \cite{QSD}.

\begin{defi}
A probability measure $\nu$ on $E$ is called a quasi-stationary distribution (QSD) for the process $(X_t\colon t\geq 0)$ absorbed at $A$, if for every measurable set $B\subseteq E$
\begin{align*}
\mathbb{P}_\nu(X_t\in B\,|\, t<\tau_A)=\nu(B), \qquad t\geq 0,
\end{align*}
or equivalently, if there exists a probability measure $\mu$ on $E$ such that
\begin{align*}
\lim_{t\to \infty}\mathbb{P}_\mu(X_t\in B\,|\, t<\tau_A)=\nu(B),
\end{align*}
in which case we also say that $\nu$ is a quasi-limiting distribution.
\end{defi}

We refer to \cite{pop} for a proof of the equivalence of quasi-limiting and quasi-stationary distributions. Existence and uniqueness of a QSD on a finite state space is well known \cite[Chapter 3]{QSD}, and it is given by the normalized left Perron-Frobenius eigenvector of the transition rates matrix restricted to $E$. For the infinite dimensional case, most work has been carried out for birth-death processes in one dimension \cite{JOR}, where classification results yielding information about the set of QSDs exist \cite{VD}.

\medskip
In the present paper, we will focus on a special case of multidimensional processes on countable infinite state spaces which can be viewed as reaction networks. We will prove as the main result in Theorem \ref{res} and Corollary \ref{koret} sufficient conditions for the existence of a unique globally attracting QSD in the space of probability distributions on $E$, equipped with the total variation norm, $\|\cdot\|_{TV}$. Recall that this norm may be defined as \cite{totalvar}
\begin{align*}
\|\mu\|_{TV}=2\sup_{B\subseteq E}|\mu(B)|.
\end{align*}
Thus, informally, the metric associated to this norm is the largest possible difference between the probabilities that two probability distributions can assign to the same event. Our result is based on the following recent result \cite[Theorem 2.1]{Expo}.

\begin{theo}
The following are equivalent
\begin{itemize}
  \item There exists a probability measure $\nu$ on $E$ and two constants $C,\gamma>0$ such that, for all initial distributions $\mu$ on $E$,
  \begin{align*}
\|\mathbb{P}_\mu(X_t\in \cdot \,|\, t<\tau_A)-\nu(\cdot)\|_{TV}\leq Ce^{-\gamma t}, \qquad \forall t\geq 0.
\end{align*}
  \item There exists a probability measure $\nu$ on $E$ such that
  \begin{enumerate}
  \item[(A1)] there exists $t_0, c_1>0$ such that for all $x\in E$,
  \begin{align*}
\mathbb{P}_x(X_{t_0}\in \cdot\,|\, t_0<\tau_A)\geq c_1 \nu(\cdot),
\end{align*}
  \item[(A2)] there exists $c_2>0$ such that for all $x\in E$ and $t\geq 0$,
  \begin{align*}
\mathbb{P}_\nu(t<\tau_A)\geq c_2 \mathbb{P}_x(t<\tau_A).
\end{align*}
\end{enumerate}
\end{itemize}
\end{theo}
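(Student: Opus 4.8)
The plan is to prove the two implications separately, putting almost all the effort into $(A1)\wedge(A2)\Rightarrow$ exponential convergence, which is the direction used later. Throughout I would write $\Phi_t(\mu)(\cdot)=\mathbb{P}_\mu(X_t\in\cdot\mid t<\tau_A)$ for the conditional flow and introduce the sub-Markovian kernel $P_t(x,\cdot)=\mathbb{P}_x(X_t\in\cdot,\,t<\tau_A)$, so that $\mu P_t(\cdot)=\int P_t(x,\cdot)\,\mu(dx)$, $\mu P_t(E)=\mathbb{P}_\mu(t<\tau_A)$, and $\Phi_t(\mu)=\mu P_t/\mu P_t(E)$. The Markov property makes $(\Phi_t)_{t\ge0}$ a \emph{nonlinear} semigroup, $\Phi_{t+s}=\Phi_t\circ\Phi_s$, which will let me pass from integer multiples of a fixed $t_0$ to arbitrary times. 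First I would integrate the hypotheses against an arbitrary initial law $\eta$: since (A1) reads $P_{t_0}(x,\cdot)\ge c_1\mathbb{P}_x(t_0<\tau_A)\,\nu(\cdot)$, integration gives the Doeblin-type minorization $\Phi_{t_0}(\eta)\ge c_1\nu$ for \emph{every} probability measure $\eta$, while integrating (A2) yields $\mathbb{P}_\nu(t_0<\tau_A)\ge c_2\,\mathbb{P}_\eta(t_0<\tau_A)$, so that $\nu$ survives essentially as well as any initial law.

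The core is a one-step contraction: for all probability measures $\mu_1,\mu_2$ on $E$,
\begin{align*}
\|\Phi_{t_0}(\mu_1)-\Phi_{t_0}(\mu_2)\|_{TV}\le(1-c_1c_2)\|\mu_1-\mu_2\|_{TV}.
\end{align*}
To prove it I would set $\delta=\mu_1-\mu_2$, decompose $\delta=\delta_+-\delta_-$ into Jordan parts of common mass $\beta=\tfrac12\|\mu_1-\mu_2\|_{TV}$, and start from the identity
\begin{align*}
\mathbb{P}_{\mu_1}(t_0<\tau_A)\big(\Phi_{t_0}(\mu_1)-\Phi_{t_0}(\mu_2)\big)=\delta P_{t_0}-\big(\delta P_{t_0}(E)\big)\,\Phi_{t_0}(\mu_2),
\end{align*}
which isolates the effect of the state-dependent normalization. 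Subtracting the minorizing mass $c_1\,\delta_\pm P_{t_0}(E)\,\nu\le\delta_\pm P_{t_0}$ from each part and estimating the remainders in total variation, the numerator is governed by $\max\big(\delta_+P_{t_0}(E),\delta_-P_{t_0}(E)\big)$, which the integrated (A2) bounds by $\beta\,\mathbb{P}_\nu(t_0<\tau_A)/c_2$; the denominator $\mathbb{P}_{\mu_1}(t_0<\tau_A)$ is bounded below using that any law dominating $c_1\nu$ — as every image $\Phi_{t_0}(\mu)$ does, by integrated (A1) — has survival probability at least $c_1\,\mathbb{P}_\nu(t_0<\tau_A)$. Combining the two estimates is exactly where both constants enter and where the factor $1-c_1c_2$ appears.

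I expect this last step to be the main obstacle. Because the normalization $\mathbb{P}_\mu(t<\tau_A)$ depends on the initial law, $\Phi_t$ is genuinely nonlinear, so the classical linear Doeblin/Harris contraction does not apply verbatim; indeed, without (A2) the conditioning can let a fast-dying common part be washed out while a small initial discrepancy is amplified, so that $\Phi_{t_0}$ \emph{expands} total variation. Condition (A2), ensuring the minorizing measure $\nu$ survives essentially as well as any state, is precisely what forbids this scenario and rescues the contraction; arranging the bookkeeping of the positive and negative parts tightly enough to produce the clean factor $1-c_1c_2$, rather than a lossy constant, is the delicate point.

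With the one-step contraction the remainder is routine: iterating gives $\|\Phi_{nt_0}(\mu_1)-\Phi_{nt_0}(\mu_2)\|_{TV}\le 2(1-c_1c_2)^n$, so $(\Phi_{nt_0}(\mu))_n$ is Cauchy uniformly in $\mu$, and completeness of the probability measures under $\|\cdot\|_{TV}$ gives a unique fixed point $\nu_\ast$ of $\Phi_{t_0}$ attracting every $\Phi_{nt_0}(\mu)$ geometrically. By the semigroup property $\Phi_s(\nu_\ast)$ is again fixed by $\Phi_{t_0}$, hence equals $\nu_\ast$, so $\nu_\ast$ is a genuine QSD, identified with $\nu$; writing $t=nt_0+s$ and using $\Phi_t(\mu)=\Phi_{nt_0}(\Phi_s(\mu))$ with $\Phi_{nt_0}(\nu)=\nu$ promotes the decay to $\|\Phi_t(\mu)-\nu\|_{TV}\le Ce^{-\gamma t}$ with $\gamma=-t_0^{-1}\log(1-c_1c_2)$. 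For the shorter converse, the limit $\nu$ is automatically a QSD, so $\mathbb{P}_\nu(t<\tau_A)=e^{-\lambda t}$; then (A2) follows by bounding $\mathbb{P}_x(t<\tau_A)/\mathbb{P}_\nu(t<\tau_A)$ via the splitting $\mathbb{P}_x(t<\tau_A)=\mathbb{P}_x(t_1<\tau_A)\,\mathbb{P}_{\Phi_{t_1}(\delta_x)}(t-t_1<\tau_A)$ and the uniform closeness of $\Phi_{t_1}(\delta_x)$ to $\nu$ for large $t_1$, while (A1) follows for a large fixed $t_0$ from the same uniform closeness upgraded to the lower bound $\Phi_{t_0}(\delta_x)\ge c_1\nu$.
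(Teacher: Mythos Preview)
The paper does not supply its own proof of this statement: it is quoted verbatim as a known result from Champagnat and Villemonais (reference \cite{Expo}, Theorem 2.1), and is used as a black box on which the rest of the paper's machinery is built. So there is no proof in the paper to compare your proposal against.

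That said, your plan is essentially a faithful reconstruction of the Champagnat--Villemonais argument itself: (A1) provides a Doeblin-type minorization for the conditioned one-step kernel, (A2) tames the nonlinearity introduced by the state-dependent normalization $\mathbb{P}_\mu(t<\tau_A)$, and together they yield a strict TV contraction for $\Phi_{t_0}$ which iterates to geometric convergence; the converse is the easier direction and your sketch of it is correct. One point to tighten when you write it out: your lower bound on the denominator, $\mathbb{P}_{\mu_1}(t_0<\tau_A)\ge c_1\,\mathbb{P}_\nu(t_0<\tau_A)$, uses $\mu_1\ge c_1\nu$, which is guaranteed for laws in the image of $\Phi_{t_0}$ but not for arbitrary $\mu_1$. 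This is harmless for the conclusion --- one may start the iteration at $\Phi_{t_0}(\mu)$ at the price of an absolute constant in $C$ --- but either the contraction statement should be restricted to $\mu_i$ in the range of $\Phi_{t_0}$, or the denominator must be handled differently (as in the original proof, where the ratio structure is exploited more directly so that no separate lower bound on $\mathbb{P}_{\mu_1}(t_0<\tau_A)$ is needed).
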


Now, using Foster-Lyapunov theory \cite{mt, meyn}, a series of assumptions on the process $(X_t\colon t\geq 0)$ has been shown to be sufficient for (A1) and (A2) to hold \cite{cd}. This approach has been applied to a particular case of multidimensional birth-death processes, giving sufficient conditions, in terms of the parameters of the process, for the existence and uniqueness of a QSD. Here, we extend this result, not just to a larger set of parameter values in the birth-death process case, but to the \textit{much} broader class of stochastic processes known as stochastic reaction networks.

\medskip
The outset of the paper is as follows. In section 2, we introduce the setup and notation of reaction network theory, and define the central inferred notions of endorsed and absorbing states for this class of processes. Section 3 contains the terminology and main assumptions that we shall use throughout the paper. We then move on in section 4, to prove that the processes associated with stochastic reaction networks do indeed satisfy all the required assumptions made by \cite[Corollary 2.8]{cd}. Section 5 contains the main result, Theorem \ref{res}. Finally, we give some examples in section 6, illustrating the applicability of the results.

\section{Reaction Network Setup}

Denote the real numbers by $\mathbb{R}$, the integers by $\mathbb{Z}$, the natural numbers by $\mathbb{N}=\{1,2,\dots\}$ and the nonnegative integers by $\mathbb{N}_0=\{0,1,2,\dots\}$. Further, for any set, $B$, let $|B|$ denote its cardinality and denote by $\mathbbm{1}_B\colon D\to \{0,1\}$ the indicator function of a subset $B\subseteq D$.

\medskip
A reaction network is a triple $\mathcal{N}=(\mathcal{S}, \mathcal{C}, \mathcal{R})$, where $\mathcal{S}$ is a  finite ordered set of species\footnote{The terminology ``species" is standard, although one may equally think of them as general entities or agents.}, $\mathcal{C}$ is a  finite set of complexes, consisting of linear combinations over $\mathbb{N}_0$ of the species, and  $\mathcal{R}\subset \mathcal{C}\times \mathcal{C}$ is an irreflexive relation on $\mathcal{C}$, referred to as the set of reactions \cite{stochreac,MF,crnt}. Furthermore, $\mathcal{R}$ is assumed to be ordered.

\medskip
We define the dimension of the reaction network, $d=|\mathcal{S}|$. Any species $S_i\in \mathcal{S}$ can be identified with the unit vector $e_i\in \mathbb{N}_0^{d}$, thus any complex $y\in \mathcal{C}$ can be identified with a vector in $\mathbb{N}_0^{d}$. It is customary to denote an element $(y_k,y_k')\in \mathcal{R}$ by $y_k\to y_k'\in \mathcal{R}$ in which case we refer to $y_k$ as the source complex and to $y_k'$ as the product complex of reaction $k$. We may thus write $\mathcal{R}=\{y_k\to y'_k\colon  k=1,\dots,r\}$.  Employing a standard, although slight abuse of, notation, we identify $\mathcal{S}=\{S_1,\dots,S_d\}$ with the set $\{1,\dots, d\}$ and $\mathcal{R}$ with $\{1,\dots,r\}$.
We write the $k$'th reaction with the notation
\begin{align*}
\sum_{i\in \mathcal{S}}y_{ki}S_i\to \sum_{i\in \mathcal{S}}y_{ki}'S_i,
\end{align*}
where $y_{ki}=(y_k)_i$ and $y'_{ki}=(y'_k)_i$ are the stoichiometric coefficients associated with the source and product complexes of reaction $k$, respectively. Define the reaction vectors $\xi_k=y'_k-y_k$ and the stoichiometric matrix
\begin{align*}
\Xi=(\xi_1\, \xi_2\, \dots\, \xi_r)\in \mathbb{N}_0^{d\times r}.
\end{align*}
The order of reaction $k$ is the sum of the stoichiometric coefficients of the source complex, $\sum_{i\in \mathcal{S}} y_{ki}$. Finally, we define the maximum of a vector over the set $\mathcal{R}$, $x=\max_{k\in\mathcal{R}}y_k$, as the entry-wise maximum, $x_i=\max_{k\in \mathcal{R}}y_{ki}$. 

\medskip
 A set of reactions $\mathcal R$ induces a set of complexes and a set of species, namely the complexes and species that appear in the reactions. We will assume that a reaction network is always given in this way by $\mathcal R$, and one may then completely describe a reaction network in terms of its reaction graph, whose nodes are the complexes and whose directed edges are  the  reactions. This concise description will be employed in the rest of the paper. 
To avoid trivialities, we assume $\mathcal R\not=\emptyset$.

\medskip
For each reaction we specify an intensity function $\lambda_k\colon\mathbb{N}_0^d\to[0,\infty)$, $k\in\mathcal R$, which satisfies the stoichiometric admissibility condition:
\begin{align*}
 \lambda_k(x) > 0 \quad \Leftrightarrow \quad x\geq  y_{k},
\end{align*}
\noindent
where we use the usual vector inequality notation; $x\geq y$ if $x_i\geq y_i$ for all $i\in\mathcal{S}$. Thus, reactions are only allowed to take place whenever the copy-numbers of each species in the current state is at least as great as those of the corresponding source complex. 
A widely used example is stochastic mass action kinetics given by
\begin{align*}
\lambda_k(x)=\alpha_k\prod_{i=1}^dy_{ki}!\binom{x}{y_k}=\alpha_k\prod_{i=1}^d\frac{x_i!}{(x_i-y_{ki})!},
\end{align*}
for some reaction rate constants $\alpha_k>0$ \cite{stochreac}. The idea is that the rate is proportional to the number of distinct subsets of the molecules present that can form the input of the reaction. It reflects the assumption that the system is well-stirred \cite{stochreac}. Other examples include power law kinetics or generalized mass action kinetics \cite{global,horn,generalized}. A particular choice of such rate functions constitute a stochastic kinetics $\lambda=(\lambda_1,\dots, \lambda_r)$ for the reaction network $\mathcal{N}$, and the pair $(\mathcal{N},\lambda)$ is referred to as a stochastic reaction system, or simply a reaction network with kinetics $\lambda$.

\medskip
We may then specify the stochastic process $(X_t\colon t\geq 0)$ on the state space $D:=\mathbb{N}_0^d$ related to the reaction system $(\mathcal{N},\lambda)$. Let $X_t$ be the vector in $\mathbb{N}_0^d$ whose entries are the species counts at time $t$. If reaction $y_k\to y'_k$ occurs at time $t$, then the new state is $X_t=X_{t-}+y'_k-y_k=X_{t-}+\xi_k$, where $X_{t-}$ denotes the previous state. The stochastic process then follows,
\begin{align}\label{poisson}
X_t=X_0+\sum_{k\in \mathcal{R}}Y_k\left(\int_0^t \lambda_k(X_s)\,ds\right)\xi_k,
\end{align}
where $Y_k$ are independent and identically distributed unit-rate Poisson processes \cite{stochreac,EK,markov}. This stochastic equation is referred to as a random time change representation.  We assume throughout the paper that the process is non-explosive, so that the process is well defined. Assumption \ref{ass1}, though, will imply non-explosiveness.

\subsection{The State Space}

To define the set of endorsed states and absorbing states in the setting of stochastic reaction networks, we recall some terminology from stochastic processes. We say that there is a path from $x$ to $y$, denoted $x\mapsto y$, if there exists $t\geq0$ such that $\mathbb{P}_x(X_t=y)>0$. We extend this notion to sets as follows; $B_1\mapsto B_2$ if there exists $x\in B_1$ and $y\in B_2$ such that $x\mapsto y$. Finally, we introduce the region of large copy numbers, where all reactions may take place, defined as
\begin{align*}
R=\{x\in \mathbb{N}_0^d\,|\, \lambda_k(x)>0 \,\forall\, k\in \mathcal{R}\}.
\end{align*}
Any network satisfies $R\neq \emptyset$. Indeed, by the stoichiometric compatibility condition, $\{x\in D\,|\, x\geq M\}\subseteq R$ where $M=\max_{k\in \mathcal{R}} y_k\in \mathbb{N}_0^d$.
Letting $D_E=\{x\in D\,|\, x\mapsto R\}$, we may decompose the state space into a disjoint union
\begin{align*}
D=D_E\sqcup D_A.
\end{align*}
A state space $D$ is irreducible if for all $x,y\in D$ we have $\mathbb{P}_x(X_{t_1}=y)>0$ and $\mathbb{P}_y(X_{t_2}=x)>0$ for some $t_1,t_2>0$ \cite{scale}. Thus, $D$ is irreducible if for all $x,y\in D$  there exists a path $x\mapsto y$. Irreducibility induces a class structure on the state space \cite{markov}, and we denote the classes by $\mathcal{I}_1,\mathcal{I}_2,\dots$ (potentially infinitely many). Let $\mathscr{I}$ denote the set of irreducible classes.  Obviously, either $\mathcal I_i\subseteq D_E$ or $\mathcal I_i\subseteq D_A$, $i\ge 1$.

\begin{lem}
The pair $(\mathscr{I}, \preceq)$, where $\preceq$ is given by
\begin{align*}
\mathcal{I}_j\preceq \mathcal{I}_i \Leftrightarrow \mathcal{I}_i\mapsto  \mathcal{I}_j,\qquad i,j\ge 1,
\end{align*}
is a well defined poset. The irreflexive kernel $(\mathscr{I},\prec)$ gives a well defined strict poset. 
\end{lem}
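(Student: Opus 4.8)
The plan is to reduce everything to a single elementary fact, namely that the path relation $\mapsto$ on \emph{states} is transitive, and to combine it with the observation that two states lying in a common irreducible class reach each other in both directions. First I would record transitivity of $\mapsto$ on states: if $x\mapsto y$ and $y\mapsto z$, choose $s,t\ge 0$ with $\mathbb{P}_x(X_s=y)>0$ and $\mathbb{P}_y(X_t=z)>0$, and apply Chapman--Kolmogorov, $\mathbb{P}_x(X_{s+t}=z)\ge \mathbb{P}_x(X_s=y)\,\mathbb{P}_y(X_t=z)>0$, to conclude $x\mapsto z$. I would also note that $x\mapsto x$ always holds, taking $t=0$ so that $\mathbb{P}_x(X_0=x)=1>0$.

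With these in hand, the well-definedness of $\preceq$ is the statement that the class-level relation $\mathcal{I}_i\mapsto\mathcal{I}_j$ is independent of the chosen representatives. I would argue that if some $x\in\mathcal{I}_i$ reaches some $y\in\mathcal{I}_j$, then in fact every $x'\in\mathcal{I}_i$ reaches every $y'\in\mathcal{I}_j$: irreducibility within each class gives $x'\mapsto x$ and $y\mapsto y'$, and transitivity then yields the chain $x'\mapsto x\mapsto y\mapsto y'$. Hence the existential definition of $\mapsto$ on sets coincides on irreducible classes with the corresponding universal statement, so $\preceq$ is a genuine relation on $\mathscr{I}$.

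Next I would verify the three order axioms. Reflexivity $\mathcal{I}_i\preceq\mathcal{I}_i$ follows at once from $x\mapsto x$. For transitivity, suppose $\mathcal{I}_k\preceq\mathcal{I}_j$ and $\mathcal{I}_j\preceq\mathcal{I}_i$, that is $\mathcal{I}_i\mapsto\mathcal{I}_j$ and $\mathcal{I}_j\mapsto\mathcal{I}_k$; picking representatives and using within-class communication to bridge the two $\mathcal{I}_j$-endpoints, path transitivity delivers $\mathcal{I}_i\mapsto\mathcal{I}_k$, i.e. $\mathcal{I}_k\preceq\mathcal{I}_i$. For antisymmetry, $\mathcal{I}_i\mapsto\mathcal{I}_j$ and $\mathcal{I}_j\mapsto\mathcal{I}_i$ yield states $x\in\mathcal{I}_i$ and $y\in\mathcal{I}_j$ with $x\mapsto y$ and $y\mapsto x$ (again bridging endpoints inside the classes); then $x$ and $y$ communicate, so they lie in the same irreducible class, forcing $\mathcal{I}_i=\mathcal{I}_j$. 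This establishes that $(\mathscr{I},\preceq)$ is a poset.

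Finally, the claim about $(\mathscr{I},\prec)$ is the standard fact that the irreflexive kernel of any partial order, here $\mathcal{I}_j\prec\mathcal{I}_i\Leftrightarrow\mathcal{I}_j\preceq\mathcal{I}_i$ and $\mathcal{I}_j\neq\mathcal{I}_i$, is a strict partial order: irreflexivity holds by construction, transitivity is inherited from that of $\preceq$ together with antisymmetry, and asymmetry follows from these two. I do not expect a genuine obstacle anywhere; the only point requiring real care is the representative-independence step, since $\mapsto$ on sets is defined existentially, and it is precisely within-class communication that upgrades it to the universal form needed to make transitivity and antisymmetry go through.
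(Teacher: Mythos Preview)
Your proof is correct and follows essentially the same approach as the paper: both arguments use within-class irreducibility to bridge endpoints when verifying transitivity and antisymmetry, and both rely (implicitly or explicitly) on concatenation of paths via the Markov property. You are somewhat more explicit than the paper in isolating state-level transitivity via Chapman--Kolmogorov, in observing that the existential definition of $\mapsto$ on irreducible classes upgrades to a universal one, and in spelling out why the irreflexive kernel is a strict order; the paper omits these routine points but its argument is otherwise identical.
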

\begin{proof}
Since all elements $\mathcal{I}\in \mathscr{I}$ are irreducible, there exists a path between any two points in $\mathcal{I}$ hence $\mathcal{I}\preceq \mathcal{I}$ yielding the relation reflexive.

Suppose $\mathcal{I}_i\preceq \mathcal{I}_j$ and $\mathcal{I}_j\preceq \mathcal{I}_i$ for some $i,j\ge 1$. Let $x\in \mathcal{I}_i$ and $y\in \mathcal{I}_j$ be given. By assumption, we may find a path from $x$ to some $z_2\in \mathcal{I}_j$, and by irreducibility of $\mathcal{I}_j$ there is a path from $z_2$ to $y$. Similarly, we may by assumption find a path from $y$ to some $z_1\in \mathcal{I}_i$ and by irreducibility of $\mathcal{I}_i$ a path from $z_1$ to $x$. As $x,y$ were arbitrary, we conclude that there exists a path between any two points in $\mathcal{I}_i\cup \mathcal{I}_j$ hence $\mathcal{I}_i=\mathcal{I}_j$, yielding the relation antisymmetric.

Finally, suppose $\mathcal{I}_k\preceq \mathcal{I}_j$ and $\mathcal{I}_j\preceq \mathcal{I}_i$ for some $i,j,k\ge 1$. Then there exists a path from some $x\in \mathcal{I}_i$ to some $z_1\in \mathcal{I}_j$ and a path from some $z_2\in \mathcal{I}_j$ to some $y\in \mathcal{I}_k$. By irreducibility of $\mathcal{I}_j$ there is a path from $z_1$ to $z_2$, and concatenation of the three paths yield one from $x$ to $y$. We conclude that $\mathcal{I}_k\preceq \mathcal{I}_i$, hence the relation is transitive.
\end{proof}

A similar ordering has been considered in \cite{DP}. However, their further analysis rests on the setting of discrete time, rendering the approach insufficient for stochastic reaction networks. To exploit the graphical structure induced by $\preceq$, define the marked directed acyclic graph $\mathcal{D}=(\mathscr{I}, \mathscr{E})$ as follows. The set of directed edges is
\begin{align*}
\mathscr{E}=\{(\mathcal{I}_i,\mathcal{I}_j)\in \mathscr{I}^2\,|\, \mathcal{I}_j\prec \mathcal{I}_i, i,j\ge 1\},
\end{align*}
while the marking $\mathscr{I}=\mathscr{I}_A\sqcup \mathscr{I}_E$ is given by
\begin{align*}
\mathscr{I}_E=\{\mathcal{I}\in \mathscr{I}\,|\, \mathcal{I} \subseteq D_E\}, \qquad \mathscr{I}_A=\{\mathcal{I}\in \mathscr{I}\,|\, \mathcal{I} \subseteq D_A\}.
\end{align*}
Let $\mathscr{V}_1,\mathscr{V}_2,\dots$ denote the vertex set of the respective connected components of the induced subgraph $\mathcal{D}[\mathscr{I}_E]$, the graph with vertex set $\mathscr{I}_E$ and edges from $\mathscr{E}$ with start and end nodes in $\mathscr{I}_E$.
\begin{defi}\label{statespace}
The endorsed sets and absorbing sets are defined, respectively, by \begin{align*}
E_n=\bigcup_{\mathcal{I}\in \mathscr{V}_n} \mathcal{I},
\qquad
A_n=\bigcup_{\substack{\mathcal{I}\in \mathscr{I}_A\colon E_n\mapsto \mathcal{I}}}\mathcal{I}, \qquad n\geq 1.
\end{align*}
The corresponding state space is defined by $D_n=E_n\sqcup A_n$.
\end{defi}

\vspace{-0.2cm}
\begin{figure*}[h!]
\begin{multicols*}{2}
\begin{displaymath}
    \xymatrix@C-=0.4cm@R-=0.4cm{& 2S_1\\
    S_1+2S_2 \ar[rd]\ar[ru]& \\
    & 3S_2}
\end{displaymath}
\columnbreak
\begin{center}
 \def\svgwidth{130pt}
  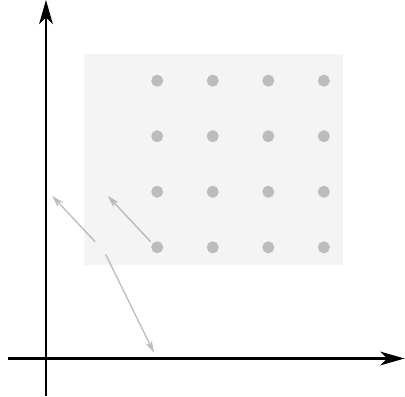
\end{center}
\end{multicols*}
\vspace{-0.55cm}
\caption{Left: The reaction graph of a stochastic reaction network. Right: The state space with the region $R$ in shaded grey. Points in $D_A$ are marked red.}
\label{counter}
\end{figure*}
\newpage
As $D_E$ is non-empty, the existence of at least one endorsed set is guaranteed. By construction, the endorsed sets are disjoint, their union is $D_E$ and their number, $N_E$, may in general be countable infinite. Furthermore, any absorbing set is confined to a subset of $\{x\in \mathbb{N}_0^d\,|\, x\ngeq M\}$,  lying ``close'' to the boundary of $\mathbb{N}_0^d$. If extinction is possible from an endorsed class $E_n$ then this absorption will take place in $A_n$. Note, however, that the set $\mathscr{I}_A$ may in general be empty, in which case no absorbing set exist. To further illuminate the structure of the endorsed sets, we provide the following classification result.
\begin{prop}\label{state}
For $M\in \mathbb{N}^d$ sufficiently large, the set $\{x\in D_E\,|\,x\geq M\}$ intersects
\begin{enumerate}
 \item[(i)] finitely many endorsed sets if and only if $\rank\,\Xi=d$.
  \item[(ii)] a single endorsed set if and only if $\vspan_{\mathbb{Z}}\Xi=\mathbb{Z}^d$.
\end{enumerate}
\end{prop}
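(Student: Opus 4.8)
The plan is to translate the combinatorics of endorsed sets in the deep interior into the arithmetic of the sublattice $L := \vspan_{\Z}\Xi \subseteq \Z^d$ generated by the reaction vectors, and to count the number of endorsed sets meeting $\{x \geq M\}$ by counting cosets of $L$. Throughout I fix $M^* = \max_{k} y_k$, so that $\{x \geq M^*\} \subseteq R \subseteq D_E$; thus for any threshold $\geq M^*$ every such state is endorsed and the region $\{x \geq M\}$ is partitioned among the endorsed sets $E_n$. Writing $N(M)$ for the number of endorsed sets meeting $\{x \geq M\}$ and $c(M)$ for the number of cosets of $L$ meeting $\{x \geq M\}$, both claims follow from a lower bound $N(M) \geq c(M)$, valid always, and an upper bound $N(M)=c(M)$, valid once $L$ contains a strictly positive vector.

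First (coset invariance) I would note that every reaction changes the state by some $\xi_k \in L$, so any path, and hence comparability in $\mathcal{D}$, preserves the coset of $L$; consequently each connected component of $\mathcal{D}[\mathscr{I}_E]$, i.e.\ each endorsed set, lies in a single coset of $L$. This yields a well-defined map from endorsed sets meeting $\{x \geq M\}$ to cosets of $L$ meeting $\{x \geq M\}$, which is surjective since any such coset contains a point $\geq M$ lying in $R \subseteq D_E$, hence in some endorsed set. Thus $N(M) \geq c(M)$, and distinct cosets force distinct endorsed sets. The coset count is now pure lattice theory: if $\rank \Xi < d$ then some $e_i$ has infinite order in $\Z^d/L$, so $M, M+e_i, M+2e_i,\dots$ meet infinitely many cosets and $N(M)=\infty$; and if $\vspan_{\Z}\Xi \ne \Z^d$ then some $e_i \notin L$, so $M$ and $M+e_i$ lie in different cosets and hence different endorsed sets, giving $N(M)\geq 2$. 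These settle the ``only if'' directions of (i) and (ii).

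For the reverse implications I need the upper bound $N(M)\le c(M)$, i.e.\ that the map is injective: all states $\geq M$ in one coset of $L$ share an endorsed set. This is the crux and the main obstacle, since the naive route — realize $y-x$ as a signed sum of reaction vectors and walk from $x$ to $y$ — may dip out of $D_E$, with a dip whose depth grows with $\|y-x\|$, so that no fixed $M$ controls it. The device I would use is a strictly positive lattice vector $w \in L$, $w>0$, which exists in exactly the two relevant cases: $w=\mathbf{1}$ when $L=\Z^d$, and $w=m\mathbf{1}$ with $m=[\Z^d:L]$ (so $m\Z^d\subseteq L$) when $\rank\Xi=d$. Fixing a representation $w=\sum_k b_k\xi_k$, a single ``macro-move'' by $\pm w$ can be realized as a sequence of single-reaction forward steps and reverse steps whose excursion below the current state is bounded by a constant $\Delta_w$ depending only on $w$; from any state $\geq(M^*+\Delta_w)\mathbf{1}$ such a macro-move stays in $R\subseteq D_E$ and corresponds to legitimate edges (or equalities) of $\mathcal{D}[\mathscr{I}_E]$, using that a reverse step $z\mapsto z-\xi_k$ is an admissible connection once $z\geq y_k'$, which holds deep in the interior.

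Given $x,y\geq M$ in the same coset, with $M\geq(M^*+\Delta_w)\mathbf{1}$, I would connect them by a lift–translate–lower scheme: apply $+w$ macro-moves $t$ times to reach the very deep state $x+tw$; apply the $\pm\xi_k$-path realizing $y-x$ (of some dip $\delta$) based at $x+tw$ to reach $y+tw$, choosing $t$ large enough that $x+tw-\delta\mathbf{1}\geq M^*\mathbf{1}$ so this path stays in $R$; and finally apply $-w$ macro-moves $t$ times down to $y$, staying above $M^*$ since $y\geq M$. As $t$ may depend on $x,y$ while the threshold $M^*+\Delta_w$ does not, this gives $N(M)=c(M)$ for all large $M$. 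Part (i) then follows because $\rank\Xi=d$ makes $L$ full rank of finite index $m$ with every coset meeting the region (shift by $m\mathbf{1}$), so $N(M)=m<\infty$; and part (ii) follows because $\vspan_{\Z}\Xi=\Z^d$ makes $L=\Z^d$ a single coset, so $N(M)=1$.
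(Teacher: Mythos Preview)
Your argument is correct and follows essentially the same route as the paper: both reduce the question to counting cosets of the lattice $L=\vspan_{\Z}\Xi$ and both exploit a strictly positive vector $w\in L$ (the paper produces it as $\sum_k a_k\xi_k>0$, you take $m\mathbf{1}$ with $m=[\Z^d:L]$) together with a fixed representation whose partial sums have bounded dip, so that undirected walks stay in $R$ for $M$ large. The paper then identifies $N_E(M)$ with the number of integer points in a fundamental parallelotope of $L$, namely $|\det B|$.

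The one place where your write-up is more explicit is the injectivity of the map \{endorsed sets meeting $\{x\ge M\}$\} $\to$ \{cosets of $L$\}: the paper only exhibits the walk from $x$ to $x+w$ and then passes directly to the coset count, whereas your lift--translate--lower scheme spells out why any two points $x,y\ge M$ in the same coset can actually be joined by an undirected walk in $R$ even though the dip of a direct walk may grow with $\|y-x\|$. This is a genuine completion of the paper's argument rather than a different method; what it buys you is a fixed threshold $M$ independent of the pair $(x,y)$, which the naive direct walk does not provide.
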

\begin{proof}
Suppose first that $\rank\, \Xi<d$. Let $x\in D_E$. Then $x\in E_1$, say and there exists a $y\in D_E$ such that $y\notin (x+\vspan_{\mathbb{R}}\Xi)$. In particular, $y\in E_2$ where $E_1\neq E_2$. This procedure can be repeated indefinitely, yielding infinitely many endorsed sets.

Now, suppose $\rank\,\Xi=d$. Then one may choose a linear combination of the reaction vectors yielding a strictly positive point,
\begin{align*}
\sum_{k\in \mathcal{R}}a_k\xi_k>0, \qquad a_k\in \mathbb{Z}.
\end{align*}
Let $a=\sum_{k\in \mathcal{R}}|a_k|$ and $a_0=0$. Define the sequence $(w_\ell)_{\ell=1,\dots,a}$ by
\begin{align*}
w_\ell=\xi_k, \qquad \ell=1+\sum_{j=0}^{k-1}|a_j|,\dots, \sum_{j=1}^{k}|a_{j}|, \qquad k\in \mathcal{R}.
\end{align*}
As the reaction vectors are finite, the partial sums $P_{j}=\sum_{\ell=1}^jw_\ell$ are finite for each $j\leq a$. Let
\begin{align*}
m=\min_{i\in \mathcal{S}, j\leq a}(P_j)_i.
\end{align*}
Choosing each coordinate $M_i>|m|+\max_{k\in \mathcal{R}}y_k$ for each $i\in \mathcal{S}$, it follows that any point $x\in D_E$ with $x\geq M$ satisfies $x+P_j\in R$ for any $j\leq a$. We say that a sequence of states $(x_1,\dots,x_n)$ is an undirected walk from $x_1$ to $x_n$ if for all $1\leq i\leq n-1$ there exists $k(i)\in \mathcal{R}$ such that $x_{i+1}=x_{i}\pm \xi_{k(i)}$. As all reactions may occur in $R$, we conclude that $x$ has an undirected walk to the point
\begin{align*}
x':=x+P_a=x+\sum_{k\in \mathcal{R}} a_k \xi_k>x.
\end{align*}
Thus, by definition, $x$ and $x'$ belong to the same endorsed set, say $E_1$. To determine the number of endorsed sets, let $B=(b_i)$ denote the basis matrix of the free $\mathbb{Z}$-module generated by the stoichiometric matrix $\Xi$, and define the lattice generated by $\Xi$ to be
\begin{align*}
\mathcal{L}(\Xi)=\left\{\sum_{i=1}^{\rank \Xi}z_ib_i\colon z_i\in \mathbb{Z}\right\}.
\end{align*}
By the construction in the previous paragraph, it follows that all points in the region $\{y\in E_1\colon y\geq M\}$ belong to the same translated lattice $x+\mathcal{L}(\Xi)$. By assumption, $\vspan_{\mathbb{R}}\Xi =\mathbb{R}^d$ hence the lattice $\mathcal{L}(\Xi)$ has rank $d$. The number of ways one may translate a rank $d$ lattice in $\mathbb{R}^d$ to an integer lattice point without any points intersecting is given by considering the number of integer lattice points inside the fundamental parallelotope,
\begin{align*}\textstyle
\mathcal{P}(\Xi)=\left\{\sum_{i=1}^d\theta_ib_i \,|\, \theta_i\in [0,1), b_i\in \vspan_{\mathbb{Z}}\Xi, \det(B)\neq 0 \right\}.
\end{align*}
Indeed, as $\mathcal{P}(\Xi)$ tiles $\mathbb{R}^d$, that is for any point $z\in \mathbb{R}^d$ there exists a unique $z'\in \mathcal{L}(\Xi)$ such that $z\in z'+\mathcal{P}(\Xi)$, the problem is reduced to a single fundamental parallelotope, which by definition contains exactly one point from each translated lattice \cite{lattice}. Further, the number of integer lattice points inside $\mathcal{P}(\Xi)$ is exactly equal to the volume of the parallelotope \cite[p. 97]{para}, hence, by finiteness of the reaction vectors,
\begin{align*}
N_E(M)=|\det(B)|<\infty,
\end{align*}
for $M$ sufficiently large, where $N_E(M)$ is the number of endorsed sets intersecting $\{x\in D_E\,|\, x\geq M\}$. This proves (i) of the proposition.

 Finally, if $\vspan_{\mathbb{Z}}\Xi=\mathbb{Z}^d$ then $\mathcal{L}(\Xi)=\mathbb{Z}^d$ and the unit vectors $e_1,\dots, e_d\in \mathcal{L}(\Xi)$. As $\mathcal{P}(I_d)\cap \mathcal{L}(\Xi)=\{0\}$ we conclude from \cite{lattice} that $e_1,\dots, e_d$ is a basis for $\mathcal{L}(\Xi)$ hence $N_E(M)=1$ as desired.
\end{proof}

Note that, in particular, a reaction network whose associated stochastic process is a birth-death process, that is, a process where for each $i=1,\dots, d$ either $e_i\in \mathcal{R}$ or $-e_i\in \mathcal{R}$, have a single endorsed set for $x$ sufficiently large. In practice, one may find the endorsed sets by picking $x\in R$ and adding states by a backtracking algorithm \cite{craciun}. Verification of $\vspan_{\mathbb{Z}}\Xi=\mathbb{Z}^d$ can be done by calculation of the Hermite normal form \cite{craciun}.

\medskip
One may suspect that Proposition \ref{state} could be strengthened to hold on the entire set $D_E$. This is only partially true. Consider as an example the three-dimensional reaction network given by the reaction graph
\begin{displaymath}
    \xymatrix@C-=0.4cm@R-=0.4cm{ & 3S_1\\
    S_1+S_2+S_3 \ar[ru] \ar[r]\ar[rd]& 2S_1\\
    & 2S_3}
    \vspace{0.3cm}
\end{displaymath}
\noindent
It follows that $\rank{\Xi}=3$. However, $D_E=\mathbb{N}^3$, $D_A=\mathbb N_0^3\setminus \mathbb N^3$, and each singleton $\{(1,m,1)\}$, $m\geq 1$, constitutes its own endorsed set. Thus, in the generic picture, close to the absorbing set, there may be infinitely many endorsed sets. We do, however, have the following corollary.
\begin{kor}\label{dim2}
If $d\leq 2$, there are finitely many endorsed sets if and only if $\rank\,\Xi=d$.
\end{kor}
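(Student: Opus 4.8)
The plan is to prove the substantive implication, namely that $\rank\,\Xi=d\Rightarrow$ there are finitely many endorsed sets when $d\le 2$; the converse is already contained in the first paragraph of the proof of Proposition \ref{state}, which shows for every $d$, with no largeness restriction, that $\rank\,\Xi<d$ forces infinitely many endorsed sets (an endorsed set is confined to a single coset of $\vspan_{\mathbb{R}}\Xi$ since reactions move only along reaction vectors, while $D_E\supseteq R$ is not contained in any proper affine subspace). The case $d=1$ needs no separate work: irreflexivity of $\mathcal{R}$ gives $\xi_k\neq 0$, so $\rank\,\Xi=1$ always, and the claim is subsumed by the $d=2$ reasoning below.

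For the main case set $M_0=\max_{k\in\mathcal{R}}y_k$, so that $R=\{x\in\mathbb{N}_0^d : x\ge M_0\}$, and fix $M$ as in Proposition \ref{state}(i), so that the bulk $\{x\ge M\}$ meets only finitely many endorsed sets. I would first record two reductions. (a) Every endorsed set meets $R$: if $x\in D_E$ then $x\mapsto R$ along a directed path all of whose states are endorsed (each still reaches $R$), and a single reaction step $p\to p+\xi_k$ between endorsed states means their classes are equal or joined by an edge of $\mathcal{D}[\mathscr{I}_E]$; hence the whole path, including its endpoint in $R$, lies in one endorsed set. (b) An undirected walk $x_{i+1}=x_i\pm\xi_{k(i)}$ all of whose states lie in $R$ keeps all consecutive states in the same endorsed set, since each step is a forward or reverse reaction between two endorsed states. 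By (a) it now suffices to show that $R$ meets only finitely many endorsed sets.

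The heart of the argument, and the only place $d\le 2$ is used, is the claim that for $d=2$ every point of the quadrant $R=\{x_1\ge M_{0,1},\,x_2\ge M_{0,2}\}$ lying outside a bounded neighborhood of the corner $M_0$ is connected, by an undirected walk inside $R$, to the bulk $\{x\ge M\}$. Using the strictly positive combination $v=\sum_k a_k\xi_k>0$ and the quantities $P_j$, $m$ from the proof of Proposition \ref{state}, any point sufficiently deep in $R$ can be pushed to $x+v$, and then iterated into the bulk, by a walk that stays in $R$. A point high on one wall, say $x_1=M_{0,1}$ with $x_2$ large, can be driven strictly into the interior in the first coordinate: since $\rank\,\Xi=2$ some reaction has $\xi_{k1}\neq 0$, and a forward step (if $\xi_{k1}>0$) or the reverse step $x\mapsto x-\xi_k$ (if $\xi_{k1}<0$, which is admissible because it raises $x_1$ while changing $x_2$ only by a bounded amount, and $x_2$ is large) stays in $R$ and reduces matters to the deep case. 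Combined with (b), this shows every such point shares its endorsed set with a bulk point; since the bulk meets finitely many endorsed sets and the corner neighborhood is finite, $R$ meets finitely many endorsed sets, completing the proof.

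The main obstacle is precisely this last geometric step, and it is where $d\le 2$ is indispensable: in $\mathbb{R}^2$ the only way to be ``cornered'' so that every $\pm\xi_k$ step exits $R$ is to sit in the bounded, codimension-$2$ corner region, whereas for $d\ge 3$ a point can be trapped along an \emph{unbounded} lower-dimensional face, yielding infinitely many singleton components, exactly the behaviour of the states $\{(1,m,1)\}$ in the preceding three-dimensional example. The remaining care is to quantify ``sufficiently deep'' and ``large'' in terms of $M_0$ and $\max_j\|P_j\|$, so that the bounded fluctuations incurred while moving off a wall never drop the large coordinate below $M_0$; this is routine bookkeeping that I would defer.
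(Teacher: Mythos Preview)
Your proposal is correct and follows essentially the same route as the paper: both arguments reduce to showing that all but finitely many points of $R$ (equivalently, of $D_E$) admit an undirected walk within $R$ to the bulk $\{x\ge M\}$, invoking $\rank\,\Xi=d$ to produce a reaction with nonzero component in each coordinate direction and then observing that in dimension at most two the only obstruction is a bounded corner. Your explicit reductions (a) and (b) and the walk-off-the-wall step make the logic somewhat more transparent than the paper's compressed phrase ``keeping the $j$th coordinate fixed and increasing the possible other if necessary'', but the underlying idea and structure are the same.
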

\begin{proof}
We only need to prove that if $\rank\Xi=d$ then there are finitely many endorsed sets. For this, it suffices to prove that at most finitely many $x\in D_E$ do not have an undirected walk (as introduced in the proof of Proposition \ref{state}) to a point $z\geq M$. Indeed, by the proof of Proposition \ref{state}, if such a path exists, then by definition $x$ belongs to one of finitely many endorsed sets. With the remaining set being finite, the total number of endorsed sets is therefore finite.

Now, let $x\in D_E$ be given. By the definition of endorsed sets, there exists a path $x\mapsto y$ with $\lambda_k(y)>0$ for all $k\in \mathcal{R}$. As $\rank\Xi=d$, for each $1\leq j\leq d$, there exists $k(j)\in \mathcal{R}$ such that $\langle e_j,\xi_{k(j)}\rangle\neq0$. Keeping the $j$th coordinate fixed and increasing the possible other if necessary, thus arriving at a point $y'$ with $y_j=y_j'$ and $y_i\leq y_i'$ for $i\neq j$, by the stoichiometric compatibility condition we may by repeated use of reaction $k(j)$ find a path $y'\mapsto z$ or $z\mapsto y'$ with $z\geq M$. Repeating the argument for the possible remaining coordinate, we conclude that there exists an $M'\in \mathbb{N}^d$ such that if $x\nless M'$ then $x$ has an undirected walk to a point $z\geq M$. As the set $\{x\in D_E\,|\, x<M'\}$ is finite, this concludes the proof.
\end{proof}

One may easily verify, that the second part of Proposition \ref{state} can also be extended for $d=1$. Indeed, for any point $x\in D_E$ there exists a reaction $k\in \mathcal{R}$ such that $\lambda_k(x)>0$ and either $x+\xi_k>x$ or $x+\xi_k<x$. Otherwise $x\in D_A$. Consequently, there is a point $z\ge M$ for any $M\in D_E$ such that either $x\mapsto z$ or $z\mapsto x$. However, note that the network in Figure \ref{counter} shows that this result does not hold in the case $d=2$.

\medskip
An endorsed set, $E_n$, $n\ge 1$, is only irreducible if it consists of a single irreducible class. If there is more than one irreducible class in $E_n$, then we need that there is a smallest one to ensure uniqueness of a QSD.

\begin{ass}\label{class}
For a given endorsed class $E_n$, $n\ge 1$, we assume:
\begin{enumerate}
  \item[(i)] $E_n$ contains a unique minimal irreducible class, $\mathcal I^n_{\min}$.
  \item[(ii)] if $A_n\neq \emptyset$ then $\mathcal I^n_{\min}\mapsto A_n$.
\end{enumerate}
\end{ass}

We shall see that Assumption \ref{class}(i) is equivalent to a more technical property of the state space, which is necessary for our results to hold. Thus no generality is lost in having Assumption \ref{class}(i). 

\medskip
Networks without any minimal class exists, for example $\emptyset\to S_1$, which does not have an absorbing set either. Furthermore, networks with more than one minimal class also exist, for example, $S_1+S_2\to \emptyset$, $S_2\to \emptyset$. Thus Assumption \ref{class}(i) is indeed not superfluous. We believe Assumption \ref{class}(ii) is always met if Assumption \ref{class}(i) is. It ensures that one may always reach the absorbing set, if it is non-empty.

\medskip
Definition \ref{statespace} accommodates the general case where uniqueness of a QSD does not necessarily hold, in which case the support of the QSD may stretch the entire endorsed set rather than, as we shall see, the unique minimal irreducible class. We remark that rather than investigating an entire endorsed set, one may be interested in a particular irreducible component, say $\mathcal{I}$. Letting the state space be $D=E\sqcup A$ where
\begin{align*}
E=\mathcal{I}, \qquad  A=\bigcup_{\mathcal{J}\in \mathscr{I}_A\colon E\mapsto \mathcal{J}}\mathcal{J},
\end{align*}
the theory to be developed in this paper applies to this case as well.

\medskip
As an illuminating example consider the generalized death process $mS_1\to \emptyset$ with $m\in \mathbb{N}$, where each point in the state space $D=\mathbb{N}_0$ constitutes its own irreducible class. Here, the endorsed and absorbing sets are $E_n=\{n+pm-1 \,|\, p\in \mathbb{N}\}$ and $A_n=\{n-1\}$ respectively, for $n=1,\dots,m$, and Assumption \ref{class} is satisfied for all $n$. Thus, $D_E=\{m,m+1,\dots\}$ and $D_A=\{0,\dots,m-1\}$. It is known that in the simple death case, $m=1$, uniqueness does not hold on $D_E=\mathbb{N}$. Indeed, there is a continuum of QSDs with support larger than $\{1\}$, the unique minimal class \cite{AG}.
\\\\
\begin{figure}[h!]
\begin{center}
 \def\svgwidth{140pt}
  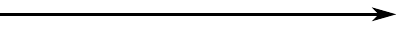
\end{center}
\vspace{-0.3cm}
\caption{State space of the reaction network $2S_1\to \emptyset$. There are two endorsed sets.}
\end{figure}
\vspace{-0.1cm}

\medskip
In the setting of birth-death processes in one dimension, which has an infinite state space, it is known that there will be either none, a unique or a continuum of QSDs \cite{VD}. Consider the two reaction networks
\begin{align}\label{1d}
\emptyset \overset{\raise0.2em\hbox{\scriptsize$\alpha_1$}}{\leftarrow}S_1\overset{\raise0.2em\hbox{\scriptsize$\alpha_2$}}{\rightarrow} 2S_1, \qquad\qquad \emptyset  \overset{\raise0.2em\hbox{\scriptsize$\alpha_1$}}{\leftarrow} S_1\underset{\raise0.2em\hbox{\scriptsize$\alpha_3$}}{\overset{\raise0em\hbox{\scriptsize$\alpha_2$}}{\rightleftharpoons}} 2S_1,
\end{align}
endowed with mass action kinetics. In both cases we conclude, according to Definition \ref{statespace}, that the set of absorbing states and the set of endorsed states are
\begin{align*}
D_A=A_1=\{0\}, \qquad D_E=E_1=\{1,2,\dots\},
\end{align*}
respectively. For the network on the left in (\ref{1d}), assuming $\alpha_1>\alpha_2$, there is a continuum of QSDs on $E_1$, while for the network on the right, there is a unique QSD on $E_1$, for all parameter values \cite{pop}. This fits well with our result -- the necessity of having reactions of order higher than one to ensure uniqueness permeates to higher dimensions.

\section{Extension of Arguments}

In this and the following sections, we shall simply use the notation $E$ to refer to a single endorsed set, with corresponding non-empty absorbing set $A$, when there is no ambiguity. Further, as existence and uniqueness is known on finite state spaces, we shall assume without loss of generality that $E$ is countably infinite. We make the following definitions inspired by \cite{cd} and \cite{mt}.
\begin{defi}\label{normlike}
For any vector $v\in \mathbb{N}^d$, we define a corresponding function $\langle v,\cdot\rangle\colon\mathbb{Z}^d\to \mathbb{Z}$ given by the standard inner product
\begin{align*}
\langle v,x\rangle=x\cdot v.
\end{align*}
\end{defi}

This function may in general take negative values, however, when restricting $\langle v,\cdot\rangle$ to $E$, one obtains a norm-like function, \cite{mt}. Choosing $v=(1,\dots,1)$ we recover the function used in \cite{cd}. In general, we shall choose $v\in \mathbb{N}^d$ based on the particular reaction network at hand, and will in the following consider it fixed. For $n\in \mathbb{N}$, define the sets
 \begin{align*}
O_n=\{x\in E\colon \langle v,x\rangle\leq n\}.
\end{align*}
which, irrespectively of $v$, are compact subsets of $E$, satisfying $O_n\subseteq O_{n+1}$ and $E=\bigcup_{n\in \mathbb{N}}O_n$. We denote the first hitting time of $A$, the first hitting time of $O_n$ and the first exit time of $O_n$ by
 \begin{align*}
 \tau_A=\inf\{t\geq0\colon X_t\in A\},\qquad \tau_{n}=\inf\{t\geq0\colon X_t\in O_n\}, \qquad T_n=\inf\{t\geq0\colon X_t\notin O_n\},
\end{align*}
respectively. Note that all of these are stopping times and might be infinite.  As we will be concerned with the application of unbounded functions serving the purpose of a Lyapunov function, we introduce the weakened  generator, $L$, for the Markov process \cite{mt,Expo}.

\begin{defi}
A measurable function $W\colon D\to \mathbb{R}$ belongs to the domain $\mathcal{D}(L)$ of the weakened  generator $L$ of $(X_t\colon t\geq0)$ if there exists a measurable function $U\colon E\to \mathbb{R}$ such that, for all $n\in \mathbb{N}$, $t\geq0$ and $x\in E$
\begin{align*}
\mathbb{E}_x W(X_{t\wedge T_n})=W(x)+\mathbb{E}_x\left(\int_0^{t\wedge T_n}U(X_s)\,ds\right),
\end{align*}
and
\begin{align}\label{weak2}
\mathbb{E}_x\left|\int_0^{t\wedge T_n}U(X_s)\,ds\right|<\infty,
\end{align}
and we define $LW=U$ on $E$ and $LW\equiv 0$ on $A$.
\end{defi}

As the state space of interest is always countable, all functions $f\colon D\to \mathbb{R}$ are measurable. Moreover,  as the state space is discrete and $O_n$ is finite  for all $n\in \mathbb{N}$, all functions $f\colon D\to \mathbb{R}$ are in the domain of the weakened generator, $\mathcal{D}(L)$ \cite{mt}.  In particular, $\mathbb{E}_x W(X_{t\wedge T_n})$ is well-defined and finite. Generally, the weakened  and the infinitesimal generator need not agree, and the infinitesimal generator may not exist \cite{mt}.

\medskip
However, if $f$ is bounded, then they do agree. In particular,  $E_x|f(X_t)|<\infty$ and it follows that as $t\to 0$,
\begin{align*}
\mathbb{E}_xf(X_t)&=\left(\sum_{k\in \mathcal{R}}f(x+\xi_k)\mathbb{P}_x(X_t=x+\xi_k)\right)+f(x)\mathbb{P}_x(X_t=x)+o(t)\\
&=\left(\sum_{k\in \mathcal{R}}f(x+\xi_k)\lambda_k(x) t+o(t)\right)+f(x)\left(1-\sum_{k\in \mathcal{R}}\lambda_k(x) t+o(t)\right)+o(t)\nonumber\\
&=\sum_{k\in \mathcal{R}}\lambda_k(x)(f(x+\xi_k)-f(x))t+f(x)+o(t).\nonumber
\end{align*}
Hence $\mathbb{E}_xf(X_t)$ is differentiable and from the fundamental theorem of calculus we conclude that the weakened generator coincides with the (weak) infinitesimal generator \cite{mt},
\begin{align*}
\widehat L f(x)=\lim_{t\to 0}\frac{\mathbb{E}_x f(X_t)-f(x)}{t}=\sum_{k\in \mathcal{R}}\lambda_k(x)(f(x+\xi_k)-f(x)),
\end{align*}
for $x\in E$.

\medskip
Moreover,  setting $W(x)=\langle v,x\rangle$ as in Definition \ref{normlike}, it follows from the Poisson characterization of the process \eqref{poisson}, that
\vspace{-0.3cm}
\begin{align*}
\mathbb{E}_x W(X_{t\wedge T_n})&=\mathbb{E}_x \langle v,X_{t\wedge T_n}\rangle
=\mathbb{E}_x \langle v, X_0\rangle+\mathbb{E}_x \langle v,\sum_{k\in \mathcal{R}}Y_k\left(\int_0^{t\wedge T_n} \lambda_k(X_s)\,ds\right)\xi_k\rangle\\
&=W(x)+\mathbb{E}_x\left(\int_0^{t\wedge T_n}\sum_{k\in \mathcal{R}} \lambda_k(X_s)\langle v,\,\xi_k\rangle \,ds\right),
\end{align*}
such that 
\begin{align}\label{p9}
LW(x)=\sum_{k\in \mathcal{R}}\lambda_k(x) \langle v,\xi_k\rangle, \qquad x\in E.
\end{align}
Note that \eqref{weak2} is fulfilled as $O_n$ is finite.

\begin{defi}\label{definitive}
Define the functions $d_v, d^v\colon \mathbb{N}\to \mathbb{R}$ by
\begin{align*}
d_v(n)&=-\max_{x\in E, \langle v,x\rangle=n}\sum_{k=1}^r\lambda_k(x)\langle v,\xi_k\rangle\mathbbm{1}_{E}(x+\xi_k),\\
d^v(n)&=\max_{x\in E, \langle v,x\rangle=n}n\sum_{k=1}^r\lambda_k(x)\mathbbm{1}_{A}(x+\xi_k).
\end{align*}
\end{defi}
All networks, for which extinction is possible, have the property that there exists a $v\in \mathbb{N}^d$ such that $\langle v,\xi_k \rangle\leq0$ for some reaction $k\in \mathcal{R}$. Indeed, suppose that $\langle v,\xi_k\rangle>0$ for all $v\in \mathbb{N}^d$ and $k\in \mathcal{R}$. Then $\xi_k\in \mathbb{N}_0^d$ for all $k\in \mathcal{R}$, hence $x+\xi_k\geq x$ for any $x\in D$. In particular, if $x\in E$ then $x+\xi_k\in E$ and we conclude, from the observation that any absorbing set is confined to a subset of $\{x\in \mathbb{N}_0^d\,|\, x\ngeq M\}$ for some $M$ sufficiently large, that the process is not absorbed. By contraposition the desired claim holds.
Note that for fixed $n\in \mathbb{N}$, the set $\{x\in E, \langle v,x\rangle=n\}$ might be empty, thus we define
\begin{align*}
\mathscr{N}=\{n\in \mathbb{N}\,|\, \exists\, x\in E\colon \langle v,x\rangle=n\},
\end{align*}
and make the following central assumption.
\begin{ass}\label{ass1}
There exists $v\in \mathbb{N}^d$ and $\eta>0$, $N\in \mathbb{N}$, such that, for $n\geq N$,
\begin{align*}
d_v(n)\geq \eta\, d^v(n),
\end{align*}
and, with the limit being taken over $\mathscr{N}\subseteq \mathbb{N}$,
\begin{align*}
\lim_{n\to \infty}\frac{d_v(n)}{n^{1+\eta}}=\infty.
\end{align*}
\end{ass}

We note that, as $d^v(n)$ is always non-negative, this assumption assures that $d_v(n)$ is non-negative for $n$ sufficiently large. We shall see that this assumption further ensures the ability to ``come down from infinity'' in finite time. In the case where there is no absorbing set, the empty sum in Definition \ref{definitive} yields $d^v(n)=0$, and we may reformulate a result of \cite{scale} (see below). In this paper, we will extend the result to the case where $E$ is not necessarily irreducible, but satisfies Assumption \ref{class}(i), and where there may exist a non-empty absorbing set of states (see Theorem \ref{samlet}).

\begin{theo}\label{noabs}
For a reaction network satisfying Assumption \ref{ass1}, with $A=\emptyset$ and $E$ irreducible, the associated stochastic process $(X_t: t\geq 0)$ is exponentially ergodic and thus admits a unique stationary distribution $\pi$. Further, there exist constants $C,\gamma>0$ such that, for all probability measures $\mu$ on $E$,
\begin{align*}
\|\mathbb{P}_\mu(X_t\in \cdot)-\pi(\cdot)\|_{TV}\leq Ce^{-\gamma t}, \qquad t\geq 0.
\end{align*} 
\end{theo}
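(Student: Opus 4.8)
The plan is to verify a Foster--Lyapunov drift inequality strong enough to invoke the exponential ergodicity machinery of \cite{scale,mt}, and then to upgrade the usual state-dependent geometric ergodicity into the uniform bound asserted. Throughout I would take the norm-like function $W(x)=\langle v,x\rangle$ of Definition \ref{normlike}, whose sublevel sets are the finite sets $O_n$. First I record the drift: by \eqref{p9} we have $LW(x)=\sum_{k\in\mathcal R}\lambda_k(x)\langle v,\xi_k\rangle$, and since $A=\emptyset$ every admissible transition stays in $E$, so that $\mathbbm{1}_{E}(x+\xi_k)=1$ whenever $\lambda_k(x)>0$. Comparing with Definition \ref{definitive} this gives precisely
\[
LW(x)\le -d_v(\langle v,x\rangle),\qquad x\in E,
\]
while the empty sum forces $d^v\equiv 0$, so the first half of Assumption \ref{ass1} holds trivially and only the growth condition $d_v(n)/n^{1+\eta}\to\infty$ is used below.

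From that growth condition there exist $c>0$ and $N'\in\mathbb N$ with $d_v(n)\ge c\,n^{1+\eta}$ for all $n\ge N'$, whence $LW(x)\le -c\,W(x)^{1+\eta}$ off the finite set $O_{N'}$; adding the bounded contribution on $O_{N'}$ produces the global super-linear drift $LW\le -c\,W^{1+\eta}+b\,\mathbbm{1}_{O_{N'}}$. Since $W^{1+\eta}\ge W$ once $W\ge 1$, this is in particular the geometric drift condition $LW\le -\gamma_0 W+b'\mathbbm{1}_{C}$ with $C=O_{N'}$ finite. Non-explosiveness is then immediate, as $LW\le b'$ on all of $E$, and because $E$ is an irreducible countable state space every finite set---in particular $C$---is petite. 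The standard Foster--Lyapunov theory \cite{mt} already yields positive Harris recurrence, a unique stationary distribution $\pi$, and geometric ergodicity of the form $\|\mathbb P_x(X_t\in\cdot)-\pi\|_{TV}\le C(x)e^{-\gamma t}$.

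The remaining, and essential, point is to remove the dependence of the prefactor on the starting state, that is, to obtain one constant valid for every initial law $\mu$. This is where the \emph{super}-linear nature of the drift is decisive. Taking expectations in the drift inequality and applying Jensen's inequality to the convex map $w\mapsto w^{1+\eta}$ gives, for the running expectation $w(t)=\mathbb E_x W(X_{t\wedge T_n})$, the differential inequality $w'(t)\le -c\,w(t)^{1+\eta}+b'$, which I would compare with the scalar equation $\dot w=-c\,w^{1+\eta}$. For $\eta>0$ the latter collapses from $+\infty$ to any fixed finite level in a time that does not depend on the initial value, giving the ``coming down from infinity'' estimate $\sup_{x\in E}\mathbb E_x[\tau_{N'}]<\infty$. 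Combined with the irreducibility-based minorization on the finite set $O_{N'}$, this yields a Doeblin-type condition uniform in the initial state, which is exactly the hypothesis entering the cited result of \cite{scale}; feeding it in converts the state-dependent bound above into $\|\mathbb P_\mu(X_t\in\cdot)-\pi\|_{TV}\le Ce^{-\gamma t}$ uniformly over $\mu$.

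I expect this last passage to be the main obstacle. Ordinary Foster--Lyapunov arguments only control $\mathbb E_x[\tau_{N'}]$ by a quantity growing in $W(x)$, so the crux is to turn the superlinear decay of $d_v$---via the ODE comparison and the uniform-in-time control it provides---into an entrance estimate that is genuinely uniform over \emph{all} $x$, including those with arbitrarily large $W(x)$. Care is also needed to justify passing to the limit along the localizing sequence $T_n$ (using that the $O_n$ are finite and the process non-explosive) so that the comparison is valid up to $t$ rather than merely $t\wedge T_n$. Once this uniform entrance estimate is secured, the conclusion follows by the machinery of \cite{scale,mt}.
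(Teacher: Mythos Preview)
Your outline is correct, but it takes a substantially longer route than the paper and leans on the strict positivity of $\eta$ where the paper does not. The paper simply extracts from Assumption~\ref{ass1}(ii) the \emph{linear} drift inequality
\[
\sum_{k\in\mathcal R}\lambda_k(x)\langle v,\xi_k\rangle\le c_1-c\,\langle v,x\rangle,\qquad x\in E,
\]
(just as you do: $d_v(n)\ge cn^{1+\eta}\ge cn$ for $n$ large, then absorb the finite remainder into $c_1$), and observes that this is already the hypothesis of \cite[Proposition~4]{scale}, which under irreducibility of $E$ directly returns the state-independent bound $\|\mathbb P_{x_0}(X_t\in\cdot)-\pi\|_{TV}\le Ce^{-\gamma t}$. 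Extension to arbitrary initial laws $\mu$ is then a two-line convexity argument in total variation. The paper explicitly remarks after the proof that $\eta=0$ would suffice here; superlinearity of $d_v$ is only exploited later (Proposition~\ref{black}) when $A\neq\emptyset$.

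Your detour---first obtaining the $W$-weighted geometric bound from Meyn--Tweedie, then using the superlinear drift via an ODE comparison to force a uniform entrance time into $O_{N'}$, and finally assembling a Doeblin minorization---is essentially a from-scratch reconstruction of what \cite[Proposition~4]{scale} already packages. The ODE comparison you sketch is precisely the argument the paper carries out in Proposition~\ref{black}, but that machinery is reserved for the absorbed case. So your approach works and has the merit of being more self-contained, but it demands $\eta>0$ where the paper's does not, and the step you flag as the main obstacle (turning the uniform entrance estimate into a genuine Doeblin condition at a fixed time) is real work that the paper simply outsources. Incidentally, once you have a uniform Doeblin condition you no longer need to invoke \cite{scale} at all: the standard Doeblin coupling gives uniform exponential ergodicity directly.
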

\begin{proof}
For $x\in D$, let $n=\langle v,x\rangle$. Thus, by Assumption \ref{ass1}(ii), for any constant $c>0$,
\begin{align*}
-\sum_{k=1}^r \lambda_k(x) \langle v,\xi_k\rangle\geq- \max_{x'\in E, \langle v,x'\rangle=n}\sum_{k=1}^r \lambda_k(x') \langle v,\xi_k\rangle=d_v(n)\geq c n^{1+\eta}\geq c \langle v,x\rangle,
\end{align*}
for $n$ larger than some $N\in \mathbb{N}$. The set of $x'\in D$ such that $\langle v,x'\rangle=n\leq N$ is compact, hence there is $c_1>0$ such that
\begin{align*}
d_v(n)\geq c n-c_1=c \langle v,x\rangle-c_1.
\end{align*}
We conclude that for all $x\in D$,
\begin{align*}
\sum_{k=1}^r \lambda_k(x) \langle v,\xi_k\rangle\leq c_1-c\langle v,x \rangle,
\end{align*}
hence from \cite[Proposition 4]{scale} it follows, due to irreducibility of $E$, that there exist constants $C,\gamma>0$ such that for all $x_0\in E$,
\begin{align*}
\|\mathbb{P}_{x_0}(X_t\in \cdot)-\pi(\cdot)\|_{TV}\leq Ce^{-\gamma t}
\end{align*}
for all $t\geq 0$. Finally, if we consider the random starting point $X_0\sim \mu$, we find
\begin{align*}
&\|\mathbb{P}_{\mu}(X_t\in \cdot)-\pi(\cdot)\|_{TV}=\left\|\sum_{x_0\in E}\mu(x_0)(\mathbb{P}_{x_0}(X_t\in \cdot)-\pi(\cdot))\right\|_{TV}\\
&\leq \sum_{x_0\in E}\mu(x_0)\|\mathbb{P}_{x_0}(X_t\in \cdot)-\pi(\cdot)\|_{TV}\leq \sum_{x_0 \in E}\mu(x_0)Ce^{-\gamma t}=Ce^{-\gamma t},
\end{align*}
as required.
\end{proof}

It is sufficient to have $\eta=0$  for Theorem \ref{noabs} to hold. However, as we shall see, if $A\neq \emptyset$ then $\eta>0$ is required. The intuitive meaning is that the quasi-stationary distribution exists on the long-time, but not infinite time horizon, where the process will be absorbed. Thus if the process does not ``come down from infinity in finite time'', that is if $\eta=0$, 
starting close to $A$ will almost surely result in absorption while starting at ``infinity'' will not, contradicting uniqueness of the QSD. When no absorbing set exist, however, the quasi-stationary distribution reduces to the stationary distribution which exists on the infinite time horizon.

\section{Verifying Assumptions}

We start by introducing some notation and definitions from \cite{cd} for ease of reference.
\begin{defi}\label{couple}
A couple $(V,\varphi)$ of measurable functions $V$ and $\varphi$ from $D=E\cup A$ to $\mathbb{R}$ is an admissible couple of functions if
\begin{enumerate}
  \item[(i)] $V$ and $\varphi$ are bounded and nonnegative on $D$, positive on $E$, satisfy $V(x)=\varphi(x)=0$ for all $x\in A$, and further
  \begin{align*}
\inf_{x\in E}\frac{V(x)}{\varphi(x)}>0.
\end{align*}
  \item[(ii)] For all sequences $(x_p)_{p\geq 1}$ in $E$ such that $\{p\in \mathbb{N}\colon x_p\in O_n\}$ is finite for all $n\geq 1$,
\begin{align*}
\lim_{p\to \infty}\frac{V(x_p)}{\varphi(x_p)}=\infty, \qquad \text{and} \qquad
  \lim_{n\to \infty}V(X_{T_n})=0 \quad \mathbb{P}_x\text{-a.s.} \text{ for all $x\in E$}.
\end{align*}
  \item[(iii)] $LV$ is bounded from above and $L\varphi$ is bounded from below.
\end{enumerate}
\end{defi}

The definition of a couple of admissible functions in \cite{cd} further requires that $V$ and $\varphi$ belong to the domain of the weakened infinitesimal generator of $(X_t\colon t\geq 0)$. However, since any function $f\colon\mathbb{N}_0^d\to \mathbb{R}$ is in this domain for discrete state spaces \cite{mt}, the requirement is automatically satisfied. Furthermore, as $V,\varphi$ are bounded, the infinitesimal generator $\widehat L$ is defined hereon and agrees with the weakened generator $L$.
 
\medskip
The question of extinction has recently attracted much attention on its own \cite{ext}. Therefore, we provide the following proposition which renders an explicit criterion for when the stochastic process associated to a stochastic reaction network goes extinct almost surely.  The assumption in the proposition is weaker than Assumption \ref{ass1}.
\begin{prop}\label{lem1}
Under Assumption \ref{class}, with $A\neq \emptyset$,  the process $(X_t\colon t\geq 0)$ is absorbed $\mathbb{P}_x$-a.s. for all $x\in E$ if $d_v(n)>\zeta\frac{d^v(n)}{n}$ for $n$ sufficiently large, where
\begin{align*}
\zeta=\underset{k\in \mathcal{R}_A}{\max}\langle v,\xi_k\rangle, \qquad \mathcal{R}_A=\{k\in \mathcal{R}: (E+\xi_k)\cap A\neq \emptyset\}.
\end{align*}
\end{prop}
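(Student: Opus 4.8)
The plan is to use $W(x)=\langle v,x\rangle$ as a norm-like Lyapunov function, extract a strict negative drift from the hypothesis, and then combine this with the reachability of $A$ guaranteed by Assumption \ref{class} to force absorption.

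First I would compute the drift. Starting from \eqref{p9} and splitting each reaction according to whether it keeps the process inside $E$ or sends it into $A$ (recall that $x+\xi_k\in D=E\sqcup A$ whenever $\lambda_k(x)>0$), one writes, for $x\in E$ with $\langle v,x\rangle=n$,
\begin{align*}
LW(x)=\sum_{k\in\mathcal{R}}\lambda_k(x)\langle v,\xi_k\rangle\mathbbm{1}_E(x+\xi_k)+\sum_{k\in\mathcal{R}}\lambda_k(x)\langle v,\xi_k\rangle\mathbbm{1}_A(x+\xi_k).
\end{align*}
By Definition \ref{definitive} the first sum is at most $-d_v(n)$, while every reaction contributing to the second sum satisfies $x+\xi_k\in A$ with $x\in E$, hence belongs to $\mathcal{R}_A$, so that $\langle v,\xi_k\rangle\leq\zeta$ and the second sum is bounded by $\zeta\sum_k\lambda_k(x)\mathbbm{1}_A(x+\xi_k)\leq \zeta\, d^v(n)/n$. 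This yields $LW(x)\leq -d_v(n)+\zeta\, d^v(n)/n$, which the hypothesis makes strictly negative once $n\geq N$ (the estimate is cleanest when $\zeta\geq0$; when $\zeta<0$ the absorbing contribution is nonpositive and a directly analogous, simpler bound applies). Thus $LW<0$ on $E\setminus O_N$, with $O_N$ finite.

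Next I would establish that the process returns to $O_N$ or is absorbed in finite time. Since $W\geq0$ is norm-like on $E$ and $LW<0$ off the finite set $O_N$, a standard localization at the exit times $T_m$ together with non-explosivity shows that $W(X_{t\wedge\theta})$, where $\theta=\tau_{O_N}\wedge\tau_A$, is a nonnegative supermartingale. By the supermartingale convergence theorem it converges almost surely, so on the event $\{\theta=\infty\}$ the integer-valued, right-continuous process $\langle v,X_t\rangle$ converges and is therefore eventually equal to some level $\ell>N$. However, $LW(x)<0$ at each state of that level forces the existence of an active reaction with $\langle v,\xi_k\rangle<0$, so the process almost surely eventually makes a $W$-decreasing jump, contradicting eventual constancy at $\ell$. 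Hence $\theta<\infty$ almost surely.

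Finally I would turn recurrence into almost sure absorption. By Assumption \ref{class}(i) the minimal class $\mathcal I_{\min}$ is reachable from every state of $E$, and by (ii) $\mathcal I_{\min}\mapsto A$; concatenating paths shows that from every $x\in O_N$ there is a path into $A$. As $O_N$ is finite there exist $t^*<\infty$ and $p^*>0$ with $\inf_{x\in O_N}\mathbb{P}_x(\tau_A\leq t^*)\geq p^*$. Setting $s_0=\theta$ and, on $\{\tau_A>s_j+t^*\}$, $s_{j+1}=\inf\{t\geq s_j+t^*:X_t\in O_N\}\wedge\tau_A$, the previous paragraph and the strong Markov property give $s_j<\infty$ almost surely, with absorption occurring on each window $[s_j,s_j+t^*]$ with conditional probability at least $p^*$; a Borel--Cantelli argument then forces $\mathbb{P}_x(\tau_A=\infty)\leq\prod_j(1-p^*)=0$. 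The main obstacle is the recurrence step: because the hypothesis only gives $LW\leq0$ rather than a uniform $LW\leq -c<0$, I cannot invoke a positive-recurrence criterion directly and must instead rule out both escape to infinity and indefinite oscillation at a fixed level, which is precisely where the strictness of $d_v(n)>\zeta\, d^v(n)/n$ and the discreteness of $\langle v,X_t\rangle$ are essential; a secondary point requiring care is the sign of $\zeta$ in the drift estimate and the localization justifying the supermartingale property.
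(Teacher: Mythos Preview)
Your drift computation is essentially identical to the paper's: split $LW(x)$ according to whether $x+\xi_k$ lands in $E$ or $A$, bound the $E$-part by $-d_v(n)$ and the $A$-part by $\zeta\,d^v(n)/n$, and conclude $LW<0$ outside the finite set $O_N$. Your caveat about the sign of $\zeta$ is apposite; the paper glosses over this, writing the bound $\le -d_v(n)+\frac{\zeta}{n}d^v(n)$ uniformly, which is only literally an upper bound when $\zeta\ge 0$.

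Where the two proofs diverge is in how they pass from ``$LW<0$ off $O_N$'' to almost-sure absorption. The paper invokes two external results: Meyn--Tweedie's non-evanescence criterion \cite[Theorem~3.1]{mt} to rule out $\langle v,X_t\rangle\to\infty$, and then a Durrett-type ``infinitely often implies eventually'' lemma \cite[Theorem~2.3]{durrett} applied to the jump chain to force absorption from repeated visits to $O_m$. Your argument is more hands-on: you make $W(X_{t\wedge\theta})$ a nonnegative supermartingale, use integer-valuedness to force it eventually constant on $\{\theta=\infty\}$, and derive a contradiction from the fact that every state on the limiting level carries an active $W$-decreasing reaction (here the finiteness of each level set $\{\langle v,\cdot\rangle=\ell\}$, which holds since $v\in\mathbb{N}^d$, is tacitly used to pin down a state visited infinitely often). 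You then run an explicit renewal--Borel--Cantelli scheme through $O_N$ rather than citing Durrett.

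Both routes are valid. The paper's is shorter because it outsources the two nontrivial steps; yours is self-contained and arguably more transparent about why strictness of the inequality and discreteness of $\langle v,X_t\rangle$ matter, at the cost of several details (localization at $T_m$, Fatou to pass to $m\to\infty$, finiteness of level sets) that need to be spelled out in a full write-up. One thing the paper extracts at the end and you do not mention explicitly is $\lim_n T_n=\tau_A$ (regular absorption), which is used later; it follows immediately from non-explosiveness and $\tau_A<\infty$.
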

\begin{proof}
Define the norm-like function $W(x)=\langle v,x\rangle$ on $\mathbb{N}_0^d$ as in Definition \ref{normlike}, and let $L$ be the weakened infinitesimal generator of $(X_t\colon t\geq 0)$. It follows from (\ref{p9}) and the assumption that $d_v(n)>C\frac{d^v(n)}{n}$ for $n$ sufficiently large that for each $x\in E$ with $\langle v,x\rangle=n$,
\begin{align}
LW(x)=&
\sum_{k=1}^r\lambda_k(x)\langle v,\xi_k\rangle\nonumber\\
=&\sum_{k=1}^r\lambda_k(x)\langle v,\xi_k\rangle  \mathbbm{1}_{E}(x+\xi_k)+\sum_{k=1}^r\lambda_k(x)\langle v,\xi_k\rangle \mathbbm{1}_{A}(x+\xi_k)\nonumber\\
\leq&\max_{x'\in E,\langle v,x'\rangle=n}\sum_{k=1}^r\lambda_k(x')\langle v,\xi_k\rangle  \mathbbm{1}_{E}(x'+\xi_k)+\max_{x'\in E,\langle v,x'\rangle=n}\sum_{k=1}^r\lambda_k(x')\langle v,\xi_k\rangle \mathbbm{1}_{A}(x'+\xi_k)\nonumber\\
\leq& -d_v(n)+\zeta\max_{x'\in E, \langle v,x'\rangle=n}\sum_{k=1}^r \lambda_k(x')\mathbbm{1}_{A}(x'+\xi_k)=-d_v(n)+\frac{\zeta}{n}d^{v}(n)<0,\label{aqua}
\end{align}
for $n$ sufficiently large. In particular, there exists an $N\in \mathbb{N}$ such that for $n=\langle v,x\rangle \geq N$, we have $LW(x)<0$, hence, setting $M=\max_{x\in E\colon1\leq \langle v,x\rangle \leq N}\{0,LW(x)\}$, yields
\begin{align*}
LW(x)\leq M\cdot \mathbbm{1}_{O_N}(x), \qquad x\in E.
\end{align*}
Since $O_N$ is compact, we may apply \cite[Theorem 3.1]{mt} to conclude that the process $(X_t\colon t\geq 0)$ is non-evanescent, that is,
\begin{align}\label{noescape}
\mathbb{P}_x\left(\langle v,X_t\rangle\xrightarrow{t\rightarrow\infty}\infty\right)=0, \qquad x\in E.
\end{align}

Define the discrete time jump chain $(Y_n\colon n\in \mathbb{N}_0)$ by $Y_n=X_{J_n}$, where $J_0,J_1,\dots$ denote the jump times of $(X_t\colon t\geq 0)$ given by
\begin{align*}
J_0=0, \qquad J_{n+1}=\inf\{t\geq J_n: X_t\neq X_{J_n}\}.
\end{align*}
Let $B_m=\{Y_n\in O_m \text{ i.o.}\}$ and $F=\{Y_n\in A \text{ i.o.}\}=\{Y_n\in A \text{ for some $n$}\}$, where the last equality follows from $A$ being an absorbing set. By Assumption \ref{class}, all states in $O_m\subseteq E$ have a shortest path to $A$ (there is a path to the minimal irreducible class, and then to $A$, for any $x\in O_m$), which has some positive probability. For each state $x\in O_m$, let $b_x$ be the probability of this shortest path, and define $\beta_m=\min_{x\in O_m} b_x$. As $O_m$ is compact, $\beta_m>0$. It follows that for each $n\in \mathbb{N}_0$ the conditioned process fulfils
\begin{align*}
\mathbb{P}\left(\bigcup_{k=1}^\infty (Y_{n+k}\in A)\Big| Y_n\in O_m\right)\geq \beta_m>0.
\end{align*}
By \cite[Theorem 2.3]{durrett} we get
\begin{align}\label{trick}
\mathbb{P}_y\left(B_m\backslash F\right)=0,
\end{align}
with $y=Y_0=X_0=x$, for any $m\in \mathbb{N}$. Now, the complement of the event $\bigcup_{m=0}^\infty B_m$ is the event $G\cup F$, where $G=\left\{\langle v,Y_n\rangle \xrightarrow{n\rightarrow\infty} \infty\right\}$. As $B_m$ is an increasing sequence of events in $m$, we obtain by monotone convergence  and (\ref{trick}) that
\begin{align*}
1=\mathbb{P}_y\left(G\cup F\cup \bigcup_m B_m\right)=\lim_{m\to \infty}\mathbb{P}_y(G\cup F\cup B_m) = \lim_{m\to \infty}\mathbb{P}_y(G\cup F)=\mathbb{P}_y(G\cup F).
\end{align*}
Thus, $(Y_n\colon n\in \mathbb{N}_0)$ either tends to infinity or is eventually absorbed in $A$. The same holds for the full process $(X_t\colon t\geq 0)$, and by (\ref{noescape}) we conclude that $\mathbb P_x(G)=0$ hence $\mathbb{P}_x(\tau_A<\infty)=\mathbb{P}_x(X_t\in A \text{ for some $t$})=1$. In particular, we also have that
\begin{align*}
\lim_{n\to \infty}T_n=\tau_A,
\end{align*}
thus the process is regularly absorbed, by definition.
\end{proof}

Note that $\zeta$ in Proposition \ref{lem1} may be negative thus Assumption \ref{ass1}(i) is stronger and immediately provides the same conclusion of almost sure absorption of the process. Further, as we shall see in the next proposition, Assumption \ref{ass1}(ii) assures that the expected magnitude of $X_t$, in the form of $\langle v,X_t\rangle$ given $X_0=x$, is uniformly bounded in $x\in E$ for any $t>0$. This, in turn, implies that the time of ``coming down from infinity" is finite, which is closely related to the uniqueness of QSDs. This is where $\eta>0$ is required.

\begin{prop}\label{black}
Under Assumptions \ref{class}-\ref{ass1} with $A\neq \emptyset$, the process $(X_t,t\geq0)$ satisfies
\begin{align*}
\tau_A =\lim_{n\to \infty}T_n<\infty \qquad \mathbb{P}_x\text{-a.s. } \text{ for all $x\in E$},
\end{align*}
in particular, the process is absorbed $\mathbb{P}_x$-a.s. Further, $\sup_{x\in E}\mathbb{E}_x\langle v,X_t\rangle<\infty$ for any $t>0$.
\end{prop}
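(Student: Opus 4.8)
The plan is to reduce both assertions to a single super-linear Foster--Lyapunov estimate for the norm-like function $W(x)=\langle v,x\rangle$ and then run a ``coming down from infinity'' comparison. The first assertion, $\tau_A=\lim_n T_n<\infty$ a.s., is essentially already contained in Proposition \ref{lem1}: Assumption \ref{ass1}(i) gives $d^v(n)\le \eta^{-1}d_v(n)$, whence $\zeta\, d^v(n)/n\le (\zeta/\eta n)\,d_v(n)<d_v(n)$ for $n$ large (the case $\zeta\le 0$ being trivial, since $d_v(n)\ge 0$ eventually by Assumption \ref{ass1}(ii)). This is precisely the hypothesis of Proposition \ref{lem1}, so regular absorption, $\tau_A=\lim_n T_n<\infty$ a.s., and almost sure absorption follow directly; only the moment bound requires new work.

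For the moment bound I would first upgrade the computation \eqref{aqua} into a genuine super-linear drift inequality. Combining \eqref{p9} with Definition \ref{definitive} and Assumption \ref{ass1}, for $n=\langle v,x\rangle$ sufficiently large one has $LW(x)\le -d_v(n)+\tfrac{\zeta}{n}d^v(n)\le -\tfrac12 d_v(n)$, the last step using $d^v(n)\le \eta^{-1}d_v(n)$ together with $n>2\zeta/\eta$. Assumption \ref{ass1}(ii) then provides $d_v(n)\ge c\,n^{1+\eta}$ for any prescribed constant $c$ and all large $n$; absorbing the finite set $\{x\in E\colon \langle v,x\rangle\le N\}$ into an additive constant yields $c_1>0$ and $c_2\ge 0$ with
\[
LW(x)\le -c_1\langle v,x\rangle^{1+\eta}+c_2,\qquad x\in E.
\]
Note that this bound already incorporates the absorbing jumps, since they sit inside the single sum in \eqref{p9}; Assumption \ref{ass1}(i) is exactly what guarantees the absorbing contribution $\tfrac\zeta n d^v(n)$ cannot spoil the negativity coming from $d_v(n)$.

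The heart of the argument is to convert this pointwise inequality into a bound that does not depend on the initial state. I would apply Dynkin's formula to the stopped process, $\mathbb{E}_x W(X_{t\wedge T_n})=W(x)+\mathbb{E}_x\int_0^{t\wedge T_n}LW(X_s)\,ds$, which is legitimate because $W$ is bounded on the finite set $O_n$. Working with the sub-Markovian (killed) semigroup and writing $u_n(t)=\mathbb{E}_x[W(X_t)\mathbbm{1}_{t<T_n}]$, the generator of the killed process satisfies $L^{O_n}W\le LW$ on $O_n$ (discarding the gain from jumps leaving $O_n$ only helps, as $W\ge 0$); differentiating in $t$ and applying Jensen's inequality to the convex map $z\mapsto z^{1+\eta}$ gives the scalar differential inequality $u_n'(t)\le -c_1 u_n(t)^{1+\eta}+c_2$. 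A comparison with the ODE $y'=-c_1 y^{1+\eta}+c_2$ then closes the argument: its maximal solution $\bar y$ issued from $y(0^{+})=+\infty$ remains finite for every $t>0$, with $\bar y(t)\lesssim (c_1\eta t)^{-1/\eta}$, and dominates $u_n$ uniformly in $x$ and $n$. Letting $n\to\infty$ by monotone convergence (using $T_n\uparrow \tau_A$) yields $\sup_{x\in E}\mathbb{E}_x\langle v,X_t\rangle\le \bar y(t)<\infty$ for every $t>0$.

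The main obstacle is precisely this uniform comparison, i.e.\ extracting a bound independent of $W(x)$: the finiteness of the from-infinity solution $\bar y(t)$ for $t>0$ hinges on $\int^{\infty}y^{-(1+\eta)}\,dy<\infty$, which holds \emph{exactly} when $\eta>0$, so Assumption \ref{ass1}(ii) with a strict power gain is indispensable here (and is what distinguishes this estimate from the $\eta=0$ situation of Theorem \ref{noabs}). A secondary technical difficulty is justifying the differentiation in $t$ and the interchange of limits for the unbounded $W$; I would handle this throughout via the stopping times $T_n$, where all quantities are finite, and only pass $n\to\infty$ at the very end. I would also emphasize that the estimate genuinely controls the living process, since the drift inequality is only available on $E$; this is why the comparison must be carried out for the killed semigroup rather than for the frozen process after absorption.
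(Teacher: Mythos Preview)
Your proposal is correct and follows the same overall strategy as the paper: reduce the first assertion to Proposition \ref{lem1} via Assumption \ref{ass1}(i), derive the super-linear drift bound $LW\le -c_1 W^{1+\eta}+c_2$ from Assumption \ref{ass1}(ii), and then combine Dynkin, Jensen, and an ODE comparison to obtain a bound independent of the initial point. The technical packaging differs in two respects worth recording. First, the paper works with the full (frozen-after-absorption) process and applies Dynkin's formula to $\mathbb{E}_x W(X_t)$ directly, then carries out the comparison via Petrovitsch's theorem against the auxiliary ODEs $g'_{x,\epsilon}=D_3-D_1 g^{1+\eta}$ and $h'_x=-D_1 h^{1+\eta}$, finally dominating $g_x$ by the explicit envelope $k_x(t)=D_3 t+(D_1\eta t+W(x)^{-\eta})^{-1/\eta}$; your route via the maximal solution $\bar y$ issued from $+\infty$ is the same estimate stated more abstractly. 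Second, you localise through the stopping times $T_n$ and the killed semigroup, which cleanly sidesteps the differentiability and integrability issues for the unbounded $W$ that the paper handles by appealing to the martingale property. One small discrepancy: your monotone-convergence limit actually produces $\sup_{x\in E}\mathbb{E}_x[\langle v,X_t\rangle\mathbbm{1}_{t<\tau_A}]\le \bar y(t)$, not the full expectation $\mathbb{E}_x\langle v,X_t\rangle$ as stated in the proposition, since $W$ need not vanish on $A$; you acknowledge this at the end, and indeed the killed bound is exactly what the subsequent lemma uses, so nothing is lost.
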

\begin{proof} By Assumption \ref{ass1}(i), it follows, applying the same notation as in Proposition \ref{lem1}, that
\begin{align*}
d_v(n)\geq  \eta d^v(n)>\zeta\frac{d^v(n)}{n},
\end{align*}
for $n$ sufficiently large and $\zeta$ as in the proposition. Thus by Proposition \ref{lem1}, the process $(X_t\colon t\geq 0)$ satisfies
\begin{align*}
\tau_A =\lim_{n\to \infty}T_n<\infty,
\end{align*}
$\mathbb{P}_x$-a.s. for all $x\in E$. Hence the process is regularly absorbed.

The second claim is apparently a `classical result' \cite{Expo} but we are not aware of a proof in the literature, hence we provide one here. Let $W(x)=\langle v,x\rangle$ on $\mathbb{N}_0^d$ as in Definition \ref{normlike}.  
It follows from (\ref{aqua}) of Proposition \ref{lem1} and Assumption \ref{ass1}(i) that with $\langle v,x\rangle=n$,
\begin{align*}
LW(x)&\leq  -d_v(n)+\zeta\max_{x'\in E, \langle v,x'\rangle=n}\sum_{k=1}^r \lambda_k(x')\mathbbm{1}_{A}(x'+\xi_k)\\
&\leq -d_v(n)+\zeta\frac{1}{n}d^v(n)\leq -d_v(n)+\frac{\zeta}{n \eta}d_v(n)=-\left(1-\frac{\zeta}{n\eta}\right)d_v(n).
\end{align*}
It follows, under Assumption \ref{ass1}(ii), that 
\begin{align*}
\frac{LW(x)}{W(x)^{1+\eta}}\leq -\left(1-\frac{\zeta}{n\eta}\right)\frac{d_v(n)}{n^{1+\eta}}\to -\infty,
\end{align*}
as $n=\langle v,x\rangle\to \infty$ in $\mathscr{N}$, in which case $1-\zeta/(n\eta)$ becomes positive. Hence, there exist constants $D_1,D_2>0$ such that
\begin{align}\label{eq}
LW(x)\leq D_2-D_1W(x)^{1+\eta}, \qquad \text{for all } x\in E.
\end{align}
Since we have $\sum_{k\in \mathcal{R}}\lambda_k(x)<\infty$ for each $x\in D$, it follows from \cite[p. 12]{stochreac} and the equivalence of the weakened and infinitesimal generators on $W$ that
\begin{align*}
W(X_{t})-W(0)-\int_0^{t}L W(X_s)\,ds
\end{align*}
is a martingale. Thus, by the martingale property we find
\begin{align}\label{dynkin}
\mathbb{E}_x W(X_{t})&=W(x)+\int_0^{t}\mathbb{E}_x(LW(X_s))\,ds,
\end{align}
which is a form of Dynkin's formula \cite{kallenberg}. Using the bound (\ref{eq}) combined with Jensen's inequality we obtain, upon differentiation of (\ref{dynkin}),
\begin{align*}
\frac{d}{dt}\mathbb{E}_xW(X_t)=\mathbb{E}_x(LW(X_t))\leq \mathbb{E}_x(D_2-D_1W(X_t)^{1+\eta})\leq D_2-D_1(\mathbb{E}_xW(X_t))^{1+\eta}.
\end{align*}
Define $f_x(t)=\mathbb{E}_xW(X_t)$ and choose $D_3>D_2$. Consider the associated differential equations
\begin{align}
g'_{x,\epsilon}(t)&=D_3-D_1 g_{x,\epsilon}(t)^{1+\eta}, \qquad g_{x,\epsilon}(0)=W(x)+\epsilon\label{ones}\\
h'_x(t)&=-D_1h_x(t)^{1+\eta}, \qquad h_x(0)=W(x)\label{twos}.
\end{align}
Define $F(t,z)=D_3-D_1z^{1+\eta}$. It then follows that
\begin{align*}
f'_x(t)&<F(t,f_x(t)), \qquad\quad f_x(0)=W(x),\\
h'_x(t)&<F(t,h_x(t)), \qquad\quad h_x(0)=W(x),\\
g'_{x,\epsilon}(t)&=F(t,g_{x,\epsilon}(t)), \qquad g_{x,\epsilon}(0)=W(x)+\epsilon.
\end{align*}
By Petrovitsch' theorem \cite[p. 316]{ineq} applied twice and the fact that solutions to the ordinary differential equations are continuous in the initial value, we conclude that
\begin{align}\label{ulige}
f_x(t)\leq g_{x}(t),  \qquad h_x(t)\leq g_{x}(t)\qquad t\in [0,T],
\end{align}
where $g_{x}(t)=\lim_{\epsilon\to 0}g_{x,\epsilon}(t)$. The solution to the initial value problem (\ref{ones})  cannot be given in explicit form, however, the associated simpler differential equation (\ref{twos}) does have an explicit solution for $\eta>0$, given by
\begin{align*}
h_x(t)=\frac{1}{(D_1\eta t+W(x)^{-\eta})^{1/\eta}}, \qquad t \geq 0.
\end{align*}
In order to use this to bound $f_x(t)$, define the function
\begin{align*}
k_t(x)=D_3t+h_x(t).
\end{align*}
Then, using (\ref{ulige}), we have
\begin{align*}
g'_{x}(t)=D_3-D_1g_{x}(t)^{1+\eta}\leq D_3-D_1h_x(t)^{1+\eta}=D_3+h_x'(t)=k_x'(t).
\end{align*}
Since $g_x(0)=W(x)=k_x(0)$ it follows that $g_x(t)\leq k_x(t)$ and we infer that
\begin{align*}
\sup_{x\in E}\mathbb{E}_x W(X_t)=\sup_{x\in E}f_x(t)\leq \sup_{x\in E}g_x(t)\leq \sup_{x\in E}k_x(t)<\infty,
\end{align*}
for all fixed $t>0$ as desired.
\end{proof}
\begin{defi}\label{defi1}
Let $v\in \mathbb{N}^d$ and $\alpha,\beta>1$. Define $V\colon\mathbb{N}_0^d\to \mathbb{R}$ and $\varphi\colon \mathbb{N}_0^d\to \mathbb{R}$ by
\begin{align*}
V(x)= \mathbbm{1}_{E}(x)\sum_{j=1}^{\langle v,x\rangle}\frac{1}{j^\alpha}, \qquad \varphi(x)= \mathbbm{1}_{E}(x)\sum_{j=\langle v,x\rangle+1}^{\infty}\frac{1}{j^\beta}.
\end{align*}
\end{defi}

\begin{lem} \label{lem2} Under Assumption \ref{ass1}, for suitable choices of $\alpha,\beta>1$, the pair $(V,\varphi)$ satisfies
\begin{enumerate}
 \item[(a)] $V, \varphi$ are bounded.
  \item[(b)] There exists an integer $n$ and a constant $C\geq0$, such that
  \begin{align*}
-L\varphi\leq C \mathbbm{1}_{O_n}.
\end{align*}
  \item[(c)] There exists constants $\epsilon, C'>0$ and $C''\geq0$, such that
  \begin{align*}
LV+C'\frac{V^{1+\epsilon}}{\varphi^{\epsilon}}\leq C''\varphi.
\end{align*}
\end{enumerate}
\end{lem}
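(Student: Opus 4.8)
The plan is to produce explicit exponents $\alpha,\beta,\epsilon$ and run every estimate through the generator formula $LV(x)=\widehat L V(x)=\sum_{k\in\mathcal R}\lambda_k(x)(V(x+\xi_k)-V(x))$, which is valid since $V,\varphi$ are bounded. Part (a) is immediate: as $\alpha,\beta>1$ both $\sum_{j\geq1}j^{-\alpha}$ and $\sum_{j\geq1}j^{-\beta}$ converge, so $V\leq\zeta(\alpha)$ and $\varphi\leq\zeta(\beta)$ are bounded on $D$. The whole content lies in choosing the exponents so that (b) and (c) hold simultaneously, and the engine for both is the same decomposition of $Lf(x)$, for $x\in E$ with $n=\langle v,x\rangle$, into a within-$E$ sum and an absorbing sum, together with the monotonicity of $j\mapsto j^{-\gamma}$ which lets me bound increments \emph{without error terms}.

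For (b) I write $L\varphi(x)=\sum_{k}\lambda_k(x)(\varphi(x+\xi_k)-\varphi(x))$ and split on whether $x+\xi_k\in E$ or $x+\xi_k\in A$. Since $j\mapsto j^{-\beta}$ is decreasing, every within-$E$ increment obeys $\varphi(x+\xi_k)-\varphi(x)\geq -\langle v,\xi_k\rangle n^{-\beta}$ (both signs of $\langle v,\xi_k\rangle$ are handled uniformly), while each absorbing increment equals $-\varphi(x)$ with $\varphi(x)\leq n^{1-\beta}/(\beta-1)$. Summing against $\lambda_k(x)\geq0$ and inserting the definitions of $d_v,d^v$ from Definition \ref{definitive} yields $L\varphi(x)\geq n^{-\beta}\bigl(d_v(n)-d^v(n)/(\beta-1)\bigr)$. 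Assumption \ref{ass1}(i) gives $d^v(n)\leq d_v(n)/\eta$, so choosing $\beta>1+1/\eta$ makes the bracket a positive multiple of $d_v(n)$, which is nonnegative for large $n$. Hence $L\varphi\geq0$ outside a finite set $O_{n_0}$, and on the finite set $-L\varphi$ is bounded, giving $-L\varphi\leq C\,\mathbbm 1_{O_{n_0}}$.

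For (c) the identical split applied to $V$, using that $j\mapsto j^{-\alpha}$ is decreasing, gives the reversed increment bound $V(x+\xi_k)-V(x)\leq \langle v,\xi_k\rangle n^{-\alpha}$ on within-$E$ moves and a nonpositive absorbing contribution, whence $LV(x)\leq -n^{-\alpha}d_v(n)$. Meanwhile $V\leq\zeta(\alpha)$ and $\varphi(x)\geq (n+1)^{1-\beta}/(\beta-1)$ give $V^{1+\epsilon}/\varphi^{\epsilon}\leq \text{const}\cdot n^{\epsilon(\beta-1)}$, so $LV+C'V^{1+\epsilon}/\varphi^{\epsilon}\leq -n^{-\alpha}d_v(n)+\text{const}\cdot n^{\epsilon(\beta-1)}$. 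Because $\varphi\geq0$ it suffices that this be $\leq0$ for large $n$; writing $n^{-\alpha}d_v(n)=n^{\epsilon(\beta-1)}\cdot d_v(n)/n^{\alpha+\epsilon(\beta-1)}$, I fix $\alpha\in(1,1+\eta)$ and $\epsilon=(1+\eta-\alpha)/(\beta-1)>0$ so that $\alpha+\epsilon(\beta-1)=1+\eta$; then Assumption \ref{ass1}(ii), $d_v(n)/n^{1+\eta}\to\infty$, forces $n^{-\alpha}d_v(n)$ to dominate the polynomial term, killing the right-hand side for $n$ large. On the remaining finite set $O_{n_1}$ every quantity is bounded and $\varphi>0$, so a sufficiently large $C''$ absorbs the deficit.

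The main obstacle is the joint calibration in (c). The requirement $\alpha>1$ from (a) together with $\alpha<1+\eta$ (needed for a positive $\epsilon$ to exist) is exactly what forces $\eta>0$, i.e. the ``coming down from infinity'' regime; and one cannot merely know $-LV\to\infty$—one must verify that the polynomial blow-up $n^{\epsilon(\beta-1)}$ of $V^{1+\epsilon}/\varphi^{\epsilon}$ is strictly beaten by $n^{-\alpha}d_v(n)$, which is guaranteed only after pinning $\alpha+\epsilon(\beta-1)=1+\eta$ and invoking the super-polynomial growth of $d_v$. Coordinating this with the independent constraint $\beta>1+1/\eta$ coming from (b) is where all three parameters become locked together. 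By contrast the increment estimates are routine once the monotonicity of $j\mapsto j^{-\gamma}$ is used to produce sharp two-sided bounds with no remainder.
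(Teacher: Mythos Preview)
Your proof is correct and follows essentially the same approach as the paper: the same split of $Lf$ into within-$E$ and absorbing contributions, the same monotonicity bounds on the increments yielding $L\varphi(x)\geq n^{-\beta}\bigl(d_v(n)-d^v(n)/(\beta-1)\bigr)$ and $LV(x)\leq -n^{-\alpha}d_v(n)$, and the same calibration $\alpha+\epsilon(\beta-1)=1+\eta$ (the paper picks the specific representative $\alpha=1+\eta/2$, $\epsilon=\eta/(2(\beta-1))$). Your treatment of the residual finite set in (c) via $\varphi>0$ on $E$ is in fact slightly more explicit than the paper's closing sentence.
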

\begin{proof} As $v\in \mathbb{N}^d$, it follows that $\langle v,x\rangle\in \mathbb{N}_0$ for all $x\in E$. In particular, $V(x),\varphi(x)\leq \sum_{j=1}^\infty \frac{1}{j}^p$ with $p>1$ which is a convergent hyperharmonic series. This proves (a). Thinking in terms of Riemann sums, using that $j\mapsto j^{-\beta}$ is decreasing in $j$ for $\beta>1$, we obtain the bound
\begin{align*}
\sum_{j=\langle v,x\rangle+1}^\infty\frac{1}{j^\beta}\leq \int_{\langle v,x\rangle}^\infty \frac{1}{y^\beta}\,dy=\frac{\langle v,x\rangle^{1-\beta}}{\beta-1}.
\end{align*}
Exploiting the linearity of $\langle v,\cdot \rangle$, we consider, for $x\in E$ with $n=\langle v,x\rangle$, and $L$ the weakened generator which coincides with the infinitesimal generator since $\varphi$ is bounded,
\begin{align*}
L\varphi(x)=&\sum_{k=1}^r\lambda_k(x)\left(\varphi(x+\xi_k)-\varphi(x)\right)\\
=&\sum_{k=1}^r\lambda_k(x)\mathbbm{1}_{E}(x+\xi_k)\left(\varphi(x+\xi_k)-\varphi(x)\right)+\sum_{k=1}^r\lambda_k(x)\mathbbm{1}_{A}(x+\xi_k)\left(\varphi(x+\xi_k)-\varphi(x)\right)\\
=&\sum_{k=1}^r\lambda_k(x)\mathbbm{1}_{E}(x+\xi_k)\left(\sum_{j=\langle v, x+\xi_k\rangle+1}^\infty \frac{1}{j^\beta}-\sum_{j=\langle v, x\rangle+1}^\infty \frac{1}{j^\beta}\right)\\
&-\sum_{k=1}^r\lambda_k(x)\mathbbm{1}_{A}(x+\xi_k)\sum_{j=\langle v,x\rangle+1}^\infty\frac{1}{j^\beta}\\
=&-\sum_{k=1}^r\lambda_k(x)\mathbbm{1}_{E}(x+\xi_k)\mathbbm{1}_{(0,\infty)}(\langle v,\xi_k\rangle)\sum_{i=1}^{\langle v,\xi_k\rangle}\frac{1}{(\langle v,x\rangle+i)^\beta}\\
&+\sum_{k=1}^r\lambda_k(x)\mathbbm{1}_{E}(x+\xi_k)\mathbbm{1}_{(-\infty,0)}(\langle v,\xi_k\rangle)\sum_{i=\langle v,\xi_k\rangle}^{-1}\frac{1}{(\langle v,x\rangle+i+1)^\beta}\\
&-\sum_{k=1}^r\lambda_k(x)\mathbbm{1}_{A}(x+\xi_k)\sum_{j=\langle v,x\rangle+1}^\infty\frac{1}{j^\beta}\\
\geq &  \frac{1}{\langle v,x\rangle^\beta}\left(-\sum_{k=1}^r\lambda_k(x)\mathbbm{1}_{E}(x+\xi_k)\mathbbm{1}_{(0,\infty)}(\langle v,\xi_k\rangle)\langle v,\xi_k\rangle\right.\\
&\left.-\sum_{k=1}^r\lambda_k(x)\mathbbm{1}_{E}(x+\xi_k)\mathbbm{1}_{(-\infty,0)}(\langle v,\xi_k\rangle)\langle v,\xi_k\rangle\right)-\sum_{k=1}^r\lambda_k(x)\mathbbm{1}_{A}(x+\xi_k)\frac{\langle v,x\rangle^{1-\beta}}{\beta-1}\\
=& \frac{1}{\langle v,x\rangle^\beta}\left(-\sum_{k=1}^r\lambda_k(x)\mathbbm{1}_{E}(x+\xi_k)\langle v,\xi_k\rangle-\sum_{k=1}^r\lambda_k(x)\mathbbm{1}_{A}(x+\xi_k)\frac{\langle v,x\rangle}{\beta-1}\right)\\
\geq&\frac{1}{\langle v,x\rangle^\beta}\left(-\max_{x'\in E, \langle v,x'\rangle=n}\sum_{k=1}^r\lambda_k(x')\langle v,\xi_k\rangle\mathbbm{1}_{E}(x'+\xi_k)\right.\\
&\left.-\max_{x'\in E, \langle v,x'\rangle=n}\sum_{k=1}^r\lambda_k(x')\mathbbm{1}_{A}(x'+\xi_k)\frac{\langle v,x'\rangle}{\beta-1}\right)\\
=&\frac{1}{n^\beta}\left(d_v(n)-\frac{d^v(n)}{\beta-1}\right).
\end{align*}
Using Assumption \ref{ass1}(i), we can choose $\beta>1$ large enough such that $L\varphi(x)\geq0$ for all $n=\langle v,x\rangle$ sufficiently large. In particular, condition (b) is satisfied.
\\\\
Similarly, as $V$ is bounded, we find for $x\in E$ with $\langle x,v\rangle=n$,
\begin{align*}
LV(x)=&\sum_{k=1}^{r}\lambda_k(x)\left(V(x+\xi_k)-V(x)\right)\\
=&\sum_{k=1}^r\lambda_k(x)\mathbbm{1}_{E}(x+\xi_k)\left(\sum_{j=1}^{\langle v,x+\xi_k\rangle}\frac{1}{j^\alpha}-\sum_{j=1}^{\langle v,x\rangle}\frac{1}{j^\alpha}\right)-\sum_{k=1}^r\lambda_k(x)\mathbbm{1}_{A}(x+\xi_k)\sum_{j=1}^{\langle v,x\rangle }\frac{1}{j^\alpha}\\
\leq& \sum_{k=1}^r \lambda_k(x)\mathbbm{1}_{E}(x+\xi_k)\mathbbm{1}_{(0,\infty)}(\langle v,\xi_k\rangle)\sum_{j=\langle v,x\rangle+1}^{\langle v,x+\xi_k\rangle}\frac{1}{j^\alpha}\\
&-\sum_{k=1}^r\lambda_k(x)\mathbbm{1}_{E}(x+\xi_k)\mathbbm{1}_{(-\infty,0)}(\langle v,\xi_k\rangle)\sum_{j=\langle v,x+\xi_k\rangle+1}^{\langle v,x\rangle}\frac{1}{j^\alpha}\\
=&\sum_{k=1}^r \lambda_k(x)\mathbbm{1}_{E}(x+\xi_k)\mathbbm{1}_{(0,\infty)}(\langle v,\xi_k\rangle)\sum_{j=1}^{\langle v,\xi_k\rangle}\frac{1}{(j+\langle v,x\rangle)^\alpha}\\
&-\sum_{k=1}^r\lambda_k(x)\mathbbm{1}_{E}(x+\xi_k)\mathbbm{1}_{(-\infty,0)}(\langle v,\xi_k\rangle)\sum_{j=\langle v,\xi_k\rangle}^{-1}\frac{1}{(j+\langle v,x\rangle+1)^\alpha}\\
\leq& \sum_{k=1}^r \lambda_k(x)\mathbbm{1}_{E}(x+\xi_k)\mathbbm{1}_{(0,\infty)}(\langle v,\xi_k\rangle)\frac{\langle v,\xi_k\rangle}{\langle v,x\rangle^\alpha}\\
&+\sum_{k=1}^{r}\lambda_k(x)\mathbbm{1}_{E}(x+\xi_k)\mathbbm{1}_{(-\infty,0)}(\langle v,\xi_k\rangle)\frac{\langle v,\xi_k\rangle}{\langle v,x\rangle^\alpha}\\
\leq& \frac{1}{\langle v,x\rangle^\alpha}\sum_{k=1}^r\lambda_k(x)\langle v,\xi_k\rangle\mathbbm{1}_{E}(x+\xi_k)
\\
\leq&\frac{1}{\langle v,x\rangle^\alpha}\max_{x'\in E, \langle v,x'\rangle=n}\sum_{k=1}^r\lambda_k(x')\langle v,\xi_k\rangle\mathbbm{1}_{E}(x'+\xi_k)= -\frac{d_v(n)}{n^\alpha}.
\end{align*}
Note that by treating $V(x)$ as a lower Riemann sum, for $x\in E$,
\begin{align*}
V(x)&\leq \sum_{j=1}^{\infty}\frac{1}{j^\alpha}=1+\sum_{j=2}^{\infty}\frac{1}{j^\alpha}\leq 1+\int_1^\infty \frac{1}{x^\alpha}\,dx=\frac{\alpha}{\alpha-1},
\end{align*}
and similarly, treating $\varphi(x)$ as an upper Riemann sum,
\begin{align*}
\varphi(x)&=\sum_{j=\langle v,x\rangle+1}^\infty\frac{1}{j^\beta}\geq\int_{\langle v,x\rangle+1}^\infty \frac{1}{x^\beta}\,dx=\frac{(1+\langle v,x\rangle)^{1-\beta}}{\beta-1}\geq \frac{\langle v,x\rangle^{1-\beta}}{2(\beta-1)},
\end{align*}
with the last inequality holding for $\langle v,x\rangle$ sufficiently large, using that for $\beta>1$,
\begin{align*}
\frac{(1+\langle v,x\rangle)^{1-\beta}}{\langle v,x\rangle^{1-\beta}}=\left(\frac{1}{\langle v,x\rangle}+1\right)^{1-\beta}\to 1>\frac{1}{2}, \qquad \text{for }\quad n=\langle v,x\rangle \to \infty.
\end{align*}
We infer that
\begin{align*}
LV(x)+\frac{V^{1+\epsilon}(x)}{\varphi^\epsilon(x)}\leq -\frac{d_v(n)}{n^\alpha}+Cn^{\epsilon(\beta-1)},
\end{align*}
where $C=[\alpha/(\alpha-1)]^{1+\epsilon}[2(\beta-1)]^\epsilon$. Note that by definition $\varphi(x)>0$ for $x\in E$ hence, choosing $\alpha=1+\eta/2$ and $\epsilon=\eta/[2(\beta-1)]$, we get
\begin{align*}
LV(x)+\frac{V^{1+\epsilon}(x)}{\varphi^\epsilon(x)}\leq -\frac{d_v(n)}{n^{1+\eta/2}}+Cn^{\eta/2}=\left(C-\frac{d_v(n)}{n^{1+\eta}}\right)n^{\eta/2}.
\end{align*}
Using Assumption \ref{ass1}(ii), the first term becomes negative for $\langle v,x\rangle$ sufficiently large. In other words $LV(x)+\frac{V^{1+\epsilon}(x)}{\varphi^\epsilon(x)}\leq 0$ for $x\notin O_n$ with $n$ sufficiently large. Since by definition $\varphi(x)\geq0$, condition (c) holds.
\end{proof}
\begin{lem}\label{lem3} Under Assumpstions \ref{class}-\ref{ass1}, 
the pair $(V,\varphi)$ is an admissible couple of functions.
\end{lem}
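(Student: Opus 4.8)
The plan is to verify, one at a time, the three requirements of Definition~\ref{couple} for the pair $(V,\varphi)$ of Definition~\ref{defi1}, leaning on the estimates already obtained in Lemma~\ref{lem2} together with the finite-time absorption of Proposition~\ref{black}. Two of the three conditions are essentially bookkeeping; the real work is the almost-sure statement $\lim_{n\to\infty}V(X_{T_n})=0$ appearing in~(ii).

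For condition~(i), boundedness is exactly Lemma~\ref{lem2}(a), while nonnegativity and the vanishing on $A$ are immediate from the factor $\mathbbm{1}_E$ in both definitions. Positivity on $E$ is clear for $\varphi$, which is a nonempty tail of a convergent positive series, and holds for $V=\sum_{j=1}^{\langle v,x\rangle}j^{-\alpha}$ as soon as $\langle v,x\rangle\ge 1$ on $E$, which I would record as a small preliminary observation. For the ratio bound I would note that $V(x)/\varphi(x)$ depends on $x$ only through $n=\langle v,x\rangle$, is increasing in $n$ (the numerator increases and the denominator decreases), and is strictly positive at the smallest value of $n$ attained on $E$; hence $\inf_{x\in E}V(x)/\varphi(x)>0$. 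Condition~(iii) is equally short: from Lemma~\ref{lem2}(c) we have $LV\le C''\varphi-C'V^{1+\epsilon}/\varphi^\epsilon\le C''\varphi$, and since $\varphi$ is bounded this bounds $LV$ from above; Lemma~\ref{lem2}(b) gives $L\varphi\ge -C\,\mathbbm{1}_{O_n}\ge -C$, so $L\varphi$ is bounded from below.

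The substance is condition~(ii). Its first half is a direct computation: if $(x_p)$ leaves every compact $O_n$ eventually, then by definition of $O_n$ we must have $\langle v,x_p\rangle\to\infty$, whence $V(x_p)\to\sum_{j\ge1}j^{-\alpha}<\infty$ while $\varphi(x_p)\to0$, so $V(x_p)/\varphi(x_p)\to\infty$. For the second half I would argue as follows. By Proposition~\ref{black}, $T_n\uparrow\tau_A<\infty$ almost surely. Since the process is non-explosive, it performs only finitely many jumps on the finite interval $[0,\tau_A]$, hence visits only finitely many states of $E$ before absorption; let $N^\ast=\max\{\langle v,X_t\rangle:\, t<\tau_A,\ X_t\in E\}<\infty$, a finite random variable. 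For every $n\ge N^\ast$ the entire pre-absorption trajectory lies in $O_n$, so the process can exit $O_n$ only by jumping into $A$; consequently $X_{T_n}\in A$ and $V(X_{T_n})=0$. Thus $V(X_{T_n})=0$ for all $n\ge N^\ast$, which yields $\lim_{n\to\infty}V(X_{T_n})=0$ $\mathbb{P}_x$-a.s.

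The one genuinely delicate point is this last step, and it is where I expect the main obstacle to lie: the conclusion $V(X_{T_n})\to0$ would fail if the process could escape to infinity without being absorbed, since then $X_{T_n}$ would sit at ever-larger $\langle v,\cdot\rangle$, where $V$ is bounded away from $0$. It is precisely Proposition~\ref{black}---and through it Assumption~\ref{ass1}(ii), the ability to ``come down from infinity'' in finite time---that rules this out and forces every exit of $O_n$, for $n$ large, to be an absorption event. Collecting (i)--(iii) then shows that $(V,\varphi)$ is an admissible couple.
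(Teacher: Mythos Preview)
Your proposal is correct and follows essentially the same route as the paper: verify the three clauses of Definition~\ref{couple} using Lemma~\ref{lem2} for (i) and (iii), and Proposition~\ref{black} for the almost-sure limit in (ii). The only cosmetic difference is that the paper dispatches $\lim_{n\to\infty}V(X_{T_n})=0$ in one line by writing $V(X_{T_n})\to V(X_{\tau_A})=0$ (since $T_n\to\tau_A$ and the process is regularly absorbed), whereas you spell out explicitly why eventually $T_n=\tau_A$ via the finite random level $N^\ast$; your version is a bit more transparent but amounts to the same argument.
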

\begin{proof}
Choose $\alpha,\beta\in \mathbb{R}$ such that the conclusions of Lemma \ref{lem2} hold. In particular, $\alpha,\beta>1$ hence the functions $V$ and $\varphi$ are bounded, non-negative on $E\cup A$ and positive on $E$, and by definition $V(x)=\varphi(x)=0$ for $x\in A$. Furthermore, $\inf_{x\in E} V(x)>0$, hence by non-negativity of $\varphi$,
\begin{align*}
\inf_{x\in E}\frac{V(x)}{\varphi(x)}>0,
\end{align*}
and Definition \ref{couple}(1) is fulfilled.
Let $(x_p)_{p\geq1}$ be any sequence in $E$ such that the set $\{p\in \mathbb{N}: x_p\in O_n\}$ is finite for all $n\geq1$. Then $\langle v,x_p\rangle\to \infty$ as $p\to \infty$, hence
\begin{align*}
\lim_{p\to \infty}\frac{V(x_p)}{\varphi(x_p)}=\lim_{p\to \infty}\frac{\sum_{j=1}^{\langle v,x_p\rangle}\frac{1}{j^\alpha}}{\sum_{j=\langle v,x_p\rangle+1}^{\infty}\frac{1}{j^\beta}}=\infty.
\end{align*}
Furthermore, since the process is regularly absorbed by Proposition \ref{black}, we have
\begin{align*}
\lim_{n\to \infty}V(X_{T_n})=V(X_{\tau_A})=0 \qquad \mathbb{P}_x\text{-}a.s,
\end{align*}
hence Definition \ref{couple}(2) is fulfilled. Finally, from Lemma \ref{lem2} and the fact that $V$ and $\varphi$ are both bounded functions, it follows that $LV$ is bounded from above and $L\varphi$ is bounded from below. This concludes the proof.
\end{proof}

\subsection{Lemmas}
\begin{lem}\label{first}
Assumption \ref{class}(i) is equivalent to the following: There exists $n_0\in \mathbb{N}$, $\theta_0,\theta_1,a_1>0$ and a probability measure $\nu$ on $E$ such that, for all $x\in O_{n_0}$ and all $s\in [\theta_0,\theta_0+\theta_1]$,
\begin{align}\label{firstone}
\mathbb{P}_x(X_s\in \cdot) \geq a_1\nu,
\end{align}
and in addition, for all $n\geq n_0$, there exists $s_n\geq 0$ such that
\begin{align}\label{secondone}
\inf_{x\in O_n} \mathbb{P}_x(X_{s_n}\in O_{n_0})>0.
\end{align}

\end{lem}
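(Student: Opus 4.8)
The plan is to prove the two implications separately, using as a bridge the single statement that there is a point $x^{*}\in E$ reachable from every state, i.e. $x\mapsto x^{*}$ for all $x\in E$. The only analytic input I need is the standard fact that, for a continuous-time chain on a discrete space with finite total jump rate $\sum_{k}\lambda_{k}(x)<\infty$, the relation $x\mapsto y$ already forces $\mathbb{P}_{x}(X_{t}=y)>0$ for \emph{all} $t>0$: one follows a positive-rate path $x=z_{0}\to\cdots\to z_{m}=y$, and the probability of realising these jumps with holding times summing to any prescribed $t$ is a positive integral of exponential densities. Together with the continuity of $t\mapsto\mathbb{P}_{x}(X_{t}=y)$ and the finiteness of each $O_{n}$, this converts reachability into the uniform, time-interval lower bounds demanded by \eqref{firstone} and \eqref{secondone}.

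For the forward implication, assume $E$ has a unique minimal irreducible class $\mathcal{I}_{\min}$. First I would show $x\mapsto\mathcal{I}_{\min}$ for every $x\in E$: the endorsed classes reachable from $x$ form a down-set in $(\mathscr{I},\preceq)$, and any minimal element of this down-set is minimal in the whole component $\mathscr{V}_{n}$, hence equals $\mathcal{I}_{\min}$ by uniqueness. Fixing a reference point $x^{*}\in\mathcal{I}_{\min}$, I set $\nu=\delta_{x^{*}}$ and choose $n_{0}$ large enough that $x^{*}\in O_{n_{0}}$. For \eqref{firstone}, $O_{n_{0}}$ is finite and $x\mapsto x^{*}$ for each $x\in O_{n_{0}}$, so $\mathbb{P}_{x}(X_{s}=x^{*})>0$ for every $s>0$; since $(x,s)\mapsto\mathbb{P}_{x}(X_{s}=x^{*})$ is continuous and $O_{n_{0}}\times[\theta_{0},\theta_{0}+\theta_{1}]$ is compact, its minimum $a_{1}$ is strictly positive, whence $\mathbb{P}_{x}(X_{s}\in\cdot)\ge\mathbb{P}_{x}(X_{s}=x^{*})\,\delta_{x^{*}}\ge a_{1}\nu$. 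For \eqref{secondone}, fix $n\ge n_{0}$; for each $x\in O_{n}$ we again have $\mathbb{P}_{x}(X_{s_{n}}=x^{*})>0$ for any $s_{n}>0$, and since $x^{*}\in O_{n_{0}}$ this bounds $\mathbb{P}_{x}(X_{s_{n}}\in O_{n_{0}})$ below, with a positive infimum over the finite set $O_{n}$.

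For the reverse implication, assume \eqref{firstone} and \eqref{secondone}. As $E$ is countable, $\nu$ has an atom $y^{*}$ with $\nu(\{y^{*}\})>0$. By \eqref{secondone} every $x\in E$ reaches $O_{n_{0}}$ (take $n$ with $x\in O_{n}$), and by \eqref{firstone} every $z\in O_{n_{0}}$ satisfies $\mathbb{P}_{z}(X_{s}=y^{*})\ge a_{1}\nu(\{y^{*}\})>0$, i.e. $z\mapsto y^{*}$; concatenating paths gives $x\mapsto y^{*}$ for all $x\in E$. Let $\mathcal{J}$ be the class of $y^{*}$. If some endorsed class satisfied $\mathcal{K}\prec\mathcal{J}$, its points lie in $E$ and hence reach $y^{*}$, giving $\mathcal{K}\mapsto\mathcal{J}$ and so $\mathcal{J}=\mathcal{K}$ by antisymmetry, a contradiction; thus $\mathcal{J}$ is minimal. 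If $\mathcal{I}'$ is any minimal class, then $y^{*}$ is reachable from $\mathcal{I}'$, so $\mathcal{J}\preceq\mathcal{I}'$, whence $\mathcal{J}=\mathcal{I}'$ by minimality. Hence $\mathcal{J}$ is the unique minimal class, which is Assumption \ref{class}(i).

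The step I expect to be the main obstacle is the reachability claim in the forward direction: deducing from ``unique minimal class'' that $\mathcal{I}_{\min}$ is reached from \emph{every} state. This is automatic for finite posets but in general requires ruling out infinite strictly descending chains $\mathcal{J}_{0}\succ\mathcal{J}_{1}\succ\cdots$ within the component (these occur, for instance, for the pure-birth network $\emptyset\to S_{1}$, which indeed has no minimal class, so the phenomenon is real). I would therefore argue that the reaction-network structure forces the descending chain condition on each endorsed set, guaranteeing that every down-set has a minimal element; this is the point where the combinatorial geometry of the stoichiometry $\Xi$, rather than soft Markov-chain arguments, must enter.
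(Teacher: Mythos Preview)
Your argument follows the same overall strategy as the paper: in both directions one reduces to a single reference point (a point of $\mathcal{I}_{\min}$ in the forward direction, an atom of $\nu$ in the reverse) and then uses finiteness of each $O_n$ together with continuity of $t\mapsto\mathbb{P}_x(X_t=z)$ to obtain the uniform lower bounds.

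The concern you raise in your final paragraph---that deducing reachability of $\mathcal{I}_{\min}$ from \emph{every} state requires ruling out infinite strictly descending chains---is a legitimate observation, but the paper does not address it either. In the forward direction the paper simply asserts that ``any $x\in O_{n}\subset E$ has a path to $\mathcal I_{\min}$'' and proceeds; the same claim is invoked elsewhere (for instance in the proof of Proposition~\ref{lem1}) without further justification. So there is no separate combinatorial-geometric well-foundedness argument in the paper for you to reconstruct; at the paper's level of rigor the step is treated as immediate from the existence of a unique minimal class.

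Your reverse direction is somewhat more streamlined than the paper's. The paper handles uniqueness and existence of the minimal class by two separate contradiction arguments: for uniqueness it supposes two minimal classes and shows, via \eqref{secondone} and then \eqref{firstone}, that both must contain a common atom of $\nu$; for existence it argues that an infinite strictly descending chain of classes would, via \eqref{secondone} and the finiteness of $O_{n_0}$, force some class meeting $O_{n_0}$ to recur and hence create a cycle in the acyclic graph $\mathcal D$. Your version---show every $x\in E$ reaches the atom $y^*$, then argue directly that the class $\mathcal{J}$ of $y^*$ is minimal (anything strictly below it lies in $E$, hence reaches $y^*$, hence equals $\mathcal{J}$) and that any minimal class must coincide with $\mathcal{J}$---dispatches both points at once.
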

\begin{proof}
We first prove that Assumption \ref{class} implies the existence of such constants and probability measure. Let $\mathcal I_{\min}$ be the unique minimal irreducible class contained in $E$. Set
\begin{align*}
n_0=\inf\{n\in \mathbb{N}\colon O_n\cap \mathcal I_{\min}\neq \emptyset\}<\infty.
\end{align*}
Pick $z\in \mathcal I_{\min}\cap O_{n_0}$ arbitrarily and let $\nu=\delta_z$. Pick arbitrary $\theta_0,\theta_1>0$ and let
\begin{align*}
a_1=\inf_{s\in [\theta_0,\theta_0+\theta_1], x\in O_{n_0}}\mathbb{P}_x(X_s=z).
\end{align*}
Note that for all $n\in \mathbb{N}$ the set $O_{n}$ is finite and any $x\in O_{n}\subset E$ has a path to $\mathcal I_{\min}$ and thus, by irreducibility, to $z$. By continuity of $\mathbb{P}_x(X_\cdot=z)$, we conclude that $a_1>0$ and choosing $s_n=\theta_0>0$ for all $n$, the desired holds.

\medskip
For the reverse direction, we first prove uniqueness of the minimal irreducible class in the endorsed set $E$. Suppose for contradiction that $\mathcal{I}_i\neq \mathcal{I}_j$ are irreducible minimal classes in $E$. Let $x_1\in \mathcal{I}_i$ and $x_2\in \mathcal{I}_j$. Then there is a path $x_1\mapsto y_1$ and a path $x_2\mapsto y_2$ with $y_1,y_2\in O_{n_0}$. Indeed, if $\mathcal{I}_\ell\cap O_{n_0}\neq\emptyset$ for $l=i,j$ we may simply choose $y_\ell=x_\ell$, $\ell=1,2$. Otherwise, $x_1,x_2$ are in some $O_{n_1}$ and $O_{n_2}$ respectively with $n_1,n_2>n_0$, hence by (\ref{secondone}), there exist paths as described. By (\ref{firstone}), there exist $\theta_0,\theta_1,a_1>0$ and a probability measure $\nu$ on $E$ such that, for all $y\in O_{n_0}$ and all $s\in [\theta_1,\theta_1+\theta_0]$,
\begin{align*}
\mathbb{P}_y(X_s\in \cdot) \geq a_1\nu.
\end{align*}
As $\nu$ is a probability measure on a countable space, there exists some $z\in E$ such that $\nu(\{z\})>0$. But then $\mathbb{P}_y(X_s=z)>0$ for all $y\in O_{n_0}$. We conclude that there exist paths $y_1\mapsto z$ and $y_2\mapsto z$. By minimality of $\mathcal{I}_i$ and $\mathcal{I}_j$, we conclude that $z\in \mathcal{I}_i\cap \mathcal{I}_j$ which is a contradiction. This proves the uniqueness in Assumption \ref{class}(i).

\medskip
We now prove existence. Suppose for contradiction that no minimal class exists. Then, for all $\mathcal{I}\subseteq E$ there exists $\mathcal{J}\subseteq E$ such that $\mathcal{I}\to \mathcal{J}$. Repeating the argument results in an infinite path
\begin{align}\label{infinitepath}
\mathcal{J}_1\to \mathcal{J}_2 \to  \mathcal{J}_3\to \dots
\end{align}
In the case where there are only finitely many irreducible classes in $E$, there must exist an $i\in \mathbb{N}$ such that $\mathcal{J}_1\to \mathcal{J}_i=\mathcal{J}_1$ which contradicts the lack of cycles in $\mathcal{D}$.

\medskip
Suppose therefore that there are infinitely many classes. Given $n_0$, the set $O_{n_0}$ is finite. Thus it intersects at most finitely many irreducible classes. We conclude that in the infinite path (\ref{infinitepath}), there are infinitely many $i\in \mathbb{N}$ such that
\begin{align*}
\mathcal{I}_i\cap O_{n_0}=\emptyset, \qquad \mathcal{I}_i\cap O_{n_i}\neq \emptyset,
\end{align*}
for some $n_i> n_0$. However, by (\ref{secondone}) each of these classes have a path to $O_{n_0}$. This implies the existence of at least one irreducible class intersecting $O_{n_0}$ which appears more than once in the infinite path (\ref{infinitepath}). This creates a cycle in $\mathcal{D}$ which is a contradiction.
\end{proof}

\begin{lem}
Under Assumpstions \ref{class}-\ref{ass1}, for all $\lambda>0$, there exists $n\geq 1$ such that
\begin{align}\label{eq4}
\sup_{x\in E}\mathbb{E}_x\left(e^{\lambda(\tau_{n}\wedge\tau_A)}\right)<\infty.
\end{align}
\end{lem}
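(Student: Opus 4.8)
The plan is to reduce everything to the uniform first-moment bound already established in Proposition~\ref{black}. Writing $W(x)=\langle v,x\rangle$, the super-linear drift furnished by Assumption~\ref{ass1}(ii) (with $\eta>0$) gives, for every fixed $t>0$, the finite quantity
\begin{align*}
\kappa(t):=\sup_{x\in E}\mathbb{E}_x W(X_t)<\infty.
\end{align*}
This ``coming down from infinity'' estimate is the substantive input, and the remainder is a standard hitting-time argument built on top of it. Throughout I would write $\sigma_n=\tau_n\wedge\tau_A$, which is exactly the first hitting time of the fixed set $O_n\cup A$.

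First I would turn the moment bound into a one-step tail estimate. If $\sigma_n>t$ then the process has neither entered $O_n$ nor been absorbed by time $t$, so $X_t\in E$ with $\langle v,X_t\rangle>n$; hence $\{\sigma_n>t\}\subseteq\{W(X_t)>n\}$. Since $W\geq0$, Markov's inequality yields, uniformly in $x\in E$,
\begin{align*}
\mathbb{P}_x(\sigma_n>t)\leq\mathbb{P}_x\big(W(X_t)>n\big)\leq\frac{\mathbb{E}_x W(X_t)}{n}\leq\frac{\kappa(t)}{n}.
\end{align*}

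Next I would iterate this over the deterministic grid $\{kt:k\in\mathbb{N}_0\}$ by the Markov property. As $\sigma_n$ is the hitting time of the fixed set $O_n\cup A$, on $\{\sigma_n>kt\}$ the state $X_{kt}$ lies in $E\setminus O_n$, and the time remaining to reach $O_n\cup A$ is the same functional of the process restarted at $X_{kt}$. Conditioning on $\mathcal{F}_{kt}$ and applying the previous display to $X_{kt}$ gives
\begin{align*}
\mathbb{P}_x(\sigma_n>(k+1)t)\leq\frac{\kappa(t)}{n}\,\mathbb{P}_x(\sigma_n>kt),
\end{align*}
whence, setting $\theta=\kappa(t)/n$, we obtain $\mathbb{P}_x(\sigma_n>kt)\leq\theta^{k}$ for all $k\geq0$, uniformly in $x\in E$.

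Finally, given the prescribed $\lambda>0$, I would fix any $t>0$ and choose $n$ so large that $\theta=\kappa(t)/n<e^{-\lambda t}$, which is possible precisely because $\kappa(t)<\infty$. Decomposing the expectation over the intervals $[kt,(k+1)t)$ and inserting the tail bound produces a convergent geometric series,
\begin{align*}
\sup_{x\in E}\mathbb{E}_x\big(e^{\lambda\sigma_n}\big)\leq 1+e^{\lambda t}\sum_{k=0}^{\infty}\big(e^{\lambda t}\theta\big)^{k}<\infty,
\end{align*}
the series converging exactly because $e^{\lambda t}\theta<1$. Since $\lambda>0$ was arbitrary and $n$ was selected afterwards, this establishes~\eqref{eq4}. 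The only genuinely delicate points are the uniform finiteness of $\kappa(t)$---already handled in Proposition~\ref{black}, where the choice $\eta>0$ is essential---and the verification that $n$ may be taken large enough to defeat the factor $e^{\lambda t}$ for the given $\lambda$; both become automatic once the super-linear drift is in hand.
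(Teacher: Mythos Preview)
Your proof is correct and follows essentially the same strategy as the paper: use the uniform first-moment bound from Proposition~\ref{black} together with the inclusion $\{\sigma_n>t\}\subseteq\{\langle v,X_t\rangle>n\}$ to obtain a uniform one-step tail estimate, iterate via the Markov property to get geometric decay, and sum a geometric series. Your use of Markov's inequality to read off the explicit bound $\kappa(t)/n$ is slightly cleaner than the paper's extraction of the same tail bound via the layer-cake identity and a contradiction argument, but the two arguments are otherwise the same.
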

\begin{proof}
It follows from Proposition \ref{black} that $\sup_{x\in E}\mathbb{E}_x\langle v,X_t\rangle< M$ for all $t>0$ and some constant $M>0$. Let $\tau_{n,A}=\tau_{n}\wedge \tau_{A}$. Then, for $\epsilon>0$
\begin{align*}
\mathbb{E}_x\langle v,X_\epsilon\rangle=\sum_{n=0}^\infty \mathbb{P}_x(\langle v,X_\epsilon\rangle>n)\geq \sum_{n=0}^\infty \mathbb{P}_x(\tau_{n,A}>\epsilon).
\end{align*}
\noindent
Suppose for contradiction that $\mathbb{P}_x(\tau_{n,A}>\epsilon)$ does not converge uniformly in $x$ to 0 as $n\to \infty$. Then for any $\delta>0$, $\sup_{x\in E}\mathbb{P}_x(\tau_{n,A}>\epsilon)>\delta$ for infinitely many $n\in\mathbb{N}$. Thus, choosing $\delta=\epsilon$, there exists a sequence $(x_i,n_i)_{i\geq 1}$ with $\langle v,x_i \rangle>n_i$ such that $\mathbb{P}_{x_i}(\tau_{n_i,A}>\epsilon)>\epsilon$ and $n_i>n_{i-1}$. But then, noting that  $\mathbb{P}_x(\tau_{n-1,A}>\epsilon)>\mathbb{P}_x(\tau_{n,A}>\epsilon)$ for all $n\in \mathbb{N}$, we obtain
\begin{align*}
\mathbb{E}_{x_i}\langle v,X_\epsilon\rangle\geq \sum_{n=0}^\infty \mathbb{P}_{x_i}(\tau_{n,A}>\epsilon)\geq \sum_{n=0}^{n_i} \mathbb{P}_{x_i}(\tau_{n,A}>\epsilon)\geq n_i \mathbb{P}_{x_i}(\tau_{n_i,A}>\epsilon)>n_i\epsilon.
\end{align*}
Letting $i\to \infty$ we would have $M>\lim_{i\to \infty}\mathbb{E}_{x_i}\langle v,X_\epsilon\rangle>\lim_{i\to \infty} n_i\epsilon=\infty$,
which is a contradiction. We conclude that $\sup_{x\in E}\mathbb{P}_x(\tau_{n,A}>\epsilon)\leq\epsilon$ for $n\geq N(\epsilon)$.

\medskip
Now, given $t>0$ we can choose $0<\epsilon< 1$ such that $t=p\epsilon$ for some $p\in \mathbb{N}$. Applying the Markov property we get, for $n\geq N(\epsilon)$,
\begin{align*}
&\mathbb{P}_x(\tau_{n,A}\geq t)=\mathbb{P}_x(\tau_{n,A}\geq p\epsilon)\\
&=\sum_{y\in D}\mathbb{P}_x(\tau_{n,A}>p\epsilon\,|\, \tau_{n,A}>(p-1)\epsilon, X_{(p-1)\epsilon}=y)\mathbb{P}_x(\tau_{n,A}>(p-1)\epsilon, X_{(p-1)\epsilon} =y)\\
&=\sum_{y\in D}\mathbb{P}_y(\tau_{n,A}>\epsilon)\mathbb{P}_x(\tau_{n,A}>(p-1)\epsilon, X_{(p-1)\epsilon} =y)\leq \epsilon \,\mathbb{P}_x(\tau_{n,A}>(p-1)\epsilon)\leq \epsilon^p,
\end{align*}
hence we conclude, as $t< p$, that
\begin{align*}
\sup_{x\in E}\mathbb{P}_x(\tau_{n,A}\geq t)\leq \epsilon^t.
\end{align*}
Since $e^{\lambda (\tau_{n}\wedge \tau_A)}$ is non-negative, by choosing $\epsilon<e^{-\lambda}<1$ we get
\begin{align*}
\sup_{x\in E}\mathbb{E}_x&\left(e^{\lambda (\tau_{n}\wedge \tau_A)}\right)=\sup_{x\in E}\int_{0}^\infty \mathbb{P}_x\left(e^{\lambda (\tau_{n}\wedge \tau_{A})}\geq t\right)\,dt\leq 1+\sup_{x\in E}\int_{1}^\infty \mathbb{P}_x\left(\tau_{n,A}\geq \frac{\ln t}{\lambda}\right)\,dt\\
&\quad\leq 1+\int_1^\infty \epsilon^{\ln t/\lambda}\,dt=1+\int_0^\infty (e\epsilon^{1/\lambda})^u\,du=1+\frac{1}{1+\ln(\epsilon)/\lambda}<\infty,
\end{align*}
thus (\ref{eq4}) holds as desired.
\end{proof}
\begin{lem}\label{nr3}
For all $n\geq 0$, there exists a constant $C_n$ such that, for all $t\geq 0$,
\begin{align*}
\sup_{x\in O_n}\mathbb{P}_x(t<\tau_A) \leq C_n \inf_{x\in O_n}\mathbb{P}_x(t<\tau_A).
\end{align*}
Further, with $V$ from Definition \ref{defi1}, there exist constants $r_0,p_0>0$ such that for $n$ sufficiently large,
\begin{align*}
\mathbb{P}_x(r_0<\tau_A)\leq p_0 V(x), \qquad \text{for all } x\in E\backslash O_n.
\end{align*}
\end{lem}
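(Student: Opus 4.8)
The plan is to deduce both inequalities from a comparison with a single reference state. Write $u_t(x)=\mathbb{P}_x(t<\tau_A)$; this is non-increasing in $t$, and the Markov property at a deterministic time $\delta$ gives the elementary regeneration bound
\[
u_t(a)\ge u_{t+\delta}(a)\ge \mathbb{P}_a(X_\delta=b)\,u_t(b),\qquad a,b\in E,
\]
so that any state from which $b$ is reachable survives at least as well as $b$. I would fix a reference point $z$ in the unique minimal class $\mathcal{I}_{\min}\subseteq E$ with $\nu(\{z\})>0$, where $n_0,\theta_0,\theta_1,a_1,\nu$ are furnished by Lemma \ref{first} (in its construction one may take $\nu=\delta_z$), and set $b:=a_1\nu(\{z\})>0$. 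Since $z\in O_{n_0}$, applying \eqref{firstone} at $s=\theta_0$ to $w=z$ gives the self-return estimate $\mathbb{P}_z(X_{\theta_0}=z)\ge b$, whence by iteration $u_t(z)\ge b\,u_{t-\theta_0}(z)$ and therefore
\[
u_{t-s}(z)\le b^{-1}e^{\lambda' s}\,u_t(z),\qquad 0\le s\le t,\qquad \lambda':=\theta_0^{-1}\log(1/b).
\]
The same iteration yields $u_t(z)\ge b\,e^{-\lambda' t}$. These bounds control how fast the survival probability of $z$ can decrease over a bounded horizon and will be the engine of the upper bound.

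For the lower half of the first assertion I combine the two conclusions of Lemma \ref{first}: by \eqref{secondone} there is $s_n$ with $\kappa_n:=\inf_{x\in O_n}\mathbb{P}_x(X_{s_n}\in O_{n_0})>0$, and \eqref{firstone} at $s=\theta_0$ gives $\mathbb{P}_w(X_{\theta_0}=z)\ge b$ for all $w\in O_{n_0}$. Chapman--Kolmogorov then yields $\mathbb{P}_x(X_{s_n+\theta_0}=z)\ge b\,\kappa_n=:\rho_n>0$ for every $x\in O_n$, and the regeneration bound with $a=x$, $b=z$, $\delta=s_n+\theta_0$ gives $u_t(x)\ge\rho_n\,u_t(z)$, hence $\inf_{x\in O_n}u_t(x)\ge\rho_n\,u_t(z)$ for all $t\ge0$.

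The upper half is the crux. Let $H_z=\inf\{t\ge0:X_t=z\}$ and decompose $\{t<\tau_A\}$ according to whether $z$ has been hit; on $\{H_z\le t\}$ one has $H_z<\tau_A$, so the strong Markov property at $H_z$ together with $\{t<\tau_A,\,H_z>t\}\subseteq\{H_z\wedge\tau_A>t\}$ gives
\[
u_t(x)\le \mathbb{E}_x\!\left[\mathbbm{1}_{\{H_z\le t\}}\,u_{t-H_z}(z)\right]+\mathbb{P}_x\!\left(H_z\wedge\tau_A>t\right).
\]
Inserting the decrease estimate for $u_{t-H_z}(z)$ and noting that on $\{H_z\le t<\tau_A\}$ we have $H_z\wedge\tau_A=H_z$ turns the first term into at most $b^{-1}u_t(z)\,\mathbb{E}_x e^{\lambda'(H_z\wedge\tau_A)}$, while $u_t(z)\ge b\,e^{-\lambda' t}$ lets the second term be absorbed into $u_t(z)$ as well. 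Thus the whole argument reduces to the single quantitative input
\[
\sup_{x\in E}\mathbb{E}_x\,e^{\lambda'(H_z\wedge\tau_A)}<\infty,
\]
after which $u_t(x)\le C'\,u_t(z)$ for a constant $C'$ independent of $x$; combined with the lower half this gives $\sup_{O_n}u_t\le (C'/\rho_n)\inf_{O_n}u_t$, i.e.\ $C_n=C'/\rho_n$. Establishing this uniform exponential moment for reaching $z$ (or being absorbed) is the step I expect to be hardest. I would obtain it by chaining the ``coming down from infinity'' estimate of the preceding lemma — choosing the compact $O_m$ so that $\sup_x\mathbb{E}_x e^{\lambda(\tau_m\wedge\tau_A)}<\infty$ at a rate $\lambda$ large enough to dominate $\lambda'$, consistent with Proposition \ref{black} and Assumption \ref{ass1} — with a renewal/geometric argument on the excursions between successive visits to $O_m$, each of which reaches $z$ (through $O_{n_0}$, via the minorization \eqref{firstone}) with a fixed positive probability in bounded time. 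The delicate point is precisely to verify that the effective rate surviving this chaining remains at least $\lambda'$.

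Finally, the second assertion is immediate. With $\alpha>1$ as in Lemma \ref{lem2}, every $x\in E\setminus O_n$ satisfies $\langle v,x\rangle\ge 1$, so $V(x)=\sum_{j=1}^{\langle v,x\rangle}j^{-\alpha}\ge 1$; since $\mathbb{P}_x(r_0<\tau_A)\le 1$ for any $r_0>0$, the bound $\mathbb{P}_x(r_0<\tau_A)\le p_0\,V(x)$ holds with $p_0=1$, uniformly in $n$. If the intended application requires a specific value of $r_0$, the same reachability estimates as above produce it without difficulty.
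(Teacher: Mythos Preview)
Your route differs markedly from the paper's. For the first assertion the paper gives essentially a one-line argument: since $O_n$ is finite, $\sup_{x\in O_n}\mathbb{P}_x(t<\tau_A)\le 1$ and $\inf_{x\in O_n}\mathbb{P}_x(t<\tau_A)>0$, so one ``may simply choose'' $C_n$ to be their ratio. As written that $C_n$ depends on $t$, and the paper does not argue that the ratio stays bounded as $t\to\infty$; so the uniformity in $t$---which you correctly identify as the real content of the statement---is not addressed there. Your Harnack mechanism (anchor at $z\in\mathcal I_{\min}$, self-regenerate to get $u_{t-s}(z)\le b^{-1}e^{\lambda' s}u_t(z)$ and $u_t(z)\ge b\,e^{-\lambda' t}$, lower-bound $u_t(x)$ via reachability of $z$ from $O_n$, upper-bound by decomposing at $H_z$) is the standard and correct way to obtain a $t$-free constant, and your reduction to $\sup_x\mathbb{E}_x e^{\lambda'(H_z\wedge\tau_A)}<\infty$ is valid. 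For the second assertion your proof and the paper's coincide: $V\ge 1$ on $E$, so $p_0=1$ (the paper writes $p_0=1/\inf_E V$) works for any $r_0>0$.

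The step you flag as delicate is, however, a genuine residual gap and not merely bookkeeping. The preceding lemma gives, for each $\lambda$, some $m=m(\lambda)$ with $\sup_x\mathbb{E}_x e^{\lambda(\tau_m\wedge\tau_A)}\le K$; chaining with a geometric renewal on $O_m$ produces a contraction condition of the form $K\,e^{\lambda' T_m}(1-p_m)<1$. But $p_m=\inf_{x\in O_m}\mathbb{P}_x(X_{T_m}=z)$ may degenerate as $m\to\infty$, and if instead you anchor the ``try'' at $O_{n_0}$ with success probability $b$ and cycle length $\theta_0$, then $e^{\lambda'\theta_0}=1/b$ by your own definition of $\lambda'$, so the factor becomes $(1-b)/b$, which need not be $<1$. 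Thus the naive renewal does not close at the required rate $\lambda'$; an additional idea is needed---for instance, boosting the success probability by iterating the minorization over several $\theta_0$-steps before invoking the geometric bound, or applying the Lyapunov/``coming down'' machinery directly to the process killed at $\{z\}\cup A$ to show its survival decays strictly faster than $e^{-\lambda' t}$. You should make this step explicit rather than leave it as a sketch.
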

\begin{proof}
Let $n\geq0$ be given. If $O_n$ is empty, then the statement is vacuously true for any $C_n$. If $O_n$ is non-empty, then as $\sup_{x\in O_n}\mathbb{P}_x(t<\tau_A)\leq 1$ and $\inf_{x\in O_n}\mathbb{P}_x(t<\tau_A)>0$ since $O_n$ is finite, we may simply choose
\begin{align*}
C_n=\frac{\sup_{x\in O_n}\mathbb{P}_x(t<\tau_A)}{\inf_{x\in O_n}\mathbb{P}_x(t<\tau_A)}<\infty.
\end{align*}
To see the second claim, note that letting
\begin{align*}
p_0=\frac{1}{\inf_{x\in E}V(x)}>0,
\end{align*}
we have for any $r_0>0$,
\begin{align*}
\mathbb{P}_x(r_0<\tau_A)\leq 1=p_0 \inf_{x\in E} V(x)\leq p_0 V(x),
\end{align*}
for all $x\in E$, and the desired holds.
\end{proof}

\section{The Main Result}
We are now ready to state and prove the main result of the paper. In the case where only a single endorsed set, $E$, is considered, we find that the unique QSD hereon is in fact globally attracting in the space of probability measures on $E$.

\begin{theo}\label{res}
A reaction network $(\mathcal{N},\lambda)$ with associated stochastic process $(X_t\colon t\geq 0)$ on $D=E\sqcup A$, with $A\neq \emptyset$ and satisfying Assumption \ref{class}-\ref{ass1}, admits a unique quasi-stationary distribution $\nu$. Further, there exist constants $C,\gamma>0$ such that, for all probability measures $\mu$ on $E$,
\begin{align*}
\|\mathbb{P}_\mu(X_t\in \cdot\,|\, t<\tau_A)-\nu\|_{TV}\leq Ce^{-\gamma t}, \qquad t\geq0.
\end{align*} 
\end{theo}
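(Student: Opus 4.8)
The plan is to deduce the statement from the abstract equivalence of \cite[Theorem 2.1]{Expo}: it suffices to exhibit a single probability measure $\nu$ on $E$ for which conditions (A1) and (A2) hold, because the exponential convergence bound then follows for \emph{every} initial distribution $\mu$. To produce such a $\nu$ I would invoke the Foster-Lyapunov criterion of \cite[Corollary 2.8]{cd}, whose hypotheses have been assembled piece by piece throughout Section 4. At the level of Theorem \ref{res} itself the argument is thus largely a matter of checking that each ingredient required by \cite{cd} has indeed been established, and then reading off the conclusion.

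Concretely, I would match the ingredients as follows. First, regular absorption $\tau_A=\lim_{n\to\infty}T_n<\infty$ $\mathbb{P}_x$-a.s., together with the uniform moment bound $\sup_{x\in E}\mathbb{E}_x\langle v,X_t\rangle<\infty$, is furnished by Proposition \ref{black}; this guarantees that the conditioned process is well defined. Next, Lemma \ref{lem3}, resting on the generator estimates of Lemma \ref{lem2}, shows that the pair $(V,\varphi)$ of Definition \ref{defi1} is an admissible couple in the sense of Definition \ref{couple}, which is the Lyapunov input of \cite{cd}. The local minorization \eqref{firstone} on the compact set $O_{n_0}$ and the uniform return estimate \eqref{secondone} are supplied by Lemma \ref{first}, and this is precisely where Assumption \ref{class}(i) enters, since the unique minimal irreducible class $\mathcal I_{\min}$ serves as the support of the minorizing measure. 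The uniform exponential integrability \eqref{eq4} of the exit/absorption time controls survival at large values of $\langle v,\cdot\rangle$, while Lemma \ref{nr3} provides both the comparison of survival probabilities on each $O_n$ and the bound $\mathbb{P}_x(r_0<\tau_A)\le p_0 V(x)$ off $O_n$. With all of these in hand, \cite[Corollary 2.8]{cd} yields conditions (A1) and (A2) for a suitable $\nu$.

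Having (A1) and (A2), \cite[Theorem 2.1]{Expo} returns constants $C,\gamma>0$ with $\|\mathbb{P}_\mu(X_t\in\cdot\,|\,t<\tau_A)-\nu\|_{TV}\le Ce^{-\gamma t}$ for every probability measure $\mu$ on $E$, which is simultaneously the claimed exponential convergence and the assertion that $\nu$ is globally attracting; existence of the QSD is then immediate. Uniqueness I would obtain by a short argument: if $\nu'$ is any QSD, then taking $\mu=\nu'$ and using $\mathbb{P}_{\nu'}(X_t\in\cdot\,|\,t<\tau_A)=\nu'$ forces $\|\nu'-\nu\|_{TV}\le Ce^{-\gamma t}$ for all $t\ge 0$, so that $\nu'=\nu$. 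I expect the real difficulty to lie not in this final assembly but upstream, in matching the reaction-network quantities $d_v,d^v$ and the explicit couple $(V,\varphi)$ to the exact hypothesis list of \cite[Corollary 2.8]{cd} — exactly the content of the preceding lemmas — with the delicate point throughout being to keep every constant uniform in the initial condition, since it is that uniformity that ultimately powers the global attraction.
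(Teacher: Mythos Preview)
Your proposal is correct and follows essentially the same route as the paper: both assemble Proposition \ref{black}, Lemma \ref{lem2}, Lemma \ref{lem3}, Lemma \ref{first}, the exponential-integrability lemma \eqref{eq4}, and Lemma \ref{nr3}, and then apply \cite[Corollary 2.8]{cd}. The paper's proof is a terse two-sentence invocation of precisely these ingredients; you additionally unpack the passage through (A1)--(A2) of \cite[Theorem 2.1]{Expo} and give an explicit uniqueness argument, but this extra detail is already folded into the conclusion of \cite[Corollary 2.8]{cd}, so no genuine difference in strategy.
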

\begin{proof}
By Proposition \ref{black}, the process is regularly absorbed, and by Lemma \ref{lem3} the pair $(V,\varphi)$ given in Definition \ref{defi1} is admissible, satisfying conditions (a) and (b) from Lemma \ref{lem2}. The result now follows from Lemma \ref{first}-\ref{nr3} together with \cite[Cor. 2.8]{cd}.
\end{proof}

\begin{kor}\label{koret}
Let $\nu$ be the unique quasi-stationary distribution on $E$ and $\mathcal{I}_{\text{min}}$ the unique minimal class of $E$. Then $\text{supp }\nu=\mathcal{I}_{\text{min}}$.
\end{kor}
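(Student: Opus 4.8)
The plan is to establish the two inclusions $\mathcal I_{\text{min}}\subseteq\supp\nu$ and $\supp\nu\subseteq\mathcal I_{\text{min}}$ separately, using the quasi-stationarity identity for the first and the global attraction from Theorem \ref{res} for the second. Throughout I would lean on two structural facts from the preceding framework. First, every state of $E$ has a path to $\mathcal I_{\text{min}}$; this is implicit in Assumption \ref{class}(i) and made explicit in the proof of Lemma \ref{first} (via \eqref{secondone} and \eqref{firstone}). Second, by minimality of $\mathcal I_{\text{min}}$ in the poset $(\mathscr I,\preceq)$, there is no path from $\mathcal I_{\text{min}}$ to any state of $E\setminus\mathcal I_{\text{min}}$: if $z_0\in\mathcal I_{\text{min}}$ satisfied $z_0\mapsto y$ with $y$ in a class $\mathcal I_y\neq\mathcal I_{\text{min}}$, then $\mathcal I_{\text{min}}\mapsto\mathcal I_y$, i.e.\ $\mathcal I_y\prec\mathcal I_{\text{min}}$, contradicting minimality.

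For $\mathcal I_{\text{min}}\subseteq\supp\nu$, fix $z\in\mathcal I_{\text{min}}$. Since $\nu$ is a probability measure there is $w_0\in E$ with $\nu(w_0)>0$, and since $w_0\mapsto\mathcal I_{\text{min}}$ and $\mathcal I_{\text{min}}$ is irreducible we have $w_0\mapsto z$, so $\mathbb P_{w_0}(X_t=z)>0$ for some $t\geq 0$. Because $z\in E$ and $A$ is absorbing, the event $\{X_t=z\}$ already forces $t<\tau_A$, so the quasi-stationarity identity gives
\[
\nu(\{z\})=\frac{\mathbb P_\nu(X_t=z,\,t<\tau_A)}{\mathbb P_\nu(t<\tau_A)}\geq\frac{\nu(w_0)\,\mathbb P_{w_0}(X_t=z)}{\mathbb P_\nu(t<\tau_A)}>0,
\]
using that $\mathbb P_\nu(t<\tau_A)\in(0,1]$. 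Hence $\nu(\{z\})>0$ for every $z\in\mathcal I_{\text{min}}$.

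For the reverse inclusion I would exploit that Theorem \ref{res} provides convergence in $\|\cdot\|_{TV}$ from every initial law, in particular from $\delta_{z_0}$ with $z_0\in\mathcal I_{\text{min}}$ fixed; since convergence in total variation dominates the mass of any singleton, this yields $\mathbb P_{z_0}(X_t=y\mid t<\tau_A)\to\nu(\{y\})$ for each $y\in E$. The conditional law is well defined because $\mathbb P_{z_0}(t<\tau_A)\geq\mathbb P_{z_0}(X_t=z_0)>0$, while for $y\in E\setminus\mathcal I_{\text{min}}$ the second structural fact gives $\mathbb P_{z_0}(X_t=y)=0$ for all $t$. Letting $t\to\infty$ forces $\nu(\{y\})=0$, so $\supp\nu\subseteq\mathcal I_{\text{min}}$, and combined with the first inclusion this yields $\supp\nu=\mathcal I_{\text{min}}$.

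I expect the only real subtlety to be the careful handling of the two reachability facts rather than any analytic estimate: one must verify that the identity $\mathbb P_w(X_t=z,\,t<\tau_A)=\mathbb P_w(X_t=z)$ is legitimate (it holds precisely because $A$ is absorbing and $z\in E$), and that the framework genuinely guarantees a path from every endorsed state to $\mathcal I_{\text{min}}$, for which I would cite the construction in the proof of Lemma \ref{first}. Notably, no spectral or Perron--Frobenius input is required, since the global attraction established in Theorem \ref{res} does the work that a decay-rate comparison between $\mathcal I_{\text{min}}$ and the transient classes would otherwise demand.
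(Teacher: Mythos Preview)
Your proof is correct and follows essentially the same strategy as the paper: global attraction from Theorem \ref{res} for $\supp\nu\subseteq\mathcal I_{\text{min}}$, and the quasi-stationarity identity for the reverse inclusion. The only cosmetic difference is that the paper establishes the inclusions in the opposite order and, for $\mathcal I_{\text{min}}\subseteq\supp\nu$, invokes the formula $e^{-\theta t}\nu(y)=\sum_{x}\nu(x)\mathbb P_x(X_t=y)$ together with irreducibility of $\mathcal I_{\text{min}}$ (having already shown $\supp\nu\subseteq\mathcal I_{\text{min}}$), rather than your reachability fact that every endorsed state has a path to $\mathcal I_{\text{min}}$.
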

\begin{proof}
Suppose for contradiction that $\text{supp } \nu\not\subseteq \mathcal{I}_{\text{min}}$. Then there exists a point $y\in \mathcal{I}\neq \mathcal{I}_{\text{min}}$ for which $\nu(\{y\})>0$, where $\mathcal{I}$ is an irreducible class of $E$. As $\nu$ is globally attracting in $\mathcal{P}(E)$, the space of probability distributions on $E$, it follows by definition that 
\begin{align*}
\lim_{t\to \infty}\mathbb{P}_\mu(X_t\in B\,|\, t<\tau_A)=\nu(B),
\end{align*}
for any $\mu\in \mathcal{P}(E)$ and any measurable set $B\subseteq E$. In particular, letting $\mu=\delta_z$ with $z\in \mathcal{I}_{\text{min}}$ yields, by minimality
\begin{align*}
0=\lim_{t\to \infty}\mathbb{P}_{\delta_z}(X_t=y\,|\, t<\tau_A)=\nu(\{y\})>0,
\end{align*}
which is a contradiction. We conclude that $\text{supp } \nu\subseteq \mathcal{I}_{\text{min}}$. In particular, there exists $x'\in \mathcal{I}_{\text{min}}$ such that $\nu(\{x'\})>0$. Further, since $\mathcal{I}_{\text{min}}$ is irreducible, $\mathbb{P}_x(X_t=y)>0$ for all $x,y\in\mathcal{I}_{\text{min}}$. As $\nu$ is a QSD, it follows from \cite[p. 48]{QSD} that
\begin{align*}
e^{-\theta t}\nu(y)=\sum_{x\in E}\nu(x)\mathbb{P}_x(X_t=y)=\sum_{x\in\mathcal{I}_{\text{min}}}\nu(x)\mathbb{P}_x(X_t=y)>\nu(x')\mathbb{P}_{x'}(X_t=y)>0.
\end{align*}
for some $\theta>0$ and all $t\geq0$, $y\in \mathcal{I}_{\text{min}}$. Consequently, $\text{supp } \nu\supseteq \mathcal{I}_{\text{min}}$ which in turn implies $\text{supp } \nu=\mathcal{I}_{\text{min}}$ as desired.
\end{proof}

We now examine the general holistic setting with state space $D=D_E\cup D_A$, where $D_E$ consists of possibly several endorsed sets, each with or without a corresponding non-empty absorbing set. As one might in practice not have complete information about the starting-point of the process, one may in general not know exactly which endorsed set the process evolves in. However, one may have a qualitative guess in the form of an initial distribution, $\mu$ on $D_E$.

\medskip
The following theorem shows that we may consider the problem of finding a unique QSD on each endorsed set independently and then piecing these together to form a unique limiting measure up to a choice of the initial distribution, $\mu$, on $D_E$.

\begin{theo}\label{samlet}
Let $(\mathcal{N},\lambda)$ be a reaction network and $U=E_1\cup\dots\cup E_{m}$ a finite union of endorsed sets. If Assumption \ref{class}-\ref{ass1}  are satisfied, then the associated stochastic process $(X_t\colon t\geq 0)$ admits a unique quasi-stationary distribution, $\nu_n$, on each endorsed set $E_n\subseteq U$, $n=1,\ldots,m$. Furthermore, given an initial distribution $\mu$ on $U$, the measure $\nu_\mu$ defined by
\begin{align*}
\nu_\mu(B)=\sum_{n=1}^m\mu(E_n) \nu_n(B\cap E_n),
\end{align*}
is well defined and there exist constants $C,\gamma>0$ such that,
\begin{align*}
\|\mathbb{P}_\mu(X_t\in \cdot\,|\, t<\tau_A)-\nu_{\mu}\|_{TV}\leq Ce^{-\gamma t}, \qquad t\geq0.
\end{align*} 
\end{theo}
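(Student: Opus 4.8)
The plan is to exploit that distinct endorsed sets are dynamically isolated, reduce to the single–set result of Theorem \ref{res}, and glue the resulting conditional laws together; the whole difficulty will sit in the gluing.

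\textbf{Step 1 (isolation of endorsed sets).} First I would record that a process started in $E_n$ remains in $D_n=E_n\sqcup A_n$ for all time, and in particular can never reach a different endorsed set $E_{n'}$, $n'\neq n$. Indeed, by Definition \ref{statespace} a jump out of an endorsed class $\mathcal I\in\mathscr V_n$ either stays in $\mathcal I$ or lands in a class $\mathcal J\prec\mathcal I$; if $\mathcal J\in\mathscr I_E$ then $(\mathcal I,\mathcal J)\in\mathscr E$ is an edge of $\mathcal D[\mathscr I_E]$, so $\mathcal J$ lies in the same connected component $\mathscr V_n$ and hence in $E_n$, while if $\mathcal J\in\mathscr I_A$ then $E_n\mapsto\mathcal J$ forces $\mathcal J\subseteq A_n$. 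Thus on the event $\{X_0\in E_n\}$ the process coincides with the absorbed process on $D_n$, and $\tau_A=\tau_{A_n}$ there.

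\textbf{Step 2 (single–set convergence).} Each $E_n\subseteq U$ satisfies Assumptions \ref{class}--\ref{ass1}, so Theorem \ref{res} (or Theorem \ref{noabs} when $A_n=\emptyset$, with $\nu_n$ the stationary law) yields a unique $\nu_n$ and constants $C_n,\gamma_n>0$ with
\begin{align*}
\left\|\mathbb P_{\mu_n}(X_t\in\cdot\mid t<\tau_{A_n})-\nu_n\right\|_{TV}\leq C_n e^{-\gamma_n t},\qquad t\geq0,
\end{align*}
for every law $\mu_n$ on $E_n$. Taking $C=\max_n C_n$ and $\gamma=\min_n\gamma_n$ over the finitely many $n\leq m$ makes these bounds uniform.

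\textbf{Step 3 (gluing).} Since $U=\bigsqcup_n E_n$, decompose $\mu=\sum_n\mu(E_n)\mu_n$ with $\mu_n(\cdot)=\mu(\cdot\cap E_n)/\mu(E_n)$ over the indices with $\mu(E_n)>0$. By Step 1 and the law of total probability, for measurable $B\subseteq U$,
\begin{align*}
\mathbb P_\mu(X_t\in B\mid t<\tau_A)=\frac{\sum_n\mu(E_n)S_n(t)\,\mathbb P_{\mu_n}(X_t\in B\cap E_n\mid t<\tau_{A_n})}{\sum_n\mu(E_n)S_n(t)},\qquad S_n(t)=\mathbb P_{\mu_n}(t<\tau_{A_n}).
\end{align*}
By Step 2 each inner factor equals $\nu_n(B\cap E_n)+O(e^{-\gamma t})$, so the right–hand side is $\sum_n w_n(t)\,\nu_n(B\cap E_n)+O(e^{-\gamma t})$ with $w_n(t)=\mu(E_n)S_n(t)\big/\sum_m\mu(E_m)S_m(t)$; matching this to $\nu_\mu(B)=\sum_n\mu(E_n)\nu_n(B\cap E_n)$ amounts to showing $w_n(t)\to\mu(E_n)$.

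\textbf{Main obstacle.} The crux is exactly this control of the weights, i.e.\ the comparison of the survival probabilities $S_n(t)$ across different endorsed sets. Within a fixed $E_n$ the survival lower bound accompanying Theorem \ref{res} (condition (A2)) pins $S_n(t)$ down up to a constant and a single exponential rate, but a priori the blocks carry \emph{distinct} decay rates, in which case $w_n(t)$ concentrates on the slowest–decaying set rather than tending to $\mu(E_n)$. Hence the step to be argued with greatest care — and on which the identification of the limit with $\nu_\mu$ rests — is that the $S_n(t)$ admit a common exponential decay rate with comparable leading constants; granting this, $w_n(t)\to\mu(E_n)$ exponentially fast, and combining it with the uniform estimate of Step 2 and the triangle inequality delivers the claimed bound $\|\mathbb P_\mu(X_t\in\cdot\mid t<\tau_A)-\nu_\mu\|_{TV}\leq Ce^{-\gamma t}$.
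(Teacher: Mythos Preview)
Your Steps 1--2 reproduce the paper's argument essentially verbatim: apply Theorem \ref{res} (or the stationary case when $A_n=\emptyset$) on each $E_n$ to obtain $\nu_n$ with constants $C_n,\gamma_n$, then set $C=\max_n C_n$ and $\gamma=\min_n\gamma_n$. The paper's proof stops right there, with ``This proves the desired''; it performs no Step 3 at all.

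You have correctly identified a genuine gap, and it is one the paper's own proof does not fill. The global conditional law factors as you write,
\[
\mathbb P_\mu(X_t\in B\mid t<\tau_A)=\sum_n w_n(t)\,\mathbb P_{\mu_n}(X_t\in B\cap E_n\mid t<\tau_{A_n}),\qquad w_n(t)=\frac{\mu(E_n)S_n(t)}{\sum_k\mu(E_k)S_k(t)},
\]
and Theorem \ref{res} controls only the inner conditional laws, not the weights. Each QSD $\nu_n$ carries its own absorption parameter $\theta_n$ with $\mathbb P_{\nu_n}(t<\tau_{A_n})=e^{-\theta_n t}$, and there is no mechanism in the paper forcing the $\theta_n$ to coincide across distinct endorsed sets (they depend on the dynamics restricted to $E_n$, which are entirely decoupled). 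When the $\theta_n$ differ, $w_n(t)$ concentrates on the set with the smallest $\theta_n$, so the limit of the conditional law is the corresponding $\nu_n$, not the mixture $\nu_\mu=\sum_n\mu(E_n)\nu_n$. If moreover some $A_n=\emptyset$ while another $A_{n'}\neq\emptyset$, then $S_n(t)\equiv1$ and the limit is $\nu_n$ regardless of $\mu(E_{n'})$. In short, the obstacle you name is real: the statement as written does not follow from the per-block estimates alone, and the paper's proof, by simply taking $C=\max_n C_n$ and $\gamma=\min_n\gamma_n$, silently assumes what would need to be shown.
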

\begin{proof}
That $(X_t\colon t\geq 0)$ admits a unique quasi-stationary distribution, $\nu_n$, on each endorsed set $E_n\subseteq U$ with $A_n\neq \emptyset$ follows from Theorem \ref{res}. Now, suppose $A_n=\emptyset$ for some $n$. Then the definitions of quasi-stationary distribution and stationary distribution on $E_n$ are equal, $\tau_{A_n}=\infty$ and all the previous proofs go through without changes. In particular, it follows from Corollary \ref{koret} that any stationary distribution, $\pi_n$, is supported by the unique minimal irreducible class as well. Furthermore, by Theorem \ref{res}, for any $\mu\in \mathcal{P}(E_n)$ there exists $C_1,\gamma_1>0$ such that for any $B\subseteq E_n$
\begin{align*}
\|\mathbb{P}_{\mu}(X_t\in B)-\pi_n(B)\|_{TV}&=\|\mathbb{P}_{\mu}(X_t\in B\,|\, t<\tau_A)-\nu_n(B)\|_{TV}\leq C_1e^{-\gamma_1 t}.
\end{align*}
Clearly, $\nu_{\mu}$ is a well defined probability measure. Finally, we may let
\begin{align*}
C=\max_{n\in \{1,\dots,m\}}{C_n}>0, \qquad \gamma=\min_{n\in \{1,\dots,m\}}\gamma_n>0,
\end{align*}
where $C_n,\gamma_n>0$ for $n\in \{1,\dots,m\}$ are given through Theorem \ref{res} and the above argument, for each class $E_n$. This proves the desired.
\end{proof}

\begin{kor}
A one-species reaction network $(\mathcal{N},\lambda)$ has $gcd(\xi_1,\dots, \xi_r)<\infty$ endorsed sets. If in addition the kinetics $\lambda$ is stochastic mass-action, up to a choice of the initial distribution $\mu$ on $D_E$, there is a unique QSD on $D_E$ if the highest order of a reaction is at least 2 and
\begin{align*}
\sum_{k\in \mathcal{R}^\ast}\lambda_k(x)\xi_k<0, \qquad \text{for } \langle v,x\rangle \text{ sufficiently large},
\end{align*}
where $\mathcal{R}^\ast$ is the set of reactions of highest order.
\end{kor}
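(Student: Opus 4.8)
The plan is to deduce the statement from Theorem~\ref{samlet}: I would count the endorsed sets via Proposition~\ref{state}, verify Assumptions~\ref{class}--\ref{ass1} for the one-species process, and then apply Theorem~\ref{samlet} with the finite union $U=D_E$.

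For the count, irreflexivity of $\mathcal{R}$ gives $\xi_k\neq 0$ for each $k$, so in dimension one $\rank\Xi=1=d$ and the lattice generated by $\Xi$ is $\gcd(\xi_1,\dots,\xi_r)\,\mathbb{Z}$, whose basis matrix has absolute determinant $\gcd(\xi_1,\dots,\xi_r)$. By Proposition~\ref{state}(i), together with the extension of its argument to all of $D_E$ in dimension one noted after Corollary~\ref{dim2}, the number of endorsed sets is finite and equal to $\gcd(\xi_1,\dots,\xi_r)$.

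The quantitative heart is Assumption~\ref{ass1}, for which I would take $v=1$, so that $\langle v,x\rangle=x$ and each slice $\{x\in E:\langle v,x\rangle=n\}$ is a single point. Since any absorbing set lies in $\{x:x\ngeq M\}$ and is hence bounded when $d=1$, for all large $n$ one has $n+\xi_k\in E$ for every $k$; thus $d^v(n)=0$ and
\begin{align*}
d_v(n)=-\sum_{k=1}^{r}\lambda_k(n)\,\xi_k .
\end{align*}
Under mass-action kinetics each $\lambda_k(n)=\alpha_k\,n!/(n-y_k)!$ is a polynomial in $n$ of degree equal to the order of reaction $k$, so $\sum_k\lambda_k(n)\xi_k$ is a polynomial of degree $q$, the highest order, with leading coefficient $\sum_{k\in\mathcal{R}^\ast}\alpha_k\xi_k$. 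The hypothesis forces this coefficient to be negative, so $d_v(n)\sim c\,n^{q}$ for some $c>0$. Assumption~\ref{ass1}(i) then reduces to $d_v(n)\geq 0$, which holds for large $n$, while Assumption~\ref{ass1}(ii) holds for every $\eta\in(0,q-1)$ because $d_v(n)/n^{1+\eta}\sim c\,n^{\,q-1-\eta}\to\infty$. This is precisely the step that needs $q\geq 2$: it is what makes the admissible range for $\eta$ nonempty and strictly positive, mirroring the paper's principle that reactions of order at least two are required to ``come down from infinity'' and hence for a unique QSD.

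It remains to verify Assumption~\ref{class} and assemble the measure. The total order on $\mathbb{N}_0$ is what I would exploit: a downward reaction from $x_0=\min E_n$ lands below $\min E_n$, so by the component structure of $\mathcal{D}[\mathscr{I}_E]$ it cannot land in another endorsed set and must enter $A$, giving Assumption~\ref{class}(ii); and the negative leading drift forces every sufficiently large state to descend, so that all recurrence collapses onto the bottom of $E_n$. Granting Assumptions~\ref{class}--\ref{ass1}, Theorem~\ref{samlet} applied to $U=D_E$ furnishes a unique QSD $\nu_n$ on each of the $\gcd(\xi_1,\dots,\xi_r)$ endorsed sets and the assembled measure $\nu_\mu$ depending on the initial law $\mu$, which is exactly the asserted uniqueness ``up to a choice of $\mu$''. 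I expect the genuinely delicate point to be the uniqueness in Assumption~\ref{class}(i): one must rule out several minimal classes, i.e.\ several sinks of the directed acyclic graph $\mathcal{D}[\mathscr{I}_E]$ inside a single connected component, which requires turning the downward drift into a statement that the recurrent states form a single bottom class. The polynomial-degree computation for Assumption~\ref{ass1}, by contrast, is routine once $v=1$ is fixed.
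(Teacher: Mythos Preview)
Your proposal follows essentially the same route as the paper: count the endorsed sets via the $\gcd$/lattice argument from Proposition~\ref{state} and Corollary~\ref{dim2}, take $v=1$ so that $d^v(n)=0$ for all large $n$ while $d_v(n)=-\sum_k\lambda_k(n)\xi_k$ is a polynomial of degree $q\ge 2$ with positive leading coefficient, and then invoke Theorem~\ref{samlet}. Your treatment is in fact more careful than the paper's own proof, which records the same $d_v,d^v$ computation tersely and then appeals to Theorem~\ref{samlet} without ever checking Assumption~\ref{class}; so your identification of Assumption~\ref{class}(i) as the one genuinely unverified point is a fair observation about the argument rather than a shortcoming of your proposal relative to the original.
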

\begin{proof}
Note first that by Corollary \ref{dim2}, any one-species reaction network has a finite number of endorsed sets, as $\rank \,\Xi=1$. In fact, the number is given by $\text{gcd}(\xi_1,\dots,\xi_r)$. Indeed, as $\text{gcd}(\xi_1,\dots,\xi_r)\leq |\xi_k|$ it follows that $\mathcal{P}(\text{gcd}(\xi_1,\dots,\xi_r))\cap \mathcal{L}(\Xi)=\emptyset$ where $\mathcal{P}(B)$ is the fundamental parallelepiped generated by $B$ and $\mathcal{L}(\Xi)$ is the lattice generated by $\Xi$ as introduced in the proof of Proposition \ref{state}. Therefore, $b_1=\text{gcd}(\xi_1,\dots,\xi_r)$ is a basis for $\mathcal{L}(\Xi)$ and we conclude that the number of endorsed sets is $|\det(b_1)|=b_1$ \cite{lattice}.

As there is only one species, we may without loss of generality take $v=1$. It follows that $d_v(n)=-\sum_{k\in \mathcal{R}} \lambda_k(n)\xi_k>0$ for $n$ sufficiently large and $d_v(n)=\mathcal{O}(n^a)$ with $a\geq 2$ by assumption. Furthermore, $d^v(n)=0$ for $n$ sufficiently large. Thus Assumption \ref{ass1} is satisfied. We infer the desired by Theorem \ref{samlet}.
\end{proof}

\section{Examples}
In this section, the main theorems and their applicability are illustrated through a series of examples. In particular, we show explicitly how the results of \cite{cd} are extended.

\begin{eks}
\vspace{0.2cm}
\begin{multicols}{2}
\begin{align*}
mS_1\overset{\alpha_1}{\to} \emptyset
\end{align*}
\columnbreak

\phantom.
\vspace{0.1cm}
\begin{center}
\vspace{-0.2cm}
 \def\svgwidth{140pt}
\begingroup%
  \makeatletter%
  \providecommand\color[2][]{%
    \errmessage{(Inkscape) Color is used for the text in Inkscape, but the package 'color.sty' is not loaded}%
    \renewcommand\color[2][]{}%
  }%
  \providecommand\transparent[1]{%
    \errmessage{(Inkscape) Transparency is used (non-zero) for the text in Inkscape, but the package 'transparent.sty' is not loaded}%
    \renewcommand\transparent[1]{}%
  }%
  \providecommand\rotatebox[2]{#2}%
  \ifx\svgwidth\undefined%
    \setlength{\unitlength}{114.10339499bp}%
    \ifx\svgscale\undefined%
      \relax%
    \else%
      \setlength{\unitlength}{\unitlength * \real{\svgscale}}%
    \fi%
  \else%
    \setlength{\unitlength}{\svgwidth}%
  \fi%
  \global\let\svgwidth\undefined%
  \global\let\svgscale\undefined%
  \makeatother%
  \begin{picture}(1,0.12708088)%
    \put(0,0){\includegraphics[width=\unitlength,page=1]{eks11grid.pdf}}%
    \put(0.85212661,0.01033448){\color[rgb]{0,0,0}\makebox(0,0)[lb]{\smash{$S_1$}}}%
    \put(0,0){\includegraphics[width=\unitlength,page=2]{eks11grid.pdf}}%
  \end{picture}%
\endgroup%

\end{center}
\end{multicols}
\vspace{-0.2cm}
\noindent
As discussed in Section 2, the endorsed sets and corresponding absorbing sets are
\begin{align*}
E_i=\{i+pm-1 \,|\, p\in \mathbb{N}\}, \qquad A_i=\{i-1\}, \qquad \text{for } i=1,\dots,m,
\end{align*}
respectively. These endorsed sets are evidently not irreducible. However, $\{i+m-1\}$ is the unique minimal irreducible class in $E_i$ from which one may jump directly to $A_i$, thus Assumption \ref{class} is satisfied. Further, assuming mass-action kinetics,
\begin{align*}
d_v(n)&=\alpha_1 n(n-1)\dots (n-m+1)m\mathbbm{1}_E(n-m)=\mathcal{O}(n^m),\\
d^v(n)&=\mathcal{O}(1),
\end{align*}
for $n$ sufficiently large, hence we conclude by Theorem \ref{samlet} that there exists a unique QSD on each $E_i$ if $m\geq 2$. Further, in this case, for any initial distribution, $\mu$, on the full set of endorsed states, $D_E=\{m,m+1,\dots\}$, the measure $\mathbb{P}_\mu(X_t\in \cdot\,|\, t<\tau_A)$ tends to $\nu_\mu$ exponentially fast for $t\to \infty$.
\end{eks}

\begin{eks}[Lotka-Volterra]
\phantom{.}
\begin{multicols}{2}
\begin{align*}
S_1&\overset{\alpha_1}{\rightarrow} 2S_1\\
S_1+S_2& \overset{\alpha_2}{\rightarrow} 2S_2\\
S_2&\overset{\alpha_3}{\to} \emptyset
\end{align*}
\columnbreak
\vspace{0.4cm}
\begin{center}
 \def\svgwidth{130pt}
\begingroup%
  \makeatletter%
  \providecommand\color[2][]{%
    \errmessage{(Inkscape) Color is used for the text in Inkscape, but the package 'color.sty' is not loaded}%
    \renewcommand\color[2][]{}%
  }%
  \providecommand\transparent[1]{%
    \errmessage{(Inkscape) Transparency is used (non-zero) for the text in Inkscape, but the package 'transparent.sty' is not loaded}%
    \renewcommand\transparent[1]{}%
  }%
  \providecommand\rotatebox[2]{#2}%
  \ifx\svgwidth\undefined%
    \setlength{\unitlength}{116.48636904bp}%
    \ifx\svgscale\undefined%
      \relax%
    \else%
      \setlength{\unitlength}{\unitlength * \real{\svgscale}}%
    \fi%
  \else%
    \setlength{\unitlength}{\svgwidth}%
  \fi%
  \global\let\svgwidth\undefined%
  \global\let\svgscale\undefined%
  \makeatother%
  \begin{picture}(1,0.97954289)%
    \put(0,0){\includegraphics[width=\unitlength,page=1]{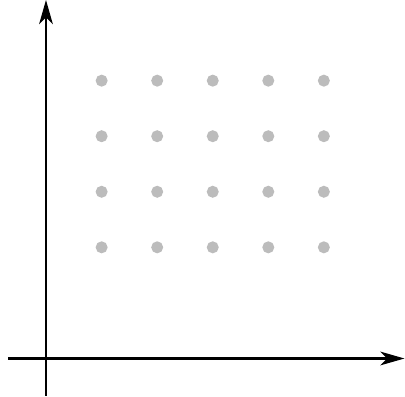}}%
    \put(-0.00356298,0.84865846){\color[rgb]{0,0,0}\makebox(0,0)[lb]{\smash{$S_2$}}}%
    \put(0.85515167,0.01422607){\color[rgb]{0,0,0}\makebox(0,0)[lb]{\smash{$S_1$}}}%
    \put(0,0){\includegraphics[width=\unitlength,page=2]{eks6grid.pdf}}%
  \end{picture}%
\endgroup%

\end{center}
\end{multicols}
\vspace{-0.2cm}
\noindent
The Lotka-Volterra system describing competitive and predator-prey interactions has been of interest for approximately a century \cite{Lotka,Volterra}. In the stochastic description of the model, we find $R=\mathbb{N}^2$ hence it follows that the state space can be divided into the endorsed and absorbing sets given by
\begin{align*}
D_E=E=\mathbb{N}^2, \qquad D_A=A=\mathbb{N}_0^2\backslash \mathbb{N}^2,
\end{align*}
respectively. Using mass-action kinetics, it follows that for $v=(v_1,v_2)\in \mathbb{N}^2$,
\begin{align*}
d_v(n)&=-\max_{x\in E, \langle v,x\rangle=n}\sum_{k=1}^r\lambda_k(x)\langle v,\xi_k\rangle\mathbbm{1}_{E}(x+\xi_k)\\
&=-\max_{x\in E, \langle v,x\rangle=n}(\alpha_1v_1 x_1+(v_2-v_1)\alpha_2 x_1x_2\mathbbm{1}_{\{2,3,\dots\}}(x_1)-v_2\alpha_3 x_2\mathbbm{1}_{\{2,3,\dots\}}(x_2)).
\end{align*}
Letting $(v_1,v_2)=(\ell,1)$ yields $d_v(n)=-\ell\alpha_1(n-1)+(\ell-1)\alpha_2(n-1)$. Thus, choosing $\ell$ sufficiently large, $d_v(n)=\mathcal{O}(n)$ and $d_v(n)>0$ for $n$ sufficiently large, provided that $\alpha_2>\alpha_1$. Note also that $\max_{k\in \mathcal{R}_A}\langle v,\xi_k \rangle<0$ hence by Proposition \ref{lem1} the process is $\mathbb{P}_x$-a.s. absorbed for all $x\in E$ if $\alpha_2>\alpha_1$. However, $d^v(n)=\mathcal{O}(n^2)$, hence Assumption \ref{ass1}(i) is not satisfied, and one can not apply Theorem \ref{res}.

\medskip
Using generalized mass-action \cite{generalized} for the same standard Lotka-Volterra network, we may obtain a different result. Suppose for example that
\begin{align*}
\lambda_1(x)=\alpha_1 x_1x_2, \quad \lambda_2(x)=\alpha_2 x_1^4 x_2^2, \quad \lambda_3(x)=\alpha_3 x_2^3.
\end{align*}
Choosing $v=(2,1)$, say, we find
\begin{align*}
d_v(n)=-\max_{x\in E, \langle v,x\rangle=n} \left(2\alpha_1 x_1x_2-\alpha_2 x_1^4 x_2^2\mathbbm{1}_{\{2,3,\dots\}}(x_2)-\alpha_3 x_2^3\mathbbm{1}_{\{2,3,\dots\}}(x_2)\right)=\mathcal{O}(n^3),
\end{align*}
and likewise
\begin{align*}
d^v(n)=\max_{x\in E, \langle v,x\rangle=n} n\left(\alpha_2 x_1^4 x_2^2 \mathbbm{1}_{\{1\}}(x_1)+\alpha_3 x_2^3 \mathbbm{1}_{\{1\}}\right)(x_2)=\mathcal{O}(n^3).
\end{align*}
Thus Assumption \ref{ass1} is satisfied. As $E$ is irreducible, Assumption \ref{class} is also satisfied and we conclude by Theorem \ref{res} that there is a unique QSD on $E$.
\begin{multicols}{2}
\begin{align}\label{local}
S_1&\underset{\alpha_2}{\overset{\alpha_1}{\rightleftharpoons}} 2S_1\nonumber\\
S_1+S_2& \underset{\alpha_4}{\overset{\alpha_3}{\rightleftharpoons}} 2S_2\\
S_2&\overset{\alpha_5}{\to} \emptyset\nonumber
\end{align}
\columnbreak
\vspace{0.4cm}
\begin{center}
 \def\svgwidth{130pt}
\begingroup%
  \makeatletter%
  \providecommand\color[2][]{%
    \errmessage{(Inkscape) Color is used for the text in Inkscape, but the package 'color.sty' is not loaded}%
    \renewcommand\color[2][]{}%
  }%
  \providecommand\transparent[1]{%
    \errmessage{(Inkscape) Transparency is used (non-zero) for the text in Inkscape, but the package 'transparent.sty' is not loaded}%
    \renewcommand\transparent[1]{}%
  }%
  \providecommand\rotatebox[2]{#2}%
  \ifx\svgwidth\undefined%
    \setlength{\unitlength}{116.48636904bp}%
    \ifx\svgscale\undefined%
      \relax%
    \else%
      \setlength{\unitlength}{\unitlength * \real{\svgscale}}%
    \fi%
  \else%
    \setlength{\unitlength}{\svgwidth}%
  \fi%
  \global\let\svgwidth\undefined%
  \global\let\svgscale\undefined%
  \makeatother%
  \begin{picture}(1,0.97954289)%
    \put(0,0){\includegraphics[width=\unitlength,page=1]{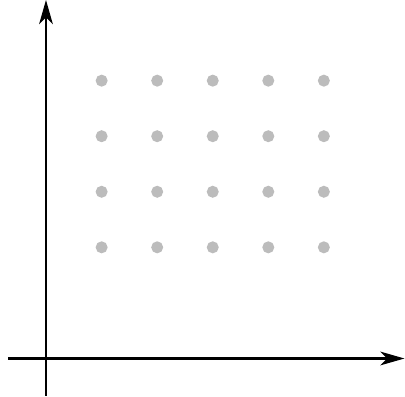}}%
    \put(-0.00356298,0.84865846){\color[rgb]{0,0,0}\makebox(0,0)[lb]{\smash{$S_2$}}}%
    \put(0.85515167,0.01422607){\color[rgb]{0,0,0}\makebox(0,0)[lb]{\smash{$S_1$}}}%
    \put(0,0){\includegraphics[width=\unitlength,page=2]{eks1grid.pdf}}%
  \end{picture}%
\endgroup%

\end{center}
\end{multicols}
\vspace{-0.3cm}
\noindent
Let us now consider the slightly altered version of the original Lotka-Volterra system using mass-action kinetics, obtained by addition of the reactions $2S_1\to S_1$ and $2S_2\to S_1+S_2$. In this case, there is still one endorsed set with corresponding absorbing set given by
\begin{align*}
E=\mathbb{N}_0\times \mathbb{N}\backslash\{(0,1)\}, \qquad A=\mathbb{N}_0^2\backslash E,
\end{align*}
respectively.

For a general $v=(v_1,v_2)\in \mathbb{N}^2$ it follows that $d^{v}(n)=\mathcal{O}(n)$ and \small
\begin{align*}
d_v(n)&=-\max_{x\in E, \langle v,x\rangle=n}\sum_{k=1}^r\lambda_k(x)\langle v,\xi_k\rangle\mathbbm{1}_{E}(x+\xi_k)\\
&=\min_{x\in E, \langle v,x\rangle=n}(-\alpha_1 x_1 v_1 +v_1\alpha_2 x_1(x_1-1)-(v_2-v_1)\alpha_3 x_1x_2\\
&\phantom{etmegetlangtmellemrum}+(v_2-v_1)\alpha_4x_2(x_2-1)+v_2\alpha_5 x_2\mathbbm{1}_{\{2,3,\dots\}}(x_2)).
\end{align*}
\normalsize
We need $v_2>v_1$ for the coefficient of the 4th reaction to be positive. Further, by the second derivative test, $d_v(n)=\mathcal{O}(n^2)$ exactly when
\begin{align*}
4v_1\alpha_2(v_2-v_1)\alpha_4>(v_2-v_1)^2\alpha_3^2.
\end{align*}
The set of possible $v$-vectors, $\mathcal{V}$, is therefore
\begin{align*}
\mathcal{V}=\left\{v\in \mathbb{N}^2\,\bigg|\, v_1<v_2<v_1\left(1+\frac{4\alpha_2\alpha_4}{\alpha_3^2}\right)\right\},
\end{align*}
which is non-empty for any positive reaction rates. A particular choice would be $v=(\ell,\ell+1)$, for $\ell$ sufficiently large. As $E$ is irreducible, Assumption \ref{class} is satisfied and we conclude by Theorem \ref{res} that the modified Lotka-Volterra system has a unique QSD on $E$ for any reaction rates.
\begin{figure}[h]
\begin{center}
\includegraphics[width=2.4in]{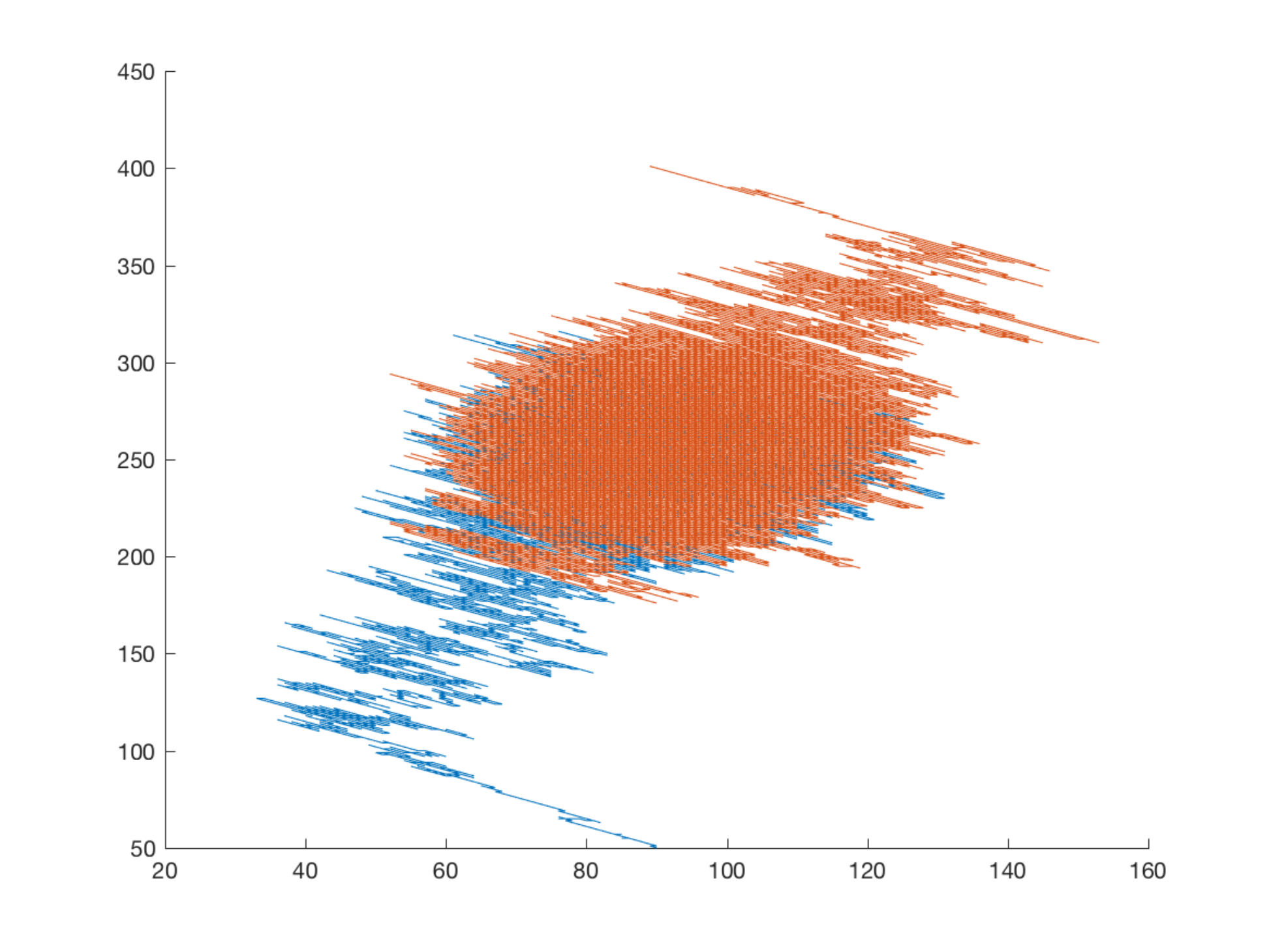}
\includegraphics[width=2.4in]{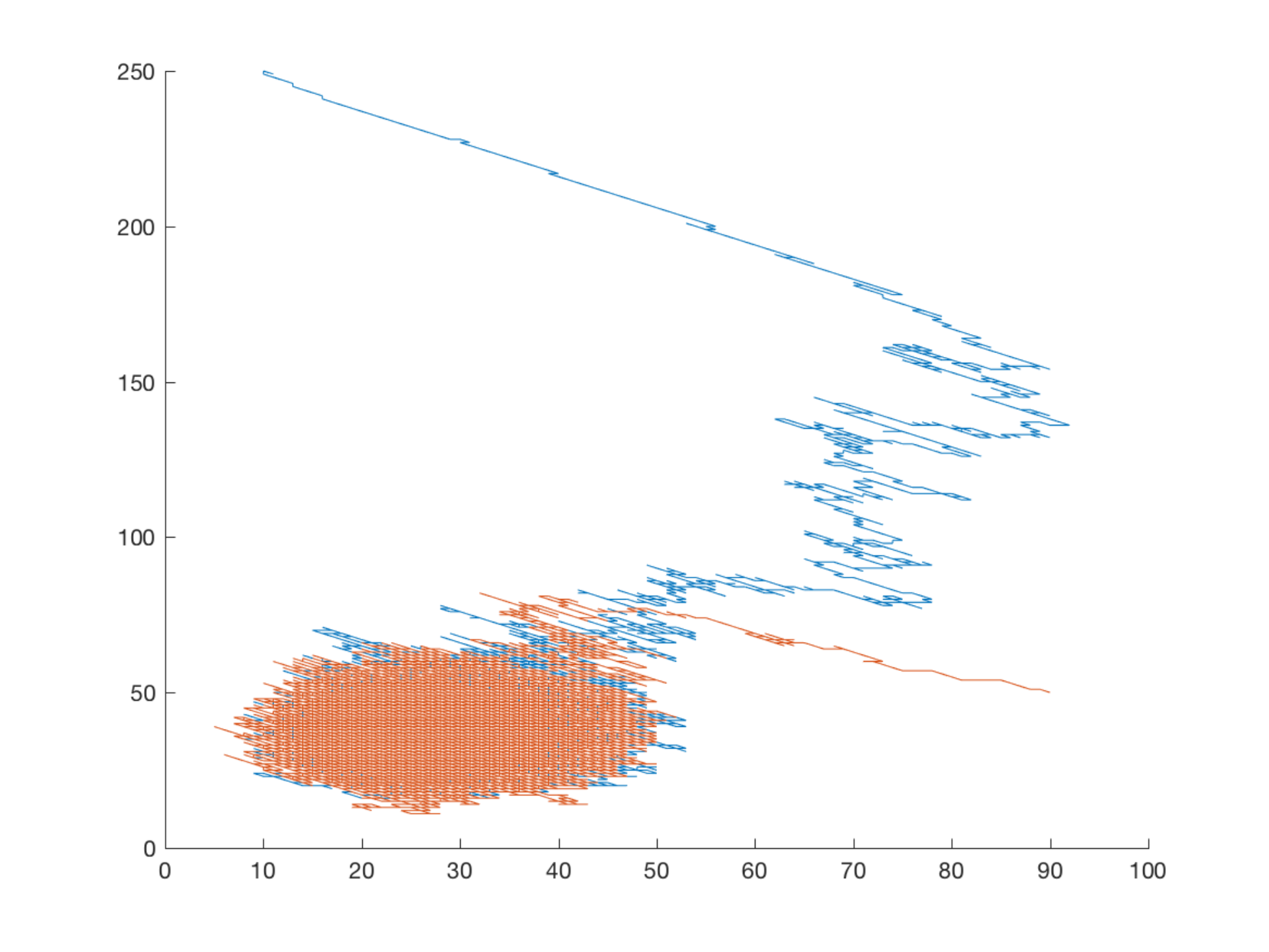}
\caption{Two realizations of the process $(X_t\colon t\geq 0)$, associated with the reaction network (\ref{local}), with differing $X_0$. Left: $\alpha_1=30,\alpha_2=0.3,\alpha_3=2,\alpha_4=0.7,\alpha_5=1$. Right: $\alpha_1=20,\alpha_2=0.7,\alpha_3=1,\alpha_4=0.7,\alpha_5=1$.}
\end{center}
\end{figure}
\end{eks}

\begin{eks}
\begin{multicols}{2}
\begin{align*}
S_1&\overset{\alpha_1}{\to} 3S_1\\
2S_2&\overset{\alpha_2}{\to} \emptyset\\
2S_1+S_2&\overset{\alpha_3}{\to} 2S_2
\end{align*}
\columnbreak
\vspace{0.4cm}
\begin{center}
 \def\svgwidth{130pt}
\begingroup%
  \makeatletter%
  \providecommand\color[2][]{%
    \errmessage{(Inkscape) Color is used for the text in Inkscape, but the package 'color.sty' is not loaded}%
    \renewcommand\color[2][]{}%
  }%
  \providecommand\transparent[1]{%
    \errmessage{(Inkscape) Transparency is used (non-zero) for the text in Inkscape, but the package 'transparent.sty' is not loaded}%
    \renewcommand\transparent[1]{}%
  }%
  \providecommand\rotatebox[2]{#2}%
  \ifx\svgwidth\undefined%
    \setlength{\unitlength}{116.48636904bp}%
    \ifx\svgscale\undefined%
      \relax%
    \else%
      \setlength{\unitlength}{\unitlength * \real{\svgscale}}%
    \fi%
  \else%
    \setlength{\unitlength}{\svgwidth}%
  \fi%
  \global\let\svgwidth\undefined%
  \global\let\svgscale\undefined%
  \makeatother%
  \begin{picture}(1,0.97954289)%
    \put(0,0){\includegraphics[width=\unitlength,page=1]{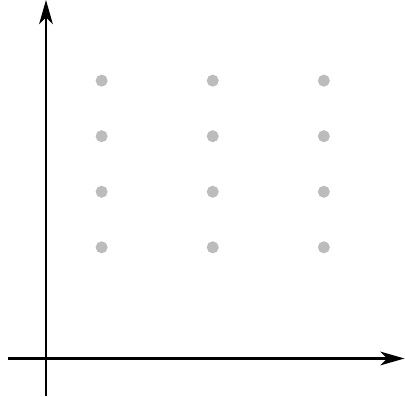}}%
    \put(-0.00356298,0.84865846){\color[rgb]{0,0,0}\makebox(0,0)[lb]{\smash{$S_2$}}}%
    \put(0.84141616,0.01422607){\color[rgb]{0,0,0}\makebox(0,0)[lb]{\smash{$S_1$}}}%
    \put(0,0){\includegraphics[width=\unitlength,page=2]{eks2grid.pdf}}%
  \end{picture}%
\endgroup%

\end{center}
\end{multicols}
\vspace{-0.5cm}
\noindent
There are two endorsed sets given by
\begin{align*}
E_1&=\{x\in \mathbb{N}^2\colon x_1=1 \mod 2\},\\
E_2&=\{x\in \mathbb{N}^2\colon x_1=0 \mod 2\}.
\end{align*}
The corresponding set of absorbing sets are given by
\begin{align*}
A_1&=\{x\in \mathbb{N}_0^d\colon x_2=0, x_1=1 \mod 2\}\\
A_2&=\{x\in \mathbb{N}_0^d\colon x_2=0, x_1=0 \mod 2\}\cup\{x\in \mathbb{N}_0^d\colon x_1=0\}.
\end{align*}
Assuming mass action kinetics, it follows that for a general $v=(v_1,v_2)\in \mathbb{N}^2$ we obtain for $E_1$ and $E_2$ respectively,
\begin{align*}
d_v(n)=-\max_{x\in E, \langle v,x\rangle =n}&\left(2v_1\alpha_1 x_1-2v_2 \alpha_2x_2(x_2-1)\mathbbm{1}_{\{3,4,\dots\}}(x_2)\right.\\
&\left.+(-2v_1+v_2)\alpha_3x_1(x_1-1)x_2\right),\\
d_v(n)=-\max_{x\in E, \langle v,x\rangle =n}&\left(2v_1\alpha_1 x_1-2v_2 \alpha_2x_2(x_2-1)\mathbbm{1}_{\{3,4,\dots\}}(x_2)\right.\\
&\left.+(-2v_1+v_2)\alpha_3x_1(x_1-1)x_2\mathbbm{1}_{\{3,4,\dots\}}(x_1)\right),
\end{align*}
which are both $\mathcal{O}(n^2)$ exactly if $v_2<2v_1$. A particular choice would be $v=(1,1)$. Further,
\begin{align*}
d^{v}(n)=\max_{x\in E, \langle v,x\rangle=n}n\big(\alpha_2 x_2(x_2-1)\mathbbm{1}_{\{2\}}(x_2)+ \alpha_3x_1(x_1-1)x_2 \mathbbm{1}_{\{2\}}(x_1)\big)=2\alpha_3n^2=\mathcal{O}(n^2).
\end{align*}
We conclude that there exists an $0<\eta<1$ such that Assumption \ref{ass1} holds. Since both $E_1$ and $E_2$ are irreducible, Assumption \ref{class} is satisfied hence Theorem \ref{samlet} applies regardless of the rate constants --  there exists a unique QSD, $\nu_n$, on each $E_n$, and given an initial distribution, $\mu$, on $D_E$, the measure $\mathbb{P}_x(X_t\in \cdot\,|\, t<\tau_A)$ approaches
\begin{align*}
\nu_\mu=\mu(E_1)\nu_1(\cdot \cap E_1)+\mu(E_2)\nu_1(\cdot \cap E_2)
\end{align*}
exponentially fast in $t$.

\begin{figure}[h]
\begin{center}
\includegraphics[width=2.9in]{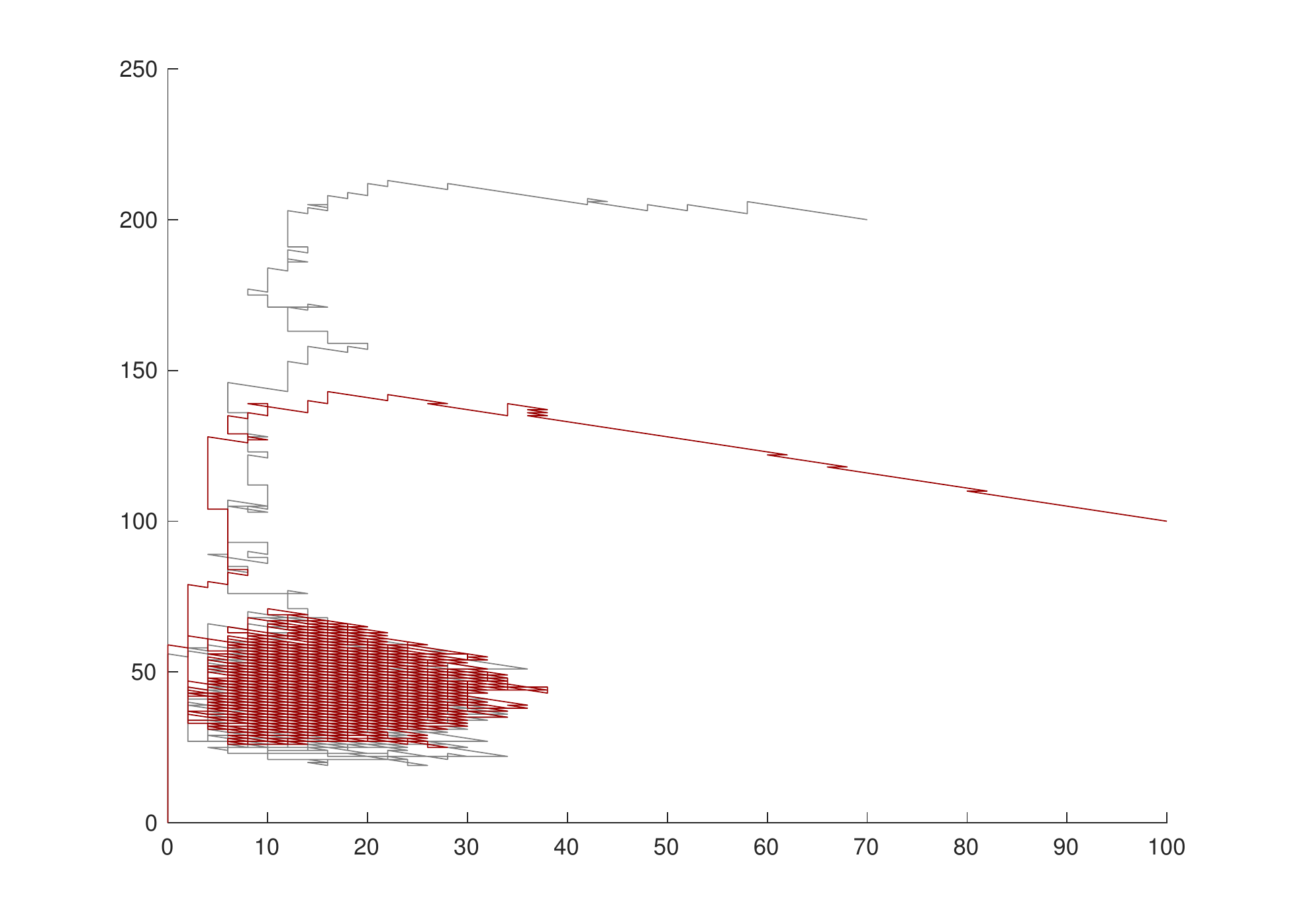}
\includegraphics[width=2.9in]{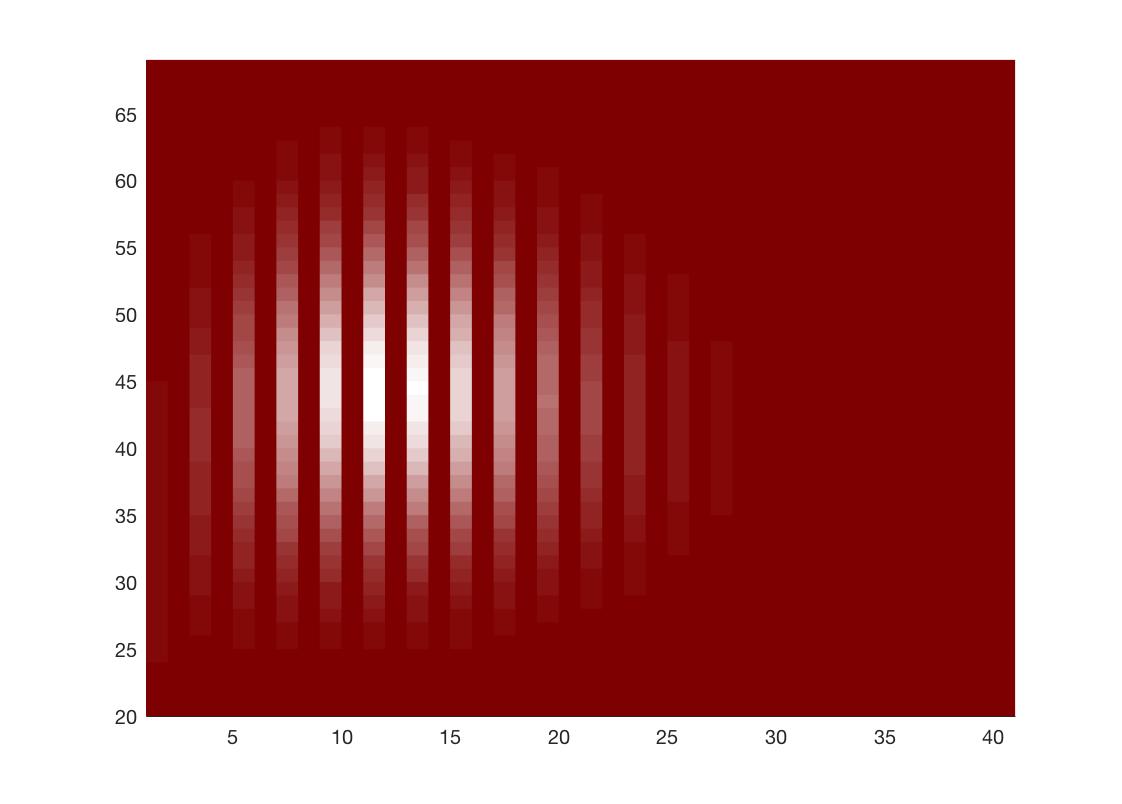}
\caption{Left: Two realizations of $(X_t\colon t\geq0)$ with $\alpha_1=300, \alpha_2=1, \alpha_3=0.5$. $X_0=(70,200)$ in grey and $X_0=(100,100)$ in red. Right: Approximate density of the QSD on one of the two endorsed sets.}
\label{trille}
\end{center}
\end{figure}

Note that if we had modeled the network using deterministic mass action \cite{crnt}, we would have found the attracting\footnote{One can calculate the trace of the Jacobin matrix to be $-2\alpha_1-(2\alpha_1\alpha_2)^{2/3}\alpha_3^{-1/3}<0$, making the fixed point attracting for all parameter values.} fixed point
\begin{align*}
(x,y)=\left(\frac{(2\alpha_1\alpha_2)^{1/3}}{\alpha_3^{2/3}},\frac{\alpha_1^{2/3}}{(2\alpha_2\alpha_3)^{1/3}}\right),
\end{align*}
which seems to lie near the peak of the quasi-stationary distribution for the parameter values used in Figure \ref{trille}. Indeed, we find the fixed point $(13.39,44.81)$ which is a stable spiral.
\end{eks}
\begin{eks}[Birth-death]
\begin{align*}
S_2 \overset{\alpha_{1}}{\leftarrow} S_1&+S_2 \overset{\alpha_{2}}{\rightarrow} 2S_1+S_2\qquad\qquad S_1 \overset{\alpha_{3}}{\leftarrow} S_1+S_2 \overset{\alpha_{4}}{\rightarrow} S_1+2S_2\\
S_1 \overset{\alpha_{5}}{\leftarrow} &2S_1 \overset{\alpha_6}{\rightarrow} 3S_1 \qquad \qquad\qquad\quad\qquad S_2 \overset{\alpha_{7}}{\leftarrow}2S_2 \overset{\alpha_{8}}{\rightarrow} 3S_2
\end{align*}
\begin{center}
 \def\svgwidth{130pt}
\begingroup%
  \makeatletter%
  \providecommand\color[2][]{%
    \errmessage{(Inkscape) Color is used for the text in Inkscape, but the package 'color.sty' is not loaded}%
    \renewcommand\color[2][]{}%
  }%
  \providecommand\transparent[1]{%
    \errmessage{(Inkscape) Transparency is used (non-zero) for the text in Inkscape, but the package 'transparent.sty' is not loaded}%
    \renewcommand\transparent[1]{}%
  }%
  \providecommand\rotatebox[2]{#2}%
  \ifx\svgwidth\undefined%
    \setlength{\unitlength}{116.48636904bp}%
    \ifx\svgscale\undefined%
      \relax%
    \else%
      \setlength{\unitlength}{\unitlength * \real{\svgscale}}%
    \fi%
  \else%
    \setlength{\unitlength}{\svgwidth}%
  \fi%
  \global\let\svgwidth\undefined%
  \global\let\svgscale\undefined%
  \makeatother%
  \begin{picture}(1,0.97954289)%
    \put(0,0){\includegraphics[width=\unitlength,page=1]{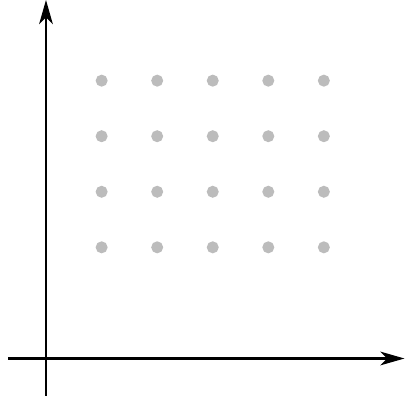}}%
    \put(-0.00356298,0.84865846){\color[rgb]{0,0,0}\makebox(0,0)[lb]{\smash{$S_2$}}}%
    \put(0.85515167,0.01422607){\color[rgb]{0,0,0}\makebox(0,0)[lb]{\smash{$S_1$}}}%
    \put(0,0){\includegraphics[width=\unitlength,page=2]{eks5grid.pdf}}%
  \end{picture}%
\endgroup%

\end{center}
\noindent
The full set of endorsed states is found to be $D_E=\mathbb{N}^2$ which is irreducible, and the full set of absorbing states is $D_A=\mathbb{N}_0^2\backslash \mathbb{N}^2$. Thus Assumption \ref{class} is satisfied. For a general $v\in \mathbb{N}^2$ we have, with $\alpha_5-\alpha_{6}=\rho_1, \alpha_7-\alpha_8=\rho_2, \alpha_{1}-\alpha_{2}=\rho_3$ and $\alpha_{3}-\alpha_{4}=\rho_4$,
\begin{align*}
d_v(n)=\min_{x\in E, \langle v,x\rangle=n} (v_1\rho_3+ v_2\rho_4)x_1x_2-v_1\rho_1x_1(x_1-1)-v_2\rho_2x_2(x_2-1).
\end{align*}
The second derivative test gives the sufficient and necessary criteria for $d_v(n)$ to be $\mathcal{O}(n^2)$,
\begin{align*}
4v_1v_2\rho_1\rho_2-(v_1\rho_3+v_2\rho_4)^2>0.
\end{align*}
Solving for $v_2$ we find the set of possible $v$-vectors,
\small
\begin{align*}
\mathcal{V}=\left\{v\in \mathbb{N}^2\,\bigg|\, \frac{2\rho_1\rho_2-\rho_3\rho_4-2\sqrt{\rho_1^2\rho_2^2-\rho_1\rho_2\rho_3\rho_4}}{\rho_4^2} v_1< v_2 <\frac{2\rho_1\rho_2-\rho_3\rho_4+2\sqrt{\rho_1^2\rho_2^2-\rho_1\rho_2\rho_3\rho_4}}{\rho_4^2} v_1\right\},
\end{align*}
\normalsize
which is non-empty exactly when $\rho_1\rho_2>\rho_3\rho_4$. Choosing parameter values as, say,
\begin{align*}
X_2 \overset{1}{\leftarrow} X_1&+X_2 \overset{3}{\rightarrow} 2X_1+X_2\qquad\qquad X_1 \overset{0.75}{\leftarrow} X_1+X_2 \overset{1}{\rightarrow} X_1+2X_2\\
X_1 \overset{2}{\leftarrow} &2X_1 \overset{1}{\rightarrow} 3X_1 \qquad \qquad\qquad\quad\qquad X_2 \overset{2}{\leftarrow}2X_2 \overset{1}{\rightarrow} 3X_2
\end{align*}
we find
\begin{align*}
\mathcal{V}\approx\{v\in \mathbb{N}^2\,|\, 1.373 v_1<v_2<46.627 v_1\}.
\end{align*}
Note that $v=(1,1)\notin \mathcal{V}$, and Theorem \ref{res} would not be applicable with this choice. We have therefore extended the result of \cite{cd}.
\end{eks}
\newpage
\begin{eks}
\begin{multicols}{2}
\begin{align*}
S_1&\overset{\alpha_1}{\to} 3S_1\\
S_1+3S_2&\overset{\alpha_2}{\to} 2S_2\\
3S_1&\overset{\alpha_3}{\to} 2S_1+3S_2
\end{align*}
\columnbreak
\vspace{0.3cm}
\begin{center}
 \def\svgwidth{125pt}
\begingroup%
  \makeatletter%
  \providecommand\color[2][]{%
    \errmessage{(Inkscape) Color is used for the text in Inkscape, but the package 'color.sty' is not loaded}%
    \renewcommand\color[2][]{}%
  }%
  \providecommand\transparent[1]{%
    \errmessage{(Inkscape) Transparency is used (non-zero) for the text in Inkscape, but the package 'transparent.sty' is not loaded}%
    \renewcommand\transparent[1]{}%
  }%
  \providecommand\rotatebox[2]{#2}%
  \ifx\svgwidth\undefined%
    \setlength{\unitlength}{116.48636904bp}%
    \ifx\svgscale\undefined%
      \relax%
    \else%
      \setlength{\unitlength}{\unitlength * \real{\svgscale}}%
    \fi%
  \else%
    \setlength{\unitlength}{\svgwidth}%
  \fi%
  \global\let\svgwidth\undefined%
  \global\let\svgscale\undefined%
  \makeatother%
  \begin{picture}(1,0.97954289)%
    \put(0,0){\includegraphics[width=\unitlength,page=1]{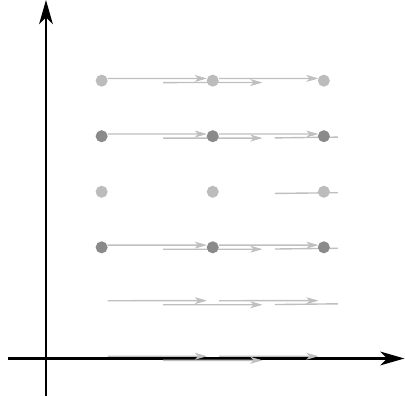}}%
    \put(-0.00356298,0.84865846){\color[rgb]{0,0,0}\makebox(0,0)[lb]{\smash{$S_2$}}}%
    \put(0.84141616,0.01422607){\color[rgb]{0,0,0}\makebox(0,0)[lb]{\smash{$S_1$}}}%
    \put(0,0){\includegraphics[width=\unitlength,page=2]{eks3grid.pdf}}%
  \end{picture}%
\endgroup%

\end{center}
\end{multicols}
\vspace{-0.5cm}
\noindent
The stoichiometric matrix is in this example given by
\begin{align*}
\Xi=\begin{pmatrix}
2 & -1 & -1\\
0 & -1 & 3
\end{pmatrix}.
\end{align*}
It follows that $\rank \Xi=2=d$ hence by Proposition \ref{state}, there is a finite number endorsed sets for $x$ sufficiently large. Indeed, upon inspection, we find two endorsed sets given by
\begin{align*}
E_1&=\{x\in \mathbb{N}^2: x_2\geq 0, x_1+x_2=0 \mod 2\},\\
E_2&=\{x\in \mathbb{N}^2: x_2\geq 0, x_1+x_2=1 \mod 2\},
\end{align*}
and these each have a unique minimal irreducible class hence Assumption \ref{class} is satisfied. Note, however, that the sets $E_1$ and $E_2$ are not irreducible. Here $D_E=\{x\in \mathbb{N}_0^2: x_1\geq 1\}$ and $D_A=\{x\in\mathbb{N}_0^2: x_1=0\}$. Taking $v=(4,1)$, it follows that for both classes
\begin{align*}
d_v(n)&=-\max_{x\in E, \langle v,x\rangle =n}(8\alpha_1 x_1-5 \alpha_2x_1x_2(x_2-1)(x_2-2)\mathbbm{1}_{\{2,3,\dots\}}(x_1)-\alpha_3x_1(x_1-1)(x_1-2)),
\end{align*}
which is $\mathcal{O}(1)$. Furthermore,
\begin{align*}
d^{v}(n)=\max_{x\in E, \langle v,x\rangle=n}n x_1x_2(x_2-1)(x_2-2)\mathbbm{1}_{\{1\}}(x_1)=\mathcal{O}(n^4).
\end{align*}
Thus, Assumption \ref{ass1} is not satisfied and we can not apply Theorem \ref{samlet}.
\end{eks}

\begin{eks}
\begin{multicols}{2}
\begin{align*}
S_1+S_2&\overset{\alpha_1}{\to} \emptyset\\
S_1&\overset{\alpha_2}{\to} 2S_1+S_2\\
2S_1+2S_2&\overset{\alpha_3}{\to} S_1+S_2
\end{align*}
\columnbreak
\vspace{0.4cm}
\begin{center}
 \def\svgwidth{129pt}
\begingroup%
  \makeatletter%
  \providecommand\color[2][]{%
    \errmessage{(Inkscape) Color is used for the text in Inkscape, but the package 'color.sty' is not loaded}%
    \renewcommand\color[2][]{}%
  }%
  \providecommand\transparent[1]{%
    \errmessage{(Inkscape) Transparency is used (non-zero) for the text in Inkscape, but the package 'transparent.sty' is not loaded}%
    \renewcommand\transparent[1]{}%
  }%
  \providecommand\rotatebox[2]{#2}%
  \ifx\svgwidth\undefined%
    \setlength{\unitlength}{116.48636904bp}%
    \ifx\svgscale\undefined%
      \relax%
    \else%
      \setlength{\unitlength}{\unitlength * \real{\svgscale}}%
    \fi%
  \else%
    \setlength{\unitlength}{\svgwidth}%
  \fi%
  \global\let\svgwidth\undefined%
  \global\let\svgscale\undefined%
  \makeatother%
  \begin{picture}(1,0.97954289)%
    \put(0,0){\includegraphics[width=\unitlength,page=1]{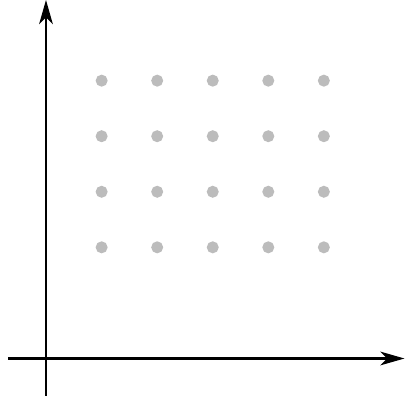}}%
    \put(-0.00356298,0.84865846){\color[rgb]{0,0,0}\makebox(0,0)[lb]{\smash{$S_2$}}}%
    \put(0.85515167,0.01422607){\color[rgb]{0,0,0}\makebox(0,0)[lb]{\smash{$S_1$}}}%
    \put(0,0){\includegraphics[width=\unitlength,page=2]{eks88grid.pdf}}%
  \end{picture}%
\endgroup%

\end{center}
\end{multicols}
\vspace{-0.3cm}
\noindent
As $\rank \Xi=1<d$, there are infinitely many endorsed sets. These are each irreducible. Indeed, upon inspection, we find the endorsed sets
\begin{align*}
E_i=\{x\in \mathbb{N}_0^2: x_1\geq 1, x_2=x_1+(-1)^i\left \lfloor{i/2}\right \rfloor \}, \qquad i\geq 1,
\end{align*}
and the full set of absorbing states $D_A=\{x\in\mathbb{N}_0^2: x_1=0\}$. Furthermore, some sets ($E_i$ with $i\equiv1\mod 2$) have no corresponding absorbing set while all others do. For all classes however, we find
\begin{align*}
d_v(n)=\mathcal{O}(n^2), \qquad d^v(n)=\mathcal{O}(n),
\end{align*}
hence, by Theorem \ref{samlet}, we conclude that for any initial distribution $\mu$ with support contained in a finite subset of the endorsed sets, the measure $\mathbb{P}_\mu(X_t\in \cdot \,|\, t<\tau_A)$ converges exponentially fast in $t$ to $\nu_\mu$. Had we instead looked at the slightly altered network
\vspace{-0.2cm}
\begin{multicols}{2}
\begin{align*}
S_1+S_2&\overset{\alpha_1}{\to} \emptyset\\
S_1&\overset{\alpha_2}{\to} 2S_1+S_2\\
2S_1+2S_2&\overset{\alpha_3}{\to} S_1+S_2\\
3S_1+S_2&\overset{\alpha_4}{\to}  3S1
\end{align*}
\columnbreak
\vspace{0.4cm}
\begin{center}
 \def\svgwidth{129pt}
\begingroup%
  \makeatletter%
  \providecommand\color[2][]{%
    \errmessage{(Inkscape) Color is used for the text in Inkscape, but the package 'color.sty' is not loaded}%
    \renewcommand\color[2][]{}%
  }%
  \providecommand\transparent[1]{%
    \errmessage{(Inkscape) Transparency is used (non-zero) for the text in Inkscape, but the package 'transparent.sty' is not loaded}%
    \renewcommand\transparent[1]{}%
  }%
  \providecommand\rotatebox[2]{#2}%
  \ifx\svgwidth\undefined%
    \setlength{\unitlength}{116.48636904bp}%
    \ifx\svgscale\undefined%
      \relax%
    \else%
      \setlength{\unitlength}{\unitlength * \real{\svgscale}}%
    \fi%
  \else%
    \setlength{\unitlength}{\svgwidth}%
  \fi%
  \global\let\svgwidth\undefined%
  \global\let\svgscale\undefined%
  \makeatother%
  \begin{picture}(1,0.97954289)%
    \put(0,0){\includegraphics[width=\unitlength,page=1]{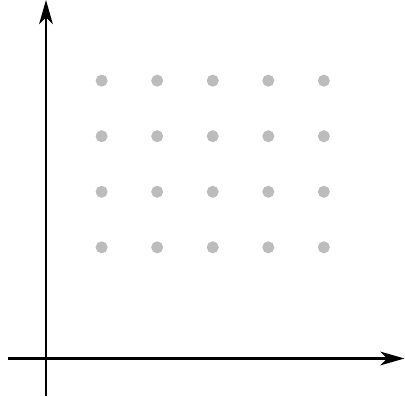}}%
    \put(-0.00356298,0.84865846){\color[rgb]{0,0,0}\makebox(0,0)[lb]{\smash{$S_2$}}}%
    \put(0.85515167,0.01422607){\color[rgb]{0,0,0}\makebox(0,0)[lb]{\smash{$S_1$}}}%
    \put(0,0){\includegraphics[width=\unitlength,page=2]{eks89grid.pdf}}%
  \end{picture}%
\endgroup%

\end{center}
\end{multicols}
\vspace{-0.3cm}
\noindent
then Assumption \ref{class} would no longer be satisfied as no minimal irreducible class exists.
\end{eks}

\section{Conclusions}
In this paper, we have provided sufficient conditions for the existence and uniqueness of a quasi-stationary distribution, within each endorsed set, for general stochastic reaction networks. In particular, we have provided sufficient conditions for the existence and uniqueness of a globally attracting quasi-stationary distribution in the space of probability distributions.

The requirement that for mass-action reaction systems there exists a stoichiometric coefficient strictly greater than one for each species is strong, however, it seems to be intrinsic to the problem of guaranteeing uniqueness of the QSD. Indeed, this can be seen already in 1-dimensional systems, and one can imagine that getting stuck near one of the axes would approximately reduce the multi-dimensional system to such a 1-dimensional case. It is an important question to determine sufficient conditions for just the existence of a QSD, in which case one would expect much weaker conditions to be satisfied. This is still an open problem.

The application of our results depends on the existence of a vector $v\in \mathbb{N}^d$, such that Assumption \ref{ass1} holds. One would like to have easy graphical ways of guaranteeing this existence or an explicit algorithmic way of constructing such a vector. This may not be possible in general, since the problem is equivalent to determining the sign of a multivariate polynomial in a certain region of the positive orthant. However, the specific network at hand is often prone to analytical ad hoc methods, and even in lack of this, one may rely on numerical methods to find a suitable candidate for $v$. Another caveat is the exact calculation of the endorsed sets. Methods for finding these numerically, in the case where the intensity functions are positive in the positive orthant, exist \cite{gham,craciun}. However, the problem becomes more complicated in the general case \cite{scale}.

Knowing the existence of a unique QSD, one would of course like to know the explicit analytical expression for this distribution on each endorsed set. However, this seems to be a very hard problem to solve in general, and even for 1-dimensional systems it has not yet been fully resolved. Thus, so far, one is forced to apply numerical methods or rely on analytical approximations, see \cite{simu,mini}.

Another problem which is important for applications, is the question of observability. Indeed, the relative sizes of the time to extinction, $\tau_{A}$, versus the time to reach the mean of the QSD, $\tau_\nu$, determines whether we are likely to observe the QSD or not. Only if $\tau_{A}\gg \tau_{\nu}$ would we expect the process to behave according to the QSD \cite{vellela}. Few recent results on this matter exist for limited scenarios \cite{scale}, while methods exploiting the WKB approximation \cite{WKB} are more general although not yet fully rigorous \cite{Chazottes}.

Finally, numerical evidence seems to suggest a strong connection between the deterministic and stochastic models of the same underlying reaction network. Indeed, for systems close to thermodynamic equilibrium, also referred to as the fluid limit \cite{kurtz}, the modes of the QSD appear to be located near the deterministic steady states. However, far from equilibrium, the picture may be radically different. Our result can be seen as a stochastic analogue to the deterministic case of having an equilibrium point within each stoichiometric compatibility class. In this light, the QSD bridges the gap between the knowledge of extinction in the stochastic description and the existence of a stationary steady state in the deterministic setting. Future work lies in analyzing what can be inferred about the QSD from the corresponding deterministic dynamical system.
\vspace{-0.15cm}
\section*{Acknowledgement}
The authors would like to thank Abhishek Pal Majumder for many valuable and interesting discussions on this topic.
\newpage
\bibliographystyle{siam}

\end{document}